\documentclass[UTK8]{ctexart}
\usepackage{authblk}
\usepackage{indentfirst}%
\usepackage{upgreek}
\usepackage{amssymb}
\usepackage{amsmath}
\usepackage{mathrsfs}
\usepackage{geometry}
\usepackage{amsthm}
\usepackage{enumerate}
\usepackage{graphicx}
\usepackage{float}
\usepackage{subfigure}
\usepackage{tikz}
\usepackage{setspace}
\usepackage{multicol}
\usepackage{caption}
\usepackage[english]{babel}
\newtheorem{proposition}{Proposition}[section]

\newtheorem{thm}{\bf Theorem~}[section]

\newtheorem{corollary}[thm]{\bf Corllary~}
\newtheorem{lemma}[thm]{\bf Lemma}
\newtheorem{remark}{\bf Remark~}[section]

\newtheorem{theoremB}{Theorem}

\usepackage{titlesec}
\titleformat{\section}
{\normalfont\Large\bfseries}{\thesection.}{0.4em}{}
\titleformat{\subsection}
{\normalfont\large\bfseries}{\thesubsection.}{0.4em}{}
\titleformat{\subsubsection}
{\normalfont\normalsize\bfseries}{\thesubsubsection.}{0.4em}{}

\begin{document}
	
	\newpage

		\title{A variational approach to periodic orbits in the  $e^{-}Z^{2+}e^{-}$ Helium}
		\date{}
		%\author[a]{Urs Frauenfelder}
		\author{Zixuan Ye\thanks{%
					Zixuan Ye\\%
					\texttt{2250501033@cnu.edu.cn}\\%
					School of Mathematical Sciences,\ Capital Normal University,\ Beijing 100048 P.R. China%
				}}
		%\affil[a]{Institute of Mathematics,\ Augsburg University}
		%\affil{School of Mathematical Sciences,\ Capital Normal University,\ Beijing 100048 P.R. China}
	\maketitle
	%Author
	
	%Abstract
	\begin{abstract}
		{In this article, we use variational approaches to describe generalized solutions $(q_1,q_2)$ and critical points $(z_1,z_2)$ of the action functional $\mathscr{B}_{av}$ for the Helium atom in the $e^{-}Z^{2+}e^{-}$ configuration with mean interaction, where $(q_1,q_2)$ and $(z_1,z_2)$ are related by a non-local Levi-Civita regularization introduced by Barutello, Ortega and Verzini. Additionally, we give the Lagrangian and the Hamiltonian formulations of the generalized solutions $(q_1,q_2)$ following the framework constructed by Cieliebak, Frauenfelder and Volkov. Finally, we count the number of periodic orbits $(z_1,z_2)\in \mathscr{C}_{\mathscr{B}_{av}}$  and find the 1-to-1 correspondence between them and positive rational numbers $\mathbb{Q}_{+}$. \rm} \\
		\noindent\textbf{Keywords:} Helium Atom, Variational Approach, Non-local Levi-Civita Regularization, Periodic Orbits 
		
	\end{abstract}
	%Content
	\tableofcontents
	\section{Introduction}
%	Describe the motion of frozen planet and $e^{-1}Z^{+2}e^{-1}$ configuration in the language of Lagrange and Hamilton both mathematically and physically.
	
%	References are \cite{barutello2021regularized}, \cite{cieliebak2022variational} and \cite{wintgen1992semiclassical}.
	The classical dynamics for the Helium atom is of mixed phase space, where regular and chaotic motions coexist. A proper semiclassical treatment of the Helium atom is highly desirable, and the periodic orbits in the Helium atom play an important role in its semiclassical treatment. \cite{tanner2000theory} and \cite{wintgen1992semiclassical} both emphasize the importance of studying periodic orbits in Helium.
	
	In this article, we will focus exclusively on near-collinear configurations. There are two near-collinear configurations in the Helium atom: the frozen planet configuration and the ${e^{-}}Z^{2+}e^{-}$ configuration.

	 In the situation of frozen planet configuration, two electrons are located on the same side of the nucleus. \cite{cieliebak2022variational} and \cite{cieliebak2023nondegeneracy} explore a variational approach to this case, \cite{frauenfelder2021helium} proposes considering only the mean interaction between the two electrons. And \cite{frauenfelder2021helium} gives a way to construct Floer homology for frozen planet orbits, as further detailed in \cite{frauenfelder2023compactness}. More recently, Baranzini, Canneori and Terracini(\cite{baranzini2025frozen}) investigate frozen planet orbits for the $n$-electron atom, they demonstrate that frozen planet orbits converge to segments of a brake orbit in a Kepler-type problem, and thereby establishing a strong analogy with Schubart orbits in the gravitational n-body problem. 
	 
	 On the other hand, it's natural to be curious about the situation when these two electrons lie on the opposite sides of the nucleus. We refer to the Helium atom in this configuration as the ${e^{-}}Z^{2+}e^{-}$ configuration. Unlike the frozen planet configuration, if the two electrons are confined to a line, this system is not integrable and fully chaotic. The left electron and the right one both undergo consecutive collisions with the nucleus, and their motions resemble free-falls, as shown below.

	 \begin{figure}[H]
	 	\centering

	 	\tikzset{every picture/.style={line width=0.75pt}} %set default line width to 0.75pt        
	 	
	 	\begin{tikzpicture}[x=0.75pt,y=0.75pt,yscale=-1,xscale=1]
	 		%uncomment if require: \path (0,310); %set diagram left start at 0, and has height of 310
	 		
	 		%Shape: Circle [id:dp24136711802427757] 
	 		\draw  [fill={rgb, 255:red, 0; green, 0; blue, 0 }  ,fill opacity=1 ] (257.99,193.88) .. controls (257.99,188.42) and (262.41,184) .. (267.87,184) .. controls (273.32,184) and (277.74,188.42) .. (277.74,193.88) .. controls (277.74,199.33) and (273.32,203.75) .. (267.87,203.75) .. controls (262.41,203.75) and (257.99,199.33) .. (257.99,193.88) -- cycle ;
	 		%Shape: Circle [id:dp4795244372346217] 
	 		\draw  [fill={rgb, 255:red, 0; green, 0; blue, 0 }  ,fill opacity=1 ] (371.52,194.18) .. controls (371.52,191.56) and (373.65,189.44) .. (376.27,189.44) .. controls (378.89,189.44) and (381.01,191.56) .. (381.01,194.18) .. controls (381.01,196.8) and (378.89,198.93) .. (376.27,198.93) .. controls (373.65,198.93) and (371.52,196.8) .. (371.52,194.18) -- cycle ;
	 		%Straight Lines [id:da5896541858011335] 
	 		\draw    (277.74,193.88) -- (438.54,194.49) ;
	 		\draw   (323.64,198.95) -- (310.94,193.27) -- (324.33,189.49) ;
	 		%Shape: Circle [id:dp43467266835702345] 
	 		\draw  [fill={rgb, 255:red, 0; green, 0; blue, 0 }  ,fill opacity=1 ] (119.18,194.86) .. controls (119.18,192.45) and (121.13,190.49) .. (123.55,190.49) .. controls (125.96,190.49) and (127.92,192.45) .. (127.92,194.86) .. controls (127.92,197.27) and (125.96,199.23) .. (123.55,199.23) .. controls (121.13,199.23) and (119.18,197.27) .. (119.18,194.86) -- cycle ;
	 		%Straight Lines [id:da4679607743033958] 
	 		\draw    (47.04,194.49) -- (257.99,193.88) ;
	 		\draw   (185.92,189) -- (199,193.74) -- (185.92,198.49) ;
	 		\draw   (94.64,199.95) -- (81.94,194.27) -- (95.33,190.49) ;
	 		\draw   (395.92,190) -- (409,194.74) -- (395.92,199.49) ;

	 		% Text Node
	 		\draw (115,204) node [anchor=north west][inner sep=0.75pt]   [align=left] {$\displaystyle e^{-}$};
	 		% Text Node
	 		\draw (369,204) node [anchor=north west][inner sep=0.75pt]   [align=left] {$\displaystyle e^{-}$};
	 		% Text Node
	 		\draw (259,207) node [anchor=north west][inner sep=0.75pt]   [align=left] {$\displaystyle Z^{2+}$};

	 	\end{tikzpicture}

	 \end{figure}
 
    In \cite{wintgen1992semiclassical}, Wintgen, Richter and Tanner use a numerical method to detect the collinear motion of the Helium atom, finding that the periodic orbits obey a binary coding, and they characterize the motion on the potential surface mostly. \cite{zhao2023shooting} uses shooting method to find periodic orbits both in the frozen planet case and the ${e^{-}}Z^{2+}e^{-}$ case.
    
	In this article, we first study the generalized solutions $(q_1,q_2)$ in the ${e^{-}}Z^{2+}e^{-}$ configuration with mean interaction from the perspective of variational methods. We use a non-local regularization established in \cite{barutello2021regularized} by Barutello, Ortega and Verzini, and there is a torus action that naturally arises from this reparameterization. After regularizing and modding out this torus action, we study the action functional $\mathscr{B}_{av}$ and its critical points  $(z_1,z_2)$, i.e. $\mathscr{C}_{\mathscr{B}_{av}}$. 
	
	Using the non-local regularization in \cite{barutello2021regularized}, we first get a 2-to-1 surjective map \begin{equation*}
		\begin{aligned}
			\Phi_{LC}:& \left\{ \left. z\in {{C}^{0}}\left( {{S}^{1}},\mathbb{R} \right) \right|z\text{ has precisely }m\text{ zeros and switches sign at each zero} \right\} \\ 
			& \to \left\{ \left. q\in {{C}^{0}}\left( {{S}^{1}},{{\mathbb{R}}_{\ge 0}} \right) \right|q\text{ has precisely }m\text{ zeros and }\int_{0}^{1}{\frac{ds}{q\left( s \right)}<\infty } \right\}, \\ 
		\end{aligned}
	\end{equation*}
here $m\in 2\mathbb{N}$.

 Then we prove a 4-to-1 relationship between critical points $(z_1,z_2)$ of the action functional $\mathscr{B}_{av}$ and generalized solutions $(q_1,q_2)$ of the mean interaction equation. More precisely:

	\begin{theoremB}\label{Main1}
		Under the Levi-Civita transformations $q_i(t)=z_i{(\tau_i(t))}^2(i=1,2)$ with time change $\tau_i(t)$ satisfying $\tau_i(0)=0$ and $\frac{dt}{q_i(t)}=\frac{d\tau_i(t)}{\parallel z_i\parallel ^2}$, critical points $(z_1,z_2)$ of the action functional $\mathscr{B}_{av}$ are in 4-to-1 correspondence with generalized solutions $(q_1,q_2)$ of equation \begin{equation*}
			\begin{aligned}
				&\ddot{q_1}=-\frac{2}{\left(q_1\right)^2}+\frac{1}{ \left(\bar{q}_{1}+\bar{q}_{2}\right)^2},\\
				&\ddot{q_2}=-\frac{2}{\left(q_2\right)^2}+\frac{1}{ \left(\bar{q}_{1}+\bar{q}_{2}\right)^2}.
			\end{aligned}
		\end{equation*}.
	\end{theoremB}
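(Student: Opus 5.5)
We follow the scheme of \cite{barutello2021regularized} and \cite{cieliebak2022variational}, applied separately to each electron. On loops $z_i\in H^1(S^1,\mathbb{R})$, $z_i\neq 0$, the functional has the form
\[
\mathscr{B}_{av}(z_1,z_2)=\sum_{i=1}^{2}\Bigl(2\|z_i\|^2\!\int_0^1\dot z_i^2\,d\tau+\frac{2}{\|z_i\|^2}\Bigr)-\frac{\text{(mean interaction term)}}{\cdots},
\]
where the interaction term involves only the means $\bar q_i=\|z_i\|^2\bigl(\int_0^1 d\tau/z_i^2\bigr)^{-1}$, or equivalently $\int_0^1 z_i^2\,d\tau'$ with respect to the physical time. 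The factor in front of the kinetic term comes from the time change $dt/q_i=d\tau_i/\|z_i\|^2$.

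First we compute the first variation of $\mathscr{B}_{av}$ in the direction $(\xi_1,0)$. Because the interaction depends on $z_1$ only through the scalar $\bar q_1$, the Euler--Lagrange equation for $z_1$ is an ODE of the form
\[
\ddot z_1=a_1 z_1+b_1 z_1^3,
\]
with constants $a_1,b_1$ depending on $\|z_1\|$, $\int\dot z_1^2$ and $\bar q_1+\bar q_2$. Integrating this against $z_1$ and $\dot z_1$ yields two integrals of motion that fix these constants in terms of an energy. The equation for $z_2$ has the same structure.

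Next we apply $\Phi_{LC}$ to each component: set $q_i(t)=z_i(\tau_i(t))^2$ with $\tau_i(0)=0$. The chain rule gives $\dot q_i=2z_i z_i'\,\|z_i\|^2/q_i$, and differentiating once more, using the ODE together with the energy identity, we aim to show
\[
\ddot q_i=-\frac{2}{q_i^2}+\frac{1}{(\bar q_1+\bar q_2)^2}
\]
away from the zeros of $z_i$. The constant in front of $-2/q_i^2$ is fixed by the normalisation built into the $2/\|z_i\|^2$ term. At zeros of $z_i$ the function $z_i$ is smooth and changes sign, so $q_i$ satisfies the collision conditions in the definition of a generalized solution: energy conservation across the collision and $\int dt/q_i<\infty$. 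The converse direction runs the same computations backwards. Given a generalized solution, define $\tau_i$ by integrating $\|z_i\|^2/q_i$, normalising by $\int_0^1 dt/q_i$, and take the square root $z_i$ that switches sign at each zero. The result is $H^1$ and solves the Euler--Lagrange equations.

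For the count, the surjection $\Phi_{LC}$ is 2-to-1 on each factor because $z_i$ and $-z_i$ give the same $q_i$. The choice of sign is independent for $i=1,2$, which gives the factor $2\cdot 2=4$. The condition $\tau_i(0)=0$ removes the torus action of time shifts in each $\tau_i$ independently, so no further identifications arise.

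We expect the main obstacle to be the rigorous handling of collisions. One must show that a critical point $z_i$ has only transverse zeros, so that $\Phi_{LC}$ applies with sign switching, and that the Lagrange multipliers coming from the non-local terms ($\|z_i\|^2$ and $\bar q_i$) combine to give exactly the coefficients $2$ and $1$. We would first establish transversality through uniqueness for the ODE: $z_i(\tau)=\dot z_i(\tau)=0$ forces $z_i\equiv0$, which is excluded. We would then verify the coefficients by evaluating the energy identity at a zero of $z_i$.
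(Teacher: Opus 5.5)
You have the paper's overall scheme: the first variation gives $z_i''=a_iz_i+b_iz_i^3$, zeros are transverse by ODE uniqueness, you apply the Levi-Civita transform, and the factor $4$ comes from the sign of each $z_i$. Using the first integral $\mathcal{E}_i=\frac12 (z_i')^2-\frac{a_i}{2}z_i^2-\frac{b_i}{4}z_i^4$ of the $z_i$-equation is a legitimate shortcut for the paper's $\beta_i$ argument. Off the collisions it gives $\ddot q_i=-4\|z_i\|^4\mathcal{E}_i/q_i^2+\|z_i\|^4b_i$.

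\textbf{The gap.} It sits in the step you yourself flag as the crux, pinning the coefficient to $2$. Evaluating the first integral at a zero $\tau_0$ of $z_i$ only yields $\mu_i:=4\|z_i\|^4\mathcal{E}_i=2\|z_i\|^4z_i'(\tau_0)^2$. That ties $\mu_i$ to the unknown collision speed and does not determine it. The missing idea is to \emph{average} the first integral over the loop and insert the explicit constants from the first variation:
\[
\mathcal{E}_i=\tfrac12\|z_i'\|^2-\tfrac{a_i}{2}\|z_i\|^2-\tfrac{b_i}{4}\|z_i^2\|^2 .
\]
Here the $\|z_i'\|^2$ terms and the interaction terms cancel. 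Only the $-1/\|z_i\|^6$ in $a_i$, which comes from $2/\|z_i\|^2$, survives, so $\mathcal{E}_i=1/(2\|z_i\|^4)$ and $\mu_i=2$. The paper does the same thing on the $q$ side by integrating $\mu_i/q_i$ over $[0,1]$, which is allowed because $\int_0^1dt/q_i=1/\|z_i\|^2<\infty$.

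The converse needs the same averaging. A generalized solution transforms to
\[
z_i''=\frac{E_i}{2\|z_i\|^4}z_i+\frac{1}{\|z_i\|^4(\bar q_1+\bar q_2)^2}z_i^3 .
\]
Only after averaging $E_i$ over $[0,1]$ do you see that the linear coefficient is exactly $a_i$. Saying you will ``run the computations backwards'' hides this step. You also need $\int_0^1dt/q_i<\infty$ before $\tau_i$ can be defined. The paper gets it by integrating the constant energy and using $q_i\in H^1$.

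\textbf{Two further corrections.}

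First, your formula $\bar q_i=\|z_i\|^2(\int_0^1d\tau/z_i^2)^{-1}$ is wrong, because $\int_0^1d\tau/z_i^2$ diverges at a transverse zero. You are probably conflating it with $\int_0^1dt/q_i=1/\|z_i\|^2$. The correct formula is $\bar q_i=\|z_i^2\|^2/\|z_i\|^2$, and your argument depends on it in two places:
\begin{itemize}
\item it is what produces the cubic term $b_iz_i^3$ in the Euler--Lagrange equation;
\item it gives $\|z_i\|^4b_i=(\bar q_1+\bar q_2)^{-2}$, which is why the interaction coefficient comes out as exactly $1$.
\end{itemize}

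Second, the normalization $\tau_i(0)=0$ does not remove the time-shift action. It only makes $\Phi_{LC}$ well defined and equivariant, as in the paper's torus lemma. The count $4$ comes solely from the independent sign choice in each factor, via the 2-to-1 property of $\Phi_{LC}$, exactly as in the paper.
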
 
	 
	 For any two relatively prime positive integers $n_1$ and $n_2$, we can choose $\sigma_i=\frac{1}{2n_i}(i=1,2)$. Given any $\sigma_{1}>0$, $\sigma_{2}>0$, we employ analytic techniques to obtain a unique solution $(q_1^{\sigma_{1}},q_2^{\sigma_{2}})$ for the mean interaction equation of Helium, where $\sigma_{i}$ are the period of $q_i$, $i=1,2$. Then we extend $(q_1^{\sigma_{1}},q_2^{\sigma_{2}})$ to solutions of period 1, i.e. critical points $(z_1,z_2)$ of the action functional $\mathscr{B}_{av}$. One can show that $\frac{\sigma_{1}}{\sigma_{2}}$ is a strictly monotonicity increasing bijection from $\mathbb{R}_{\ge 0}$ to $\mathbb{R}_{\ge 0}$ through analytic computations. Based on Theorem \ref{Main1}, we get a one-to-one correspondence between $(z_1,z_2)\in\mathscr{C}_{\mathscr{B}_{av}}$ and the set of positive rational numbers $\mathbb{Q}_{+}$, which constitutes the main result of this paper. 
	 
	 \begin{theoremB}\label{Main2}
	 		There is a one-to-one correspondence between $\mathbb{Q}_{+}$ and $\mathscr{C}_{\mathscr{B}_{av}}/{\mathbb{Z}}/{2\mathbb{Z}}\;\times {\mathbb{Z}}/{2\mathbb{Z}}\;$, in the sense that $z_i=-z_i(i=1,2)$ under the  ${\mathbb{Z}}/{2\mathbb{Z}}\;$ action.
	 \end{theoremB}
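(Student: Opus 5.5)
We reduce Theorem \ref{Main2} to a classification of the generalized solutions $(q_1,q_2)$ of the mean interaction equation, using Theorem \ref{Main1}. Here $\bar q_i=\int_0^1 q_i\,dt$ is the mean, so the interaction term $c:=(\bar q_1+\bar q_2)^{-2}$ is a constant along a solution. The steps are as follows.

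\textbf{Step 1: decoupling.} Each equation becomes a one-dimensional autonomous problem
\begin{equation*}
\ddot q_i=-\frac{2}{q_i^2}+c .
\end{equation*}
Its generalized (collision–bounce) solutions are free-fall orbits. Each starts at a collision $q_i=0$, rises to a maximal height, and falls back. Fix $c>0$ and the energy. The energy relation $\frac12\dot q_i^2+\frac{2}{q_i}-cq_i=E_i$ holds, with $E_i\to+\infty$ at collisions. The period $\sigma_i$ between consecutive collisions is then a strictly monotone function of the turning height. A 1-periodic $q_i$ must have period $\sigma_i=1/m_i$ for some integer $m_i\ge1$.

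\textbf{Step 2: self-consistency and scaling.} The pair $(q_1,q_2)$ must satisfy $c=(\bar q_1+\bar q_2)^{-2}$. I would use the scaling symmetry of the Coulomb problem with a constant field. If $q$ solves $\ddot q=-2/q^2+c$, then $\lambda^{2}q(\lambda^{-3}t)$ solves the same type of equation with $c$ replaced by $\lambda^{-4}c$. Thus each orbit is determined by one dimensionless parameter, for instance $\kappa_i=c\,h_i^2$ with $h_i$ the turning height. Both the period $\sigma_i$ and the mean $\bar q_i$ are explicit in $h_i$ and $c$ through elementary integrals.

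Prescribe $(\sigma_1,\sigma_2)$ and impose the consistency equation. I would show that there is exactly one solution $(q_1^{\sigma_1},q_2^{\sigma_2})$, up to time shift. This is the uniqueness claim quoted before the theorem. I would then compute that the ratio $\sigma_1/\sigma_2$ depends strictly monotonically on the ratio of the heights, $h_1/h_2$, and is a bijection onto $(0,\infty)$.

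\textbf{Step 3: counting.} A 1-periodic solution has periods $\sigma_i=1/m_i$. For the corresponding critical point $(z_1,z_2)$, the number of zeros of $z_i$ is even by the description of $\Phi_{LC}$. This gives $\sigma_i=1/(2n_i)$. The solution with these periods is the one with $\sigma_1/\sigma_2=n_2/n_1$.

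When $\gcd(n_1,n_2)=d>1$, the solution is a $d$-fold iterate of the one with periods $(1/(2n_1/d),\,1/(2n_2/d))$, rescaled to period $1$. Taking $\mathscr{C}_{\mathscr{B}_{av}}$ modulo reparametrization and this identification, critical points correspond to $(n_1,n_2)$ coprime, that is, to $n_2/n_1\in\mathbb{Q}_+$.

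Finally, Theorem \ref{Main1} gives a 4-to-1 map from critical points to solutions. Its fibres are exactly the sign changes $z_i\mapsto -z_i$, independently for $i=1,2$, which is the $\mathbb{Z}/2\mathbb{Z}\times\mathbb{Z}/2\mathbb{Z}$ quotient in the statement. The $\mathbb{Z}$ quotient accounts for time translations and the iterate identification.

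\textbf{Main obstacle.} I expect the hardest part to be the strict monotonicity and surjectivity of $\sigma_1/\sigma_2$ in Step 2. This requires showing that the period integral and the mean integral give a monotone combined function under the self-consistency constraint. My first attempt would be to write both integrals as functions of the single parameter $\kappa$ after scaling. I would then show that the relevant logarithmic derivative has a fixed sign, checking the limits $\kappa\to0$ (the pure Kepler problem) and $\kappa\to\kappa_{\max}$ (the turning point approaching the equilibrium $q=\sqrt{2/c}$) to obtain the full range $(0,\infty)$.
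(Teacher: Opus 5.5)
Your outline has the same architecture as the paper's proof: constant interaction $m=(\bar q_1+\bar q_2)^{-2}$, each free fall reduced to the scale-invariant parameter $k=mq_0^2/2$, the period ratio $\Psi(r)=\sigma_1/\sigma_2$ as a function of the height ratio, and Theorem~\ref{Main1} for the fibres. However, the step that actually carries the theorem is only asserted, with no mechanism: that for every prescribed $(\sigma_1,\sigma_2)$ the self-consistent problem has exactly one solution. This is what produces an orbit for each rational. It is also what guarantees that two critical points with the same collision counts coincide.

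The missing idea is the paper's Lemma~\ref{Lemma 4.1}. For fixed $m$ and $\sigma$ the free fall $q_{m,\sigma}\in V_\sigma$ is unique. Moreover $f_\sigma(m)=\sqrt m\,\bar q_{m,\sigma}=\sqrt{2k}\,h(k)$ is strictly increasing from $0$ to $\sqrt2$, since $h=g/f$ is increasing, $k$ increases with $m$, and $h(k)\to1$ as $k\to1$. Hence $f_{\sigma_1}(m)+f_{\sigma_2}(m)=1$ has exactly one root.

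By contrast, the step you single out as the main obstacle is not needed for the count. Once existence and uniqueness for prescribed periods are known, the correspondence is simply $n_2/n_1\leftrightarrow(\sigma_1,\sigma_2)$ on reduced fractions, and $\Psi$ never enters. Monotonicity of $\Psi$ is then a by-product. Self-consistent solutions modulo scaling are parametrized by $r$ through the unique root $k=\kappa(r)$ of $K(r,\cdot)$, so Lemma~\ref{Lemma 4.1} makes $\Psi$ a continuous bijection. If you instead want to extract uniqueness from $\Psi$, you need two more things: a proof of that unique solvability of $K(r,k)=0$, and a genuine sign argument for $\Psi'/\Psi$. The endpoint limits only give the range of $\Psi$, not the sign of its derivative in between.

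On the counting, the quotient in the statement is only by $\mathbb{Z}/2\mathbb{Z}\times\mathbb{Z}/2\mathbb{Z}$ acting by $z_i\mapsto-z_i$. Time shifts are already divided out in $\mathscr{H}^1_{av}=\tilde{\mathscr{H}}^1_{av}/\mathbb{T}^2$, and identifying iterates is not a $\mathbb{Z}$-action. So there is no extra \emph{$\mathbb{Z}$-quotient} to absorb them. Discarding the rescaled $d$-fold iterates, which your scaling symmetry does justify, is a restriction to primitive orbits. The paper makes the same restriction silently by imposing $\gcd(n_1,n_2)=1$; you should state it as a restriction rather than attribute it to the notation.

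Your normalization also differs from the paper's. There $\sigma$ is the apex-to-collision time in $V_\sigma$, so $\sigma_i=1/(2n_i)$ means $n_i$ collisions per unit time, which is odd for at least one electron when $\gcd(n_1,n_2)=1$. Your collision period $1/(2n_i)$ gives $2n_i$ zeros, which is what the untwisted space requires. Your orbits are therefore the rescaled $2$-fold iterates of the paper's, and to invoke Lemma~\ref{Lemma 4.1} you must take $\sigma_i=1/(4n_i)$ in its convention.

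Finally, the conserved energy is $\frac12\dot q_i^2-\frac{2}{q_i}-cq_i$, not the expression with $+\frac{2}{q_i}$. It is the continuity of this energy across collisions, not a blow-up, that forces all bounces of $q_i$ to have the same height and duration. That is what justifies your claim that a $1$-periodic $q_i$ has collision period $1/m_i$; the claim is correct, and the paper leaves it only implicit.
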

	 
	 This theorem shows that $\forall r=\frac{n_2}{n_1}\in \mathbb{Q}_{+}$, $gcd(n_1,n_2)=1$, we can get a unique periodic orbit $(z_1,z_2)$. And in this situation,the theorem states that the left electron and the right one collide with the nucleus $n_2$ and $n_1$ times in period 1 separately. This one-to-one correspondence shows us the motion of the uncoupled system  \begin{equation*}
	 	\begin{aligned}
	 		& {{z}''_{1}}={{a}_{1}}{{z}_{1}}+{{b}_{1}}z_{1}^{3},\\ 
	 		& {{z}''_{2}}={{a}_{2}}{{z}_{2}}+{{b}_{2}}z_{2}^{3},\\ 
	 	\end{aligned} 
	 \end{equation*}  as following.
	 	\begin{figure}[H]
	 		\centering
	 		
	 		\tikzset{every picture/.style={line width=0.75pt}} %set default line width to 0.75pt        
	 		
	 		\begin{tikzpicture}[x=0.75pt,y=0.75pt,yscale=-1,xscale=1]
	 			%uncomment if require: \path (0,300); %set diagram left start at 0, and has height of 300
	 			
	 			%Straight Lines [id:da8622703913324349] 
	 			\draw    (66.41,82.22) -- (360.41,82.22) ;
	 			%Shape: Circle [id:dp913754525460246] 
	 			\draw  [fill={rgb, 255:red, 0; green, 0; blue, 0 }  ,fill opacity=1 ] (201.91,82.22) .. controls (201.91,75.87) and (207.06,70.72) .. (213.41,70.72) .. controls (219.76,70.72) and (224.91,75.87) .. (224.91,82.22) .. controls (224.91,88.57) and (219.76,93.72) .. (213.41,93.72) .. controls (207.06,93.72) and (201.91,88.57) .. (201.91,82.22) -- cycle ;
	 			%Shape: Circle [id:dp21263899434143052] 
	 			\draw  [fill={rgb, 255:red, 0; green, 0; blue, 0 }  ,fill opacity=1 ] (58.66,82.22) .. controls (58.66,77.94) and (62.13,74.47) .. (66.41,74.47) .. controls (70.69,74.47) and (74.16,77.94) .. (74.16,82.22) .. controls (74.16,86.5) and (70.69,89.97) .. (66.41,89.97) .. controls (62.13,89.97) and (58.66,86.5) .. (58.66,82.22) -- cycle ;
	 			%Shape: Circle [id:dp2627749488418015] 
	 			\draw  [fill={rgb, 255:red, 0; green, 0; blue, 0 }  ,fill opacity=1 ] (344.91,82.22) .. controls (344.91,77.94) and (348.38,74.47) .. (352.66,74.47) .. controls (356.94,74.47) and (360.41,77.94) .. (360.41,82.22) .. controls (360.41,86.5) and (356.94,89.97) .. (352.66,89.97) .. controls (348.38,89.97) and (344.91,86.5) .. (344.91,82.22) -- cycle ;
	 			%Shape: Circle [id:dp14087843301856906] 
	 			\draw  [color={rgb, 255:red, 208; green, 2; blue, 27 }  ,draw opacity=1 ][dash pattern={on 3.75pt off 3pt on 7.5pt off 1.5pt}] (186.41,82.22) .. controls (186.41,77.94) and (189.88,74.47) .. (194.16,74.47) .. controls (198.44,74.47) and (201.91,77.94) .. (201.91,82.22) .. controls (201.91,86.5) and (198.44,89.97) .. (194.16,89.97) .. controls (189.88,89.97) and (186.41,86.5) .. (186.41,82.22) -- cycle ;
	 			%Shape: Circle [id:dp15020063643220927] 
	 			\draw  [color={rgb, 255:red, 208; green, 2; blue, 27 }  ,draw opacity=1 ][dash pattern={on 3.75pt off 3pt on 7.5pt off 1.5pt}] (250.91,82.22) .. controls (250.91,77.94) and (254.38,74.47) .. (258.66,74.47) .. controls (262.94,74.47) and (266.41,77.94) .. (266.41,82.22) .. controls (266.41,86.5) and (262.94,89.97) .. (258.66,89.97) .. controls (254.38,89.97) and (250.91,86.5) .. (250.91,82.22) -- cycle ;
	 			%Straight Lines [id:da23862712805495367] 
	 			\draw    (66.41,175.22) -- (360.41,175.22) ;
	 			%Shape: Circle [id:dp11776978052386589] 
	 			\draw  [fill={rgb, 255:red, 0; green, 0; blue, 0 }  ,fill opacity=1 ] (201.91,175.22) .. controls (201.91,168.87) and (207.06,163.72) .. (213.41,163.72) .. controls (219.76,163.72) and (224.91,168.87) .. (224.91,175.22) .. controls (224.91,181.57) and (219.76,186.72) .. (213.41,186.72) .. controls (207.06,186.72) and (201.91,181.57) .. (201.91,175.22) -- cycle ;
	 			%Shape: Circle [id:dp5712218653431705] 
	 			\draw  [fill={rgb, 255:red, 0; green, 0; blue, 0 }  ,fill opacity=1 ] (58.66,175.22) .. controls (58.66,170.94) and (62.13,167.47) .. (66.41,167.47) .. controls (70.69,167.47) and (74.16,170.94) .. (74.16,175.22) .. controls (74.16,179.5) and (70.69,182.97) .. (66.41,182.97) .. controls (62.13,182.97) and (58.66,179.5) .. (58.66,175.22) -- cycle ;
	 			%Shape: Circle [id:dp6663069941603319] 
	 			\draw  [fill={rgb, 255:red, 0; green, 0; blue, 0 }  ,fill opacity=1 ] (344.91,175.22) .. controls (344.91,170.94) and (348.38,167.47) .. (352.66,167.47) .. controls (356.94,167.47) and (360.41,170.94) .. (360.41,175.22) .. controls (360.41,179.5) and (356.94,182.97) .. (352.66,182.97) .. controls (348.38,182.97) and (344.91,179.5) .. (344.91,175.22) -- cycle ;
	 			%Shape: Circle [id:dp015603782680160938] 
	 			\draw  [color={rgb, 255:red, 208; green, 2; blue, 27 }  ,draw opacity=1 ][dash pattern={on 3.75pt off 3pt on 7.5pt off 1.5pt}] (157.41,176.22) .. controls (157.41,171.94) and (160.88,168.47) .. (165.16,168.47) .. controls (169.44,168.47) and (172.91,171.94) .. (172.91,176.22) .. controls (172.91,180.5) and (169.44,183.97) .. (165.16,183.97) .. controls (160.88,183.97) and (157.41,180.5) .. (157.41,176.22) -- cycle ;
	 			%Shape: Circle [id:dp23039413915686469] 
	 			\draw  [color={rgb, 255:red, 208; green, 2; blue, 27 }  ,draw opacity=1 ][dash pattern={on 3.75pt off 3pt on 7.5pt off 1.5pt}] (224.91,175.22) .. controls (224.91,170.94) and (228.38,167.47) .. (232.66,167.47) .. controls (236.94,167.47) and (240.41,170.94) .. (240.41,175.22) .. controls (240.41,179.5) and (236.94,182.97) .. (232.66,182.97) .. controls (228.38,182.97) and (224.91,179.5) .. (224.91,175.22) -- cycle ;
	 			%Straight Lines [id:da8775176650443864] 
	 			\draw [color={rgb, 255:red, 245; green, 166; blue, 35 }  ,draw opacity=1 ] [dash pattern={on 4.5pt off 4.5pt}]  (68.41,74.47) -- (192.16,74.47) ;
	 			\draw [shift={(194.16,74.47)}, rotate = 180] [fill={rgb, 255:red, 245; green, 166; blue, 35 }  ,fill opacity=1 ][line width=0.08]  [draw opacity=0] (12,-3) -- (0,0) -- (12,3) -- cycle    ;
	 			\draw [shift={(66.41,74.47)}, rotate = 0] [fill={rgb, 255:red, 245; green, 166; blue, 35 }  ,fill opacity=1 ][line width=0.08]  [draw opacity=0] (12,-3) -- (0,0) -- (12,3) -- cycle    ;
	 			%Straight Lines [id:da3684632484181748] 
	 			\draw [color={rgb, 255:red, 245; green, 166; blue, 35 }  ,draw opacity=1 ] [dash pattern={on 4.5pt off 4.5pt}]  (226.91,167.47) -- (350.66,167.47) ;
	 			\draw [shift={(352.66,167.47)}, rotate = 180] [fill={rgb, 255:red, 245; green, 166; blue, 35 }  ,fill opacity=1 ][line width=0.08]  [draw opacity=0] (12,-3) -- (0,0) -- (12,3) -- cycle    ;
	 			\draw [shift={(224.91,167.47)}, rotate = 0] [fill={rgb, 255:red, 245; green, 166; blue, 35 }  ,fill opacity=1 ][line width=0.08]  [draw opacity=0] (12,-3) -- (0,0) -- (12,3) -- cycle    ;
	 			%Straight Lines [id:da6038981732148653] 
	 			\draw [color={rgb, 255:red, 74; green, 144; blue, 226 }  ,draw opacity=1 ]   (263,63) -- (296.41,63.21) ;
	 			\draw [shift={(298.41,63.22)}, rotate = 180.35] [color={rgb, 255:red, 74; green, 144; blue, 226 }  ,draw opacity=1 ][line width=0.75]    (10.93,-3.29) .. controls (6.95,-1.4) and (3.31,-0.3) .. (0,0) .. controls (3.31,0.3) and (6.95,1.4) .. (10.93,3.29)   ;
	 			%Straight Lines [id:da43996729693951764] 
	 			\draw [color={rgb, 255:red, 74; green, 144; blue, 226 }  ,draw opacity=1 ]   (226.59,67.79) -- (260,68) ;
	 			\draw [shift={(224.59,67.78)}, rotate = 0.35] [color={rgb, 255:red, 74; green, 144; blue, 226 }  ,draw opacity=1 ][line width=0.75]    (10.93,-3.29) .. controls (6.95,-1.4) and (3.31,-0.3) .. (0,0) .. controls (3.31,0.3) and (6.95,1.4) .. (10.93,3.29)   ;
	 			%Straight Lines [id:da25678860946401527] 
	 			\draw [color={rgb, 255:red, 74; green, 144; blue, 226 }  ,draw opacity=1 ]   (131.59,152.79) -- (165,153) ;
	 			\draw [shift={(129.59,152.78)}, rotate = 0.35] [color={rgb, 255:red, 74; green, 144; blue, 226 }  ,draw opacity=1 ][line width=0.75]    (10.93,-3.29) .. controls (6.95,-1.4) and (3.31,-0.3) .. (0,0) .. controls (3.31,0.3) and (6.95,1.4) .. (10.93,3.29)   ;
	 			%Straight Lines [id:da5987051680485992] 
	 			\draw [color={rgb, 255:red, 74; green, 144; blue, 226 }  ,draw opacity=1 ]   (159,161) -- (192.41,161.21) ;
	 			\draw [shift={(194.41,161.22)}, rotate = 180.35] [color={rgb, 255:red, 74; green, 144; blue, 226 }  ,draw opacity=1 ][line width=0.75]    (10.93,-3.29) .. controls (6.95,-1.4) and (3.31,-0.3) .. (0,0) .. controls (3.31,0.3) and (6.95,1.4) .. (10.93,3.29)   ;
	 			
	 			% Text Node
	 			\draw (47,40) node [anchor=north west][inner sep=0.75pt]   [align=left] {Bounce back and force $\displaystyle n_{1}$ times};
	 			% Text Node
	 			\draw (185,135) node [anchor=north west][inner sep=0.75pt]   [align=left] {Bounce back and force $\displaystyle n_{2}$ times};
	 			% Text Node
	 			\draw (24,96) node [anchor=north west][inner sep=0.75pt]   [align=left] {the left electron};
	 			% Text Node
	 			\draw (23,194) node [anchor=north west][inner sep=0.75pt]   [align=left] {the left electron};
	 			% Text Node
	 			\draw (310,97) node [anchor=north west][inner sep=0.75pt]   [align=left] {the right electron};
	 			% Text Node
	 			\draw (307,191) node [anchor=north west][inner sep=0.75pt]   [align=left] {the right electron};
	 			% Text Node
	 			\draw (190,96) node [anchor=north west][inner sep=0.75pt]   [align=left] {nucleus};
	 			% Text Node
	 			\draw (187,192) node [anchor=north west][inner sep=0.75pt]   [align=left] {nucleus};

	 		\end{tikzpicture}
	 		
	 	\end{figure}
 
	 Based on the results presented in this paper, one can pursue further in several possibilities. For example, we can investigate the stability of periodic orbits for ${e^{-}}Z^{2+}e^{-}$ Helium atom on the plane with mean interaction. We can also study similar periodic orbits for the $n$-electron atom, where multiple electrons positioned on the opposite sides of the nucleus. Moreover, we can attempt to construct the Morse homology and the equivariant Morse homology for the  ${e^{-}}Z^{2+}e^{-}$  mean Helium atom, and subsequently explore the spectrum of the mean ${e^{-}}Z^{2+}e^{-}$ Helium. We leave them for the future.

	\section{Barutello-Ortega-Verzini regularization}\label{section2}
	
	In \cite{barutello2021regularized}, Barutello, Ortega and Verzini present a non-local regularization based on the Levi-Civita map and the Kustaanheimo-Stiefel map. They use this method to study the perturbed Kepler problem. Here, we briefly describe this method for the Kepler problem and state some useful results from \cite{cieliebak2022variational}.
	
	First, we give some conventions. We abbreviate the circle $S^1$ by $\mathbb{R}/\mathbb{Z}$. For any $z_1,z_2\in L^2(S^1,\mathbb{R})$, $$\left\langle z_1,z_2\right\rangle:=\int_{0}^{1}z_1(\tau)z_2(\tau)d\tau$$ is the $L^2$-inner product, and the $L^2$-norm of $z\in L^2(S^1,\mathbb{R})$ is denoted by $$\left\| z \right\|:=\sqrt{\left\langle z,z \right\rangle }.$$
	
	Consider two maps $q: S^1\to \mathbb{R}_{\ge 0}$, $z: S^1\to \mathbb{R}$ related by the Levi-Civita transformation %$\Phi_{LC}:\mathbb{C}\to \mathbb{C}$, 
	\begin{equation}\label{Levi-Civita transformation1}
	z\mapsto q(t)=(z(\tau))^2,
	\end{equation}
     where the time change $\tau(t)$ satisfies $\tau(0)=0$ and
     \begin{equation}\label{time change1}
     \frac{dt}{q(t)}=\frac{d\tau}{{\left\| z \right\|}^2}.
     \end{equation} 
	
	Then, we have the following calculations: the mean value of $q$ is $$\bar{q}:=\int_{0}^{1}{q\left( t \right)dt=\int_{0}^{1}{\frac{{{\left( z\left( \tau  \right) \right)}^{4}}}{{{\left\| z \right\|}^{2}}}d\tau =\frac{{{\left\| {{z}^{2}} \right\|}^{2}}}{{{\left\| z \right\|}^{2}}}}},$$  the integral $\int_{0}^{1}{\frac{dt}{q\left( t \right)}}$ is $$\int_{0}^{1}{\frac{dt}{q\left( t \right)}=\int_{0}^{1}{\frac{d\tau }{{{\left\| z \right\|}^{2}}}=\frac{1}{{{\left\| z \right\|}^{2}}}}},$$ the first derivative of $q$ is $$\dot{q}\left( t \right)=2z\left( \tau  \right){z}'\left( \tau  \right)\frac{d\tau }{dt}=\frac{2{{\left\| z \right\|}^{2}}{z}'\left( \tau  \right)}{z\left( \tau  \right)},$$  the second derivative of $q$ is $$\ddot{q}\left( t \right)=\frac{2{{\left\| z \right\|}^{2}}\left( {z}''\left( \tau  \right)z\left( \tau  \right)-{{\left( {z}'\left( \tau  \right) \right)}^{2}} \right)}{{{\left( z\left( \tau  \right) \right)}^{2}}}\frac{d\tau }{dt}=\frac{2{{\left\| z \right\|}^{4}}\left( {z}''\left( \tau  \right)z\left( \tau  \right)-{{\left( {z}'\left( \tau  \right) \right)}^{2}} \right)}{{{\left( z\left( \tau  \right) \right)}^{4}}}=\frac{1}{q\left( t \right)}\left( \frac{2{{\left\| z \right\|}^{4}}{z}''\left( \tau  \right)}{z\left( \tau  \right)}-\frac{{{\left( \dot{q}\left( t \right) \right)}^{2}}}{2} \right),$$ and the $L^2$-norm of $\dot{q}$ is $${{\left\| {\dot{q}} \right\|}^{2}}=\int_{0}^{1}{{{\left( \dot{q}\left( t \right) \right)}^{2}}dt=\int_{0}^{1}{\frac{4{{\left\| z \right\|}^{4}}{{\left( {z}'\left( \tau  \right) \right)}^{2}}}{{{\left( z\left( \tau  \right) \right)}^{2}}}\frac{{{\left( z\left( \tau  \right) \right)}^{2}}}{{{\left\| z \right\|}^{2}}}d\tau =4}}{{\left\| z \right\|}^{2}}{{\left\| {{z}'} \right\|}^{2}}.$$
	
	If we are given $z\in C^0(S^1, \mathbb{R})$ be a continuous function, $Z_z:=z^{-1}(0)$ is its zero set and it is finite. We give a $C^1$ map $t_z:S^1\to S^1$,	\begin{equation}\label{t_z}
		t_z(\tau):= \frac{\int_{0}^{\tau}(z(s)^2)ds}{{\left\| z \right\|}^{2}}.
	\end{equation}  
	
	\begin{lemma}{(Cieliebak, Frauenfelder and Volkov \cite{cieliebak2022variational})}\label{lemma2.1}
		If $z\in C^0(S^1, \mathbb{R})$ has only finitely many zeros, then the map $t_z:S^1\to S^1$ defined by (\ref{t_z}) is a homeomorphism.
	\end{lemma}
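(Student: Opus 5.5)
The plan is to view $t_z$ first as a map $\mathbb{R}\to\mathbb{R}$ and then pass to the quotient $S^1=\mathbb{R}/\mathbb{Z}$. Extend $z$ to a $1$-periodic continuous function on $\mathbb{R}$ and define $T(\tau):=\int_0^\tau z(s)^2\,ds/\|z\|^2$ for $\tau\in\mathbb{R}$. Since $z$ has only finitely many zeros on $S^1$, it is not identically zero, so $\|z\|^2>0$ and $T$ is well defined. The proof then proceeds in four steps.

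\begin{enumerate}
\item \textbf{Regularity.} Because $z^2$ is continuous, the fundamental theorem of calculus shows that $T$ is $C^1$ with $T'(\tau)=z(\tau)^2/\|z\|^2\ge 0$.
\item \textbf{Equivariance.} Periodicity of $z^2$ gives
\[
T(\tau+1)=T(\tau)+\frac{\int_\tau^{\tau+1}z^2\,ds}{\|z\|^2}=T(\tau)+1 .
\]
Hence $T$ commutes with integer translations and descends to a well-defined continuous map $t_z:S^1\to S^1$.
\item \textbf{Strict monotonicity.} This is the only point where the finiteness of the zero set enters. Take $\tau_1<\tau_2$. The interval $[\tau_1,\tau_2]$ contains only finitely many zeros of $z$, since there are finitely many per period. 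So $z^2>0$ on some open subinterval of $[\tau_1,\tau_2]$, and therefore $T(\tau_2)-T(\tau_1)=\int_{\tau_1}^{\tau_2}z^2/\|z\|^2>0$. Thus $T$ is strictly increasing, even though $T'$ vanishes at the zeros of $z$. This is also why the inverse is only a homeomorphism and not $C^1$.
\item \textbf{Conclusion.} A continuous, strictly increasing $T$ with $T(\tau+1)=T(\tau)+1$ is a bijection $\mathbb{R}\to\mathbb{R}$: it is injective by monotonicity, and surjective by the intermediate value theorem, since $T(\tau)\to\pm\infty$ as $\tau\to\pm\infty$. Its inverse is continuous, because a strictly monotone continuous bijection of intervals is open. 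The inverse $T^{-1}$ also satisfies $T^{-1}(s+1)=T^{-1}(s)+1$, so it descends to a continuous inverse of $t_z$ on $S^1$. Hence $t_z$ is a homeomorphism.
\end{enumerate}

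The main obstacle is the strict monotonicity in the third step. Everything else is routine.
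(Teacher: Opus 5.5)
Your proof is correct and complete. The essential point is that the lift $T$ is strictly increasing, because $T'=z^2/\|z\|^2\ge 0$ vanishes only at finitely many points per period; together with $T(\tau+1)=T(\tau)+1$, this gives a homeomorphism of $S^1$. The paper gives no proof of its own and simply cites Cieliebak--Frauenfelder--Volkov for this lemma; your argument is the standard monotonicity argument behind that citation.
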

	
	Denoting the continuous inverse of $t_z$ is $\tau_z:=t^{-1}_z:S^1\to S^1$, then $\tau_z$ is of class $C^1$ on $S^1\backslash t_z(Z_z)$. We define a continuous map $q:S^1\to \mathbb{R}_{\ge 0}$,  \begin{equation}\label{transformation}
		q(t):=(z(\tau_z(t)))^2.
	\end{equation}
	
	So the two maps $z$ and $q$ are related by the Levi-Civita transformation (\ref{Levi-Civita transformation1}) in the sense that $\tau=\tau_z$. Their zero sets $Z_z$ and $Z_q:=q^{-1}(0)=t_z(Z_z)$ are in 1-to-1 correspondence via $t_z$ or $\tau_z$.
	
	If we are given a map $q\in C^0(S^1, \mathbb{R}_{\ge 0})$ with finite zero set $Z_q$ and satisfying $\int_{0}^{1}{\frac{ds}{q\left( s \right)}<\infty }$. We define $\tau_q:S^1\to S^1$, \begin{equation}\label{tauq}
		\tau_{q}(t):=(\int_{0}^{1}\frac{ds}{q\left( s \right)})^{-1}\int_{0}^{t}\frac{ds}{q\left( s \right)}.
	\end{equation}
	
	Lemma 2.1 in \cite{barutello2021regularized} shows that the map $\tau_q:S^1\to S^1$ is a homeomorphism whose inverse is $t_q:={\tau_q}^{-1}$, and $t_q(1)={\tau}^{-1}_{q}(1)=1$. We can define a continuous function $z:S^1\to \mathbb{R}$ by \begin{equation}\label{inverse transformation}
		z(\tau)^2:=q(t_q(\tau)).
	\end{equation}
	
	So the zero set $Z_z=\tau_{q}(Z_q)$ is finite, and we associate to $z$ the homeomorphism $t_z$ and $\tau_z$, then $\tau_q=\tau_z$ and $t_q=t_z$. Then $q$ and $z$ are related by the Levi-Civita transformation (\ref{transformation}). And if $Z_z$ consists of even number of points, then we can determine $z$ in the sense of a global sign by the requirement that $z$ switches its sign at each zero. Then from \cite{cieliebak2022variational}, we have the following lemma:
	
	\begin{lemma}{(Cieliebak, Frauenfelder and Volkov \cite{cieliebak2022variational})}\label{lemma2.2}
		The Levi-Civita transformation (\ref{transformation}) defines, for each integer $m\in 2\mathbb{N}$, a surjective 2-to-1 map 
		\begin{equation*}
			\begin{aligned}
				 \Phi_{LC}:& \left\{ \left. z\in {{C}^{0}}\left( {{S}^{1}},\mathbb{R} \right) \right|z\text{ has precisely }m\text{ zeros and switches sign at each zero} \right\} \\ 
				& \to \left\{ \left. q\in {{C}^{0}}\left( {{S}^{1}},{{\mathbb{R}}_{\ge 0}} \right) \right|q\text{ has precisely }m\text{ zeros and }\int_{0}^{1}{\frac{ds}{q\left( s \right)}<\infty } \right\}. \\ 
			\end{aligned}
		\end{equation*}
	\end{lemma}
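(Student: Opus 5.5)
The plan is to check three things: that $\Phi_{LC}$ is well defined into the target set, that it is surjective, and that each fibre has exactly two elements, $\{z,-z\}$. Throughout I use Lemma \ref{lemma2.1} and the homeomorphisms $t_z,\tau_z,\tau_q,t_q$ constructed above.

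\emph{Well-definedness.} Let $z$ have exactly $m$ zeros and switch sign at each of them. By Lemma \ref{lemma2.1}, $t_z$ is a homeomorphism, so $q=z^2\circ\tau_z$ is continuous and nonnegative. Its zero set is $Z_q=t_z(Z_z)$, which has exactly $m$ points. For integrability, substitute $t=t_z(\tau)$. Since $t_z$ is $C^1$ with $t_z'(\tau)=z(\tau)^2/\|z\|^2$, this gives
\[
\int_0^1\frac{dt}{q(t)}=\int_{S^1\setminus Z_z}\frac{1}{z(\tau)^2}\cdot\frac{z(\tau)^2}{\|z\|^2}\,d\tau=\frac{1}{\|z\|^2}<\infty .
\]
Here $\|z\|>0$ because $z$ has only finitely many zeros. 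To make the substitution rigorous, I apply it on each compact subinterval avoiding $Z_z$ and then use monotone convergence.

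\emph{Surjectivity.} Let $q$ be in the target set. Define $\tau_q$ by (\ref{tauq}); by Lemma 2.1 of \cite{barutello2021regularized} it is a homeomorphism. Set $|z(\tau)|=\sqrt{q(t_q(\tau))}$. This function has exactly $m$ zeros, and since $m$ is even we can choose signs that alternate on the $m$ arcs between consecutive zeros, consistently around the circle. The resulting $z$ is continuous and switches sign at each zero. The key point is to check that $t_z=t_q$, so that $\Phi_{LC}(z)=q$.

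Away from zeros, $\tau_q'(t)=c/q(t)$ with $c=(\int_0^1 ds/q)^{-1}$. Hence $t_q'(\tau)=q(t_q(\tau))/c=z(\tau)^2/c$. Integrating over $[0,1]$ and using $t_q(1)=1$ gives $c=\|z\|^2$, so $t_q(\tau)=\int_0^\tau z^2/\|z\|^2=t_z(\tau)$. This holds first on each arc; it extends to all of $S^1$ by continuity and because $t_q$ is absolutely continuous. The absolute continuity comes from $t_q(\tau)$ being the inverse of a strictly increasing function whose derivative $c/q$ is positive almost everywhere. It then follows that $\tau_z=\tau_q$ and $q=z^2\circ\tau_z$.

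\emph{Fibres.} Clearly $\Phi_{LC}(-z)=\Phi_{LC}(z)$, since $t_z$ depends only on $z^2$, and $-z\neq z$. Conversely, suppose $\Phi_{LC}(z)=\Phi_{LC}(w)=q$. First I show $t_z=t_w$. The function $t_z$ is the inverse of the homeomorphism $\tau\mapsto$ (normalised $\int dt/q$); I obtain this by reversing the computation above, which shows $\tau_z=\tau_q$, and likewise $\tau_w=\tau_q$. Therefore $z^2=q\circ t_q=w^2$, and $z,w$ have the same zero set. On each arc between zeros, $z$ and $w$ have constant sign, so $w=\pm z$ on that arc. Both functions switch sign at every zero, so the relative sign is the same on all arcs, and hence $w=\pm z$ globally.

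The main obstacle is the identification $\tau_z=\tau_q$, that is, justifying the change of variables across the zeros, where $q$ vanishes and $1/q$ blows up. I would handle it with arc-by-arc integration and monotone convergence, relying on the hypothesis $\int ds/q<\infty$.
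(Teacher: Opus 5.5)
Your proposal is correct and follows essentially the same route the paper sketches before the lemma. The paper builds the forward map from the homeomorphism $t_z$ of Lemma \ref{lemma2.1}, using $\int_0^1 dt/q = 1/\|z\|^2$; it builds the inverse by setting $z^2:=q\circ t_q$ and identifying $\tau_q=\tau_z$; and it fixes the sign of $z$ up to a global sign by requiring a switch at each of the evenly many zeros. Your arc-by-arc check of $t_q=t_z$ fills in a detail the paper only asserts, though the absolute-continuity remark is not needed: $t_q'=z^2/c$ is bounded on each arc, so continuity of $t_q$ at the finitely many zeros already suffices.
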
 
	  
	  Near each zero $t_{*}$ of $q$, denoting $s_{*}=\left\{ \begin{aligned}
	  	& -1,\text{ }t< t_{*} \\ 
	  	& +1,\text{ }t>t_{*} \\ 
	  \end{aligned} \right.$ be the local sign function. The Newton's equation of the Kepler problem of a body of mass 1 moving in the gravitational field of a body of mass $N$ is $$\ddot{q}(t)=-\frac{N}{(q(t))^2}.$$
	  
		If $q\in C^1$ outside its zero set, we denote its Kepler energy at time $t$ by $E_q(t):=\frac{(\dot{q}(t))^2}{2}-\frac{N}{q(t)}(t\in S^1\backslash Z_q)$ and $N>0$ is fixed. Then according to the Levi-Civita transformation, we get $E_z(\tau):=\frac{2{\left\| {z} \right\|}^{4}(z'(\tau))^{2}-N}{(z(\tau))^2}$, $\tau\in S^1\backslash Z_z$.
	
	\begin{lemma}{(Cieliebak, Frauenfelder and Volkov \cite{cieliebak2022variational})}
	Let $z$, $q$ be as in Lemma \ref{lemma2.2} related by the Levi-Civita transformation (\ref{transformation}), and let $l$ be a non-negative integer. Then the following hold:
	
	(a) $z\in H^1(S^1, \mathbb{R})$ if and only if $q\in H^1(S^1, \mathbb{R}_{\ge 0})$;
	
	(b) $z$ is of class $C^l$ outside its zeros if and only if $q$ is of class $C^l$ outside its zeros;
	
	(c)  $z$ is of class $C^1$ outside $Z_z$ if and only if $q$ is of class $C^1$ outside $Z_q$ and for each $t_*\in Z_q$, $\underset{t_{*}\ne t\to t_{*}}{\mathop{\lim }}\,s_{*}(t){\sqrt{q(t)}\dot{q}(t)}$ exists;
	
	(d) $z$ is of class $C^1$ with transverse zeros if and only if $q$ is of class $C^1$ outside $Z_q$ and for each $t_*\in Z_q$ the limit in (c) exists and is positive;
	
	(e) the energy $E_z:S^1\backslash Z_z\to \mathbb{R}$ is defined and extends to a continuous function $S^1\to \mathbb{R}$ if and only if $E_q:S^1\backslash Z_q\to \mathbb{R}$ is defined and extends to a continuous function $S^1\to \mathbb{R}$;
	
	(f) the conditions in (e) imply those in (d).
\end{lemma}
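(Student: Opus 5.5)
The plan is to work locally near a zero, because away from the zero sets every statement reduces to the chain rule for the time change (\ref{time change1}). Since $\tau_z$ is $C^1$ on $S^1\setminus Z_q$ with $d\tau/dt=\|z\|^2/q(t)=\|z\|^2/z(\tau)^2>0$, the map $t_z$ is a local $C^1$ diffeomorphism there. Part (b) then follows by induction on $l$ from the identities already computed,
\begin{equation*}
\dot q(t)=\frac{2\|z\|^2 z'(\tau)}{z(\tau)},\qquad \ddot q(t)=\frac{1}{q(t)}\Big(\frac{2\|z\|^4 z''(\tau)}{z(\tau)}-\frac{\dot q(t)^2}{2}\Big).
\end{equation*}
More generally, the $k$-th derivative of $q$ is a polynomial in $z,z',\dots,z^{(k)}$ divided by a power of $z$, and conversely.

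For (a), I would use the identity $\|\dot q\|^2=4\|z\|^2\|z'\|^2$. It is obtained by integrating over $S^1\setminus Z_z$ and changing variables with $dt=z^2d\tau/\|z\|^2$, which is legitimate because $Z_z$ is finite. It gives $\dot q\in L^2$ if and only if $z'\in L^2$. Continuity, together with absolute continuity on each arc between zeros, then yields the $H^1$ statement. The one detail to check is that absolute continuity on the closed arcs follows from $L^2$-integrability of the derivative on the open arcs plus continuity; this holds.

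For (c) and (d), I would write near $t_*$ the identity
\begin{equation*}
s_*(t)\sqrt{q(t)}\,\dot q(t)=s_*\,|z(\tau)|\,\frac{2\|z\|^2 z'(\tau)}{z(\tau)}=\pm 2\|z\|^2 z'(\tau).
\end{equation*}
This uses that $z$ switches sign at each zero, so $s_*|z|/z$ is a constant sign $\pm1$ on both sides of the zero. Hence the limit exists if and only if $\lim z'(\tau)$ exists as $\tau\to\tau_*$. For $z$ continuous and $C^1$ on a punctured neighbourhood, the existence of $\lim z'$ is equivalent to $z$ being $C^1$ at $\tau_*$, by the mean value theorem. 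This gives (c).

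For (d), the global sign is fixed by the convention in Lemma \ref{lemma2.2}. Near the zero, $z$ goes from one sign to the other, and after choosing the orientation of $s_*$ to match, positivity of the limit is exactly $z'(\tau_*)\neq 0$, i.e. a transverse zero. I expect the sign bookkeeping here to be the main point of care: one must check that the sign convention for $s_*$ and the alternating signs of $z$ are compatible at every zero, and not just at one.

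For (e), $E_q(t)=E_z(\tau_z(t))$ pointwise off the zeros, by direct substitution of $\dot q$ and $q=z^2$, and $t_z$ is a homeomorphism. So continuous extendability of one is equivalent to that of the other.

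For (f), which I regard as the main obstacle, assume $E_z$ extends continuously and so is bounded near $\tau_*$. Then
\begin{equation*}
2\|z\|^4 z'(\tau)^2=N+E_z(\tau)z(\tau)^2\to N>0,
\end{equation*}
so $|z'(\tau)|\to\sqrt{N/2}\,\|z\|^{-2}$. Because $z'$ is continuous and nonvanishing on each side of $\tau_*$ near $\tau_*$, it has a constant sign on each side. The sign change of $z$ forces these one-sided signs to agree, so $z'$ has a genuine limit. By (c) and (d) this gives the $C^1$ property with a transverse zero, and the limit in (c) equals $\sqrt{2N}$, which is positive.
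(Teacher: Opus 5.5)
The paper does not prove this lemma; it quotes it from Cieliebak--Frauenfelder--Volkov. Your plan is the same direct computation that underlies it: the identities $\dot q=2\|z\|^2z'/z$, $\|\dot q\|^2=4\|z\|^2\|z'\|^2$ and $E_q=E_z\circ\tau_z$, plus the mean value theorem at the zeros. It is correct, but two points should be tightened.

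\textbf{Part (c).} What you actually prove is that the limit exists if and only if $z$ is $C^1$ on all of $S^1$. That is the right statement, but it is not what is printed here. With ``$C^1$ outside $Z_z$'' on the left, part (b) makes the left side equivalent to the first condition on the right alone. The printed (c) would then claim that the limit always exists, which is false. For example, take $z(\tau)=(\tau-\tau_*)\bigl(2+\sin\frac{1}{\tau-\tau_*}\bigr)$ near $\tau_*$. It is smooth off $\tau_*$ and changes sign there, yet $z'$, and hence $s_*\sqrt{q}\,\dot q$, has no limit. You should say explicitly that you are proving the corrected version.

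\textbf{Part (d).} There is no orientation to choose and no compatibility between different zeros to check.
\begin{itemize}
\item The sign $s_*$ is fixed by definition, and $\tau_z$ preserves orientation.
\item Since $z$ changes sign at $\tau_*$, the product $s_*\,\mathrm{sgn}(z)$ is a constant $\varepsilon\in\{\pm1\}$ on a punctured neighbourhood of $\tau_*$, equal to the sign of $z$ just to the right of $\tau_*$.
\item Whenever $z'(\tau_*)\neq 0$, this forces $\varepsilon=\mathrm{sgn}\,z'(\tau_*)$.
\end{itemize}
Hence the limit in (c) equals $2\|z\|^2|z'(\tau_*)|$ at every zero, regardless of the global sign of $z$. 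It is therefore automatically nonnegative, and positive exactly at transverse zeros. This is the same local observation you already use in (f) to make the one-sided signs of $z'$ agree.
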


	\begin{corollary}{(Cieliebak, Frauenfelder and Volkov \cite{cieliebak2022variational})}\label{Lemma2.3}
		For each even $m\in 2\mathbb{N}$, the Levi-Civita map $\Phi_{LC}$ of Lemma \ref{lemma2.2} restricts to a surjective 2-to-1 map  $$\Phi_{LC}:\mathcal{C}^{1}_{ce}(S^1,\mathbb{R})\to \mathscr{H}^{1}_{ce}(S^1,\mathbb{R}_{\ge 0}),$$ where
		
		\noindent $\bullet $ $\mathcal{C}^{1}_{ce}(S^1,\mathbb{R})$ denotes the set of $z\in C^1(S^1,\mathbb{R})$ with precisely $m$ zeros such that all zeros are transverse and the energy $E_z$ extends to a continuous function $S^1\to \mathbb{R}$ and 
		
		\noindent $\bullet $  $\mathscr{H}^{1}_{ce}(S^1,\mathbb{R}_{\ge 0})$ denotes the set of $q\in H^1(S^1,\mathbb{R}_{\ge 0})$ with precisely $m$ zeros such that $q$ is of class $C^1$ outside its zeros and the energy $E_q$ extends  to a continuous function  $S^1\to \mathbb{R}$.
	\end{corollary}
	
	\section{Variational approach to the $e^{-}Z^{2+}e^{-}$ Helium with mean interaction}
    The Newton's equation describing an electron moving in the electric field of a fixed nucleus with charge $N>0$ is $$\ddot{q}(t)=-\frac{N}{(q(t))^2},$$ which is the same formula as that of the Kepler problem shown before. Here, in the $e^{-}Z^{2+}e^{-}$ Helium, the charge $N=2$.
    
    Assume the nucleus lies in the origin point on the real line. Let the distance between one electron and the nucleus is $q_1$, the distance between the other electron and the nucleus is  $q_2$, $q_1>0,q_2>0$. In this case, we don't know the exact relationship of size between $q_1$ and $q_2$, and we denote $\bar{{{q}_{i}}}=\int_{0}^{1}{ {{q}_{i}}\left( t \right) dt}\left(i=0,1\right)$.
    
    In this paper, we study the mean interaction equations of the $e^{-}Z^{2+}e^{-}$ Helium: \begin{equation}\label{main equation of mean interaction}
    	\begin{aligned}
    		&\ddot{q_1}=-\frac{2}{\left(q_1\right)^2}+\frac{1}{ \left(\bar{q}_{1}+\bar{q}_{2}\right)^2},\\
    		&\ddot{q_2}=-\frac{2}{\left(q_2\right)^2}+\frac{1}{ \left(\bar{q}_{1}+\bar{q}_{2}\right)^2}.
    \end{aligned}
    \end{equation}
    \subsection{Variational characterization of generalized solutions}
    
    Solutions of (\ref{main equation of mean interaction}) avoiding the origin are the critical points of the action functional
    
     \begin{equation}
     	\tilde{\mathcal{S}}_{av}\left(q_1,q_2\right):=\sum_{i=1}^{2}\left(\frac{1}{2}\int_{0}^{1}({\dot{q}_i}(t))^{2}dt+\int_{0}^{1}\frac{2}{q_i(t)}dt\right)-\frac{1}{\bar{q}_{1}+\bar{q}_{2}},
     \end{equation}   
   where $\tilde{\mathcal{S}}_{av}:\tilde{\mathscr{H}}_{ur}^1\to \mathbb{R}$, $\tilde{\mathscr{H}}_{ur}^1:=\left\{q=(q_1,q_2)\in H^1(S^1,\mathbb{R}_{\ge 0}^2)\mid q_1>0, q_2>0\right\}$ is an open subset of Hilbert space $H^1(S^1,\mathbb{R}_{\ge 0}^2)$.
    
    In fact, this unregularized functional has no critical points because all periodic solutions have collisions. We use the regularization described in Section \ref{section2} to study the motions of two electrons in the $e^{-}Z^{2+}e^{-}$ mean Helium.
     
     For $i=1,2$, let $q_i$ and $z_i$ be related by Levi-Civita transformations \begin{equation}\label{ Levi-Civita transformations2}
     	q_i(t)=z_i{(\tau_i(t))}^2,
     \end{equation}
     for time changes $\tau_i(t)$ satisfying $\tau_i(0)=0$ and
     \begin{equation}\label{time change2}
     	\frac{dt}{q_i(t)}=\frac{d\tau_i(t)}{\parallel z_i\parallel ^2}.
     \end{equation} 
     
     So we can rewrite the action functional in terms of the $z_i$，$$-\frac{1}{\bar{q}_1+\bar{q}_2}=-\frac{1}{\frac{\parallel z_1^2\parallel ^2}{\parallel z_1\parallel ^2}+\frac{\parallel z_{2}^{2}\parallel ^2}{\parallel z_2\parallel ^2}}=-\frac{\parallel z_1\parallel ^2\parallel z_2\parallel ^2}{\parallel z_{1}^{2}\parallel ^2\parallel z_2\parallel ^2+\parallel z_{2}^{2}\parallel ^2\parallel z_1\parallel ^2},$$
     
      $$\frac{1}{2}\int_{0}^{1}({\dot{q}_i}(t))^{2}dt+\int_{0}^{1}\frac{2}{q_i(t)}dt=2\parallel z_i\parallel ^2\parallel z_{i}^{\prime}\parallel ^2+\frac{2}{\parallel z_i\parallel ^2}.$$
      
     We denote the resulting action functional with mean interaction of $\left(z_1,z_2\right)$ by
     \begin{equation}
     	\tilde{\mathscr{A}}\left(z_1,z_2\right):=-\frac{\parallel z_1\parallel ^2\parallel z_2\parallel ^2}{\parallel z_{1}^{2}\parallel ^2\parallel z_2\parallel ^2+\parallel z_{2}^{2}\parallel ^2\parallel z_1\parallel ^2},
     \end{equation} 
 
     \begin{equation}
     	\tilde{\mathscr{Q}}\left( z_i \right) =2\parallel z_i\parallel ^2\parallel z_{i}^{\prime}\parallel ^2+\frac{2}{\parallel z_i\parallel ^2}(i=1,2).
     \end{equation}
     
     These quantities are naturally defined on the space
     \begin{equation}
     	\tilde{\mathscr{H}}_{av}^{1}:=\left\{ z=\left( z_1,z_2 \right) \in H^1\left( S^1,\mathbb{R}^2 \right) \mid \parallel z_1\parallel >0,\parallel z_2\parallel >0 \right\},
     \end{equation} 
    where $\tilde{\mathscr{H}}_{av}^{1}$ is an open subset of the Hilbert space $H^1\left( S^1,\mathbb{R}^2 \right)$.
     
      On $\tilde{\mathscr{H}}_{av}^{1}$, we consider the functional $\tilde{\mathscr{B}}_{av}:\tilde{\mathscr{H}}_{av}^{1} \to \mathbb{R}$,
     \begin{equation}\label{Regularized action functional}
     	\tilde{\mathscr{B}}_{av}(z_1,z_2):=\tilde{\mathscr{Q}}(z_1)+\tilde{\mathscr{Q}}(z_2)+\tilde{\mathscr{A}}\left(z_1,z_2\right).
     \end{equation}  
     
     Due to the construction of (\ref{transformation}) and (\ref{inverse transformation}), there is a torus action acting as a time shift both on $\tilde{\mathscr{H}}_{ur}^1$ and $\tilde{\mathscr{H}}_{av}^{1}$, and this action is free. In other words, $\forall (\theta_{1},\theta_{2})\in \mathbb{T}^2$,  $(\theta_1,\theta_2)_{*}(q_1,q_2)=(q_1(t+\theta_1),q_2(t+\theta_2))$ and $(\theta_1,\theta_2)_{*}(z_1,z_2)=(z_1(\tau+\theta_1),z_2(\tau+\theta_2))$.

     \begin{lemma}
     	The torus action is an equivariant map from $\tilde{\mathscr{H}}_{av}^{1}$ to $\tilde{\mathscr{H}}_{ur}^1$, with respect to the Levi-Civita transformation (\ref{ Levi-Civita transformations2}) between $z_i$ and $q_i$, $i=1,2$. i.e. the diagram below is commutative, where $\Phi _{LC}\times \Phi _{LC} ( z) =(z_1( \tau _{z_1}( t))^{2},z_2( \tau _{z_2}( t))^{2}) =:(q_1(t),q_2(t))=q( t)$.
     	
     	\text{   }
     	
     	\tikzset{every picture/.style={line width=0.75pt}} %set default line width to 0.75pt        
     	\centering

     	\tikzset{every picture/.style={line width=0.75pt}} %set default line width to 0.75pt        
     	
     	\begin{tikzpicture}[x=0.75pt,y=0.75pt,yscale=-1,xscale=1]
     		%uncomment if require: \path (0,198); %set diagram left start at 0, and has height of 198
     		
     		%Straight Lines [id:da31713639922328096] 
     		\draw    (256.71,47.35) -- (388.69,47.7) ;
     		\draw [shift={(390.69,47.71)}, rotate = 180.15] [color={rgb, 255:red, 0; green, 0; blue, 0 }  ][line width=0.75]    (10.93,-3.29) .. controls (6.95,-1.4) and (3.31,-0.3) .. (0,0) .. controls (3.31,0.3) and (6.95,1.4) .. (10.93,3.29)   ;
     		%Straight Lines [id:da2970887814584948] 
     		\draw    (257.72,128.52) -- (389.7,128.87) ;
     		\draw [shift={(391.7,128.87)}, rotate = 180.15] [color={rgb, 255:red, 0; green, 0; blue, 0 }  ][line width=0.75]    (10.93,-3.29) .. controls (6.95,-1.4) and (3.31,-0.3) .. (0,0) .. controls (3.31,0.3) and (6.95,1.4) .. (10.93,3.29)   ;
     		%Straight Lines [id:da44086318184735007] 
     		\draw    (233.62,60.71) -- (234.98,113.52) ;
     		\draw [shift={(235.03,115.52)}, rotate = 268.52] [color={rgb, 255:red, 0; green, 0; blue, 0 }  ][line width=0.75]    (10.93,-3.29) .. controls (6.95,-1.4) and (3.31,-0.3) .. (0,0) .. controls (3.31,0.3) and (6.95,1.4) .. (10.93,3.29)   ;
     		%Straight Lines [id:da5625213883234771] 
     		\draw    (408.36,59.79) -- (409.72,112.6) ;
     		\draw [shift={(409.77,114.6)}, rotate = 268.52] [color={rgb, 255:red, 0; green, 0; blue, 0 }  ][line width=0.75]    (10.93,-3.29) .. controls (6.95,-1.4) and (3.31,-0.3) .. (0,0) .. controls (3.31,0.3) and (6.95,1.4) .. (10.93,3.29)   ;
     		%Shape: Arc [id:dp5834423796177967] 
     		\draw  [draw opacity=0] (294.43,71.71) .. controls (300.11,63.81) and (310.32,57.98) .. (321.8,57.05) .. controls (339,55.66) and (352.53,65.8) .. (352.01,79.7) .. controls (351.68,88.62) and (345.64,96.93) .. (336.81,102.09) -- (320.87,82.22) -- cycle ; \draw   (294.43,71.71) .. controls (300.11,63.81) and (310.32,57.98) .. (321.8,57.05) .. controls (339,55.66) and (352.53,65.8) .. (352.01,79.7) .. controls (351.68,88.62) and (345.64,96.93) .. (336.81,102.09) ;  
     		\draw   (350.06,100.33) -- (335.69,103.59) -- (338.47,88.81) ;
     		
     		% Text Node
     		\draw (215.62,36.71) node [anchor=north west][inner sep=0.75pt]    {$\tilde{\mathscr{H}}_{av}^{1}$};
     		% Text Node
     		\draw (218.63,117.88) node [anchor=north west][inner sep=0.75pt]    {$\tilde{\mathscr{H}}_{av}^{1}$};
     		% Text Node
     		\draw (396.36,39.78) node [anchor=north west][inner sep=0.75pt]    {$\tilde{\mathscr{H}}_{ur}^1$};
     		% Text Node
     		\draw (399.37,119.92) node [anchor=north west][inner sep=0.75pt]    {$\tilde{\mathscr{H}}_{ur}^1$};
     		% Text Node
     		\draw (275.01,24.16) node [anchor=north west][inner sep=0.75pt]    {$\Phi _{LC} \times \Phi _{LC}$};
     		% Text Node
     		\draw (173.48,73.7) node [anchor=north west][inner sep=0.75pt]    {$T_{( \theta _{1} ,\theta _{2})}{}$};
     		% Text Node
     		\draw (423.53,76.88) node [anchor=north west][inner sep=0.75pt]    {$T_{\left( \theta _{1}^{'} ,\theta _{2}^{'}\right)}$};
     		% Text Node
     		\draw (127.32,35.67) node [anchor=north west][inner sep=0.75pt]    {$z=( z_{1} ,z_{2}) \in $};
     		% Text Node
     		\draw (9.57,147.81) node [anchor=north west][inner sep=0.75pt]    {$\tilde{z} =T_{( \theta _{1} ,\theta _{2})} z=\left(\tilde{z_{1}} ,\tilde{z_{ 2}}\right) =( z_{1}( \tau +\theta _{1}) ,z_{2}( \tau +\theta _{2}))$};
     		% Text Node
     		\draw (222.23,148.28) node [anchor=north west][inner sep=0.75pt]  [rotate=-271.51]  {$\in $};
     		% Text Node
     		\draw (374.81,140.93) node [anchor=north west][inner sep=0.75pt]    {$\tilde{q} =T_{\left( \theta _{1}^{'} ,\theta _{2}^{'}\right)} q=\left(\tilde{q_{1}} ,\tilde{q_{ 2}}\right)=\left( q_{1}\left( t+\theta _{1}^{'}\right) ,q_{2}\left( t+\theta _{2}^{'}\right)\right)$};
     		% Text Node
     		\draw (399.98,146.28) node [anchor=north west][inner sep=0.75pt]  [rotate=-271.51]  {$\in $};
     		% Text Node
     		\draw (433.68,36.75) node [anchor=north west][inner sep=0.75pt]    {$q=( q_{1} ,q_{2})$};
     		% Text Node
     		\draw (433.56,53.9) node [anchor=north west][inner sep=0.75pt]  [rotate=-179.25]  {$\in $};
     		% Text Node
     		\draw (283.01,109.16) node [anchor=north west][inner sep=0.75pt]    {$\Phi _{LC} \times \Phi _{LC}$};

     	\end{tikzpicture}
     	
     \end{lemma}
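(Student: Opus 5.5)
The plan is to reduce the statement to a single component, since $\Phi_{LC}\times\Phi_{LC}$ and the torus action both act componentwise. So it suffices to show the following. Let $z\in H^1(S^1,\mathbb{R})$ have finitely many zeros, let $\theta\in S^1$, and put $\tilde z(\tau):=z(\tau+\theta)$. Then $\Phi_{LC}(\tilde z)(t)=\Phi_{LC}(z)(t+\theta')$ for the explicit shift
\[
\theta':=t_z(\theta)=\frac{\int_0^{\theta}z(s)^2\,ds}{\|z\|^2}.
\]
Applying this with $(\theta_1,\theta_2)$ and $\theta_i':=t_{z_i}(\theta_i)$ gives the commutativity of the diagram. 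Note that $\theta'$ depends on $z$ as well as on $\theta$, which is why the right vertical arrow carries the primed angles $(\theta_1',\theta_2')$.

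The key computation compares the time changes. Translation invariance of the integral gives $\|\tilde z\|=\|z\|$. By (\ref{t_z}),
\[
t_{\tilde z}(\tau)=\frac{\int_0^{\tau}z(s+\theta)^2\,ds}{\|z\|^2}=\frac{\int_\theta^{\tau+\theta}z(s)^2\,ds}{\|z\|^2}=t_z(\tau+\theta)-t_z(\theta),
\]
where we use that $t_z$ lifts to a map $\mathbb{R}\to\mathbb{R}$ with $t_z(\tau+1)=t_z(\tau)+1$, so the identity holds modulo $\mathbb{Z}$. By Lemma \ref{lemma2.1} both maps are homeomorphisms of $S^1$. Inverting the identity gives
\[
\tau_{\tilde z}(t)=\tau_z(t+\theta')-\theta'\quad\text{and hence}\quad \tau_{\tilde z}(t)+\theta=\tau_z(t+\theta').
\]
Substituting into (\ref{transformation}):
\[
\Phi_{LC}(\tilde z)(t)=\tilde z(\tau_{\tilde z}(t))^2=z(\tau_{\tilde z}(t)+\theta)^2=z(\tau_z(t+\theta'))^2=q(t+\theta').
\]
This is exactly the required equivariance. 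The normalization $\tau_{\tilde z}(0)=0$ is built into the definition of $t_{\tilde z}$, so the base point is handled automatically.

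We also need to check that the spaces are preserved. The shift preserves $H^1$ regularity and the conditions $\|z_i\|>0$, and it preserves the number and transversality of the zeros. On the $q$ side, a time shift preserves $H^1$, positivity almost everywhere, and the integral $\int_0^1 ds/q$. So each arrow maps between the spaces in the diagram, and in particular Lemma \ref{lemma2.2} and Corollary \ref{Lemma2.3} apply to shifted elements as well.

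The main obstacle is the bookkeeping with the lift to $\mathbb{R}$. One has to verify that the identity $t_{\tilde z}(\tau)=t_z(\tau+\theta)-t_z(\theta)$ is consistent on $S^1=\mathbb{R}/\mathbb{Z}$, and that the inverse homeomorphism is computed correctly under this normalization. I would handle this by working throughout with degree-one lifts of circle homeomorphisms and using periodicity of $z^2$. Finally, the correspondence $\theta\mapsto\theta'=t_z(\theta)$ is itself a homeomorphism of $S^1$ by Lemma \ref{lemma2.1}. It follows that every shift of $q$ arises from a shift of $z$, so the torus orbits correspond bijectively under $\Phi_{LC}\times\Phi_{LC}$, up to the global sign ambiguity already present in the $2$-to-$1$ map.
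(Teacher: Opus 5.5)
Your proposal is correct and is essentially the paper's proof. Both arguments compute $t_{\tilde z}(\tau)=t_z(\tau+\theta)-t_z(\theta)$ from the definition of $t_z$, invert it to get $\tau_{\tilde z}(t)+\theta=\tau_z(t+t_z(\theta))$, and substitute into the Levi-Civita formula with $\theta_i'=t_{z_i}(\theta_i)$. Your lift bookkeeping and the remark that $\theta\mapsto t_z(\theta)$ is a homeomorphism are sensible additions the paper omits. One small slip: the first inverted formula should read $\tau_{\tilde z}(t)=\tau_z(t+\theta')-\theta$, not $-\theta'$. Your next line, $\tau_{\tilde z}(t)+\theta=\tau_z(t+\theta')$, is the correct one and is the one you actually use.
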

     
     %补充证明
     \begin{proof}
     	From the definition of $t_z$ in (\ref{t_z}), $\forall \tau\in S^1$, $i=1,2$, we have $t_{T_{\theta_{i}}(z_i)}(\tau)=t_{z_i}(\tau+\theta_{i})-t_{z_i}(\theta_{i})$. Because $t_z:S^1\to S^1$ is a homeomorphism, $\tau_z=t_z^{-1}:S^1\to S^1$, so $t_{T_{\theta_{i}}(z_i)}(\tau_{T_{\theta_{i}}(z_i)}(t))=t$. Then we get $$t_{z_i}(\tau_{T_{\theta_{i}}(z_i)}(t)+\theta_{i})=t_{z_i}(\theta_{i})+t,$$ and furthermore $$\tau_{T_{\theta_{i}}(z_i)}(t)+\theta_{i}=t_{z_i}^{-1}(t+t_{z_i}(\theta_{i}))=\tau_{z_i}(t+t_{z_i}(\theta_{i})).$$
     	
     	Let $T_{\left( \theta _{1}^{'} ,\theta _{2}^{'}\right)}:=T_{(t_{z_1}(\theta_{1}),t_{z_2}(\theta_{2}))}$, then\begin{equation*}
     		\begin{aligned}
     			\tilde{q_i}&=(T_{\theta_{i}}{z_i}(\tau_{T_{\theta_{i}}{z_i}}(t))^2\\
     			&=(z(\tau_{T_{\theta_{i}}{z_i}}(t)+\theta_{i}))^2\\
     			&=(z(\tau_{z_i}(t+t_{z_i}(\theta_{i}))))^2\\
     			&=q(t+t_{z_i}(\theta_{i})).
     		\end{aligned}
     	\end{equation*} 
     	
     	Now we have $\Phi _{LC} \times \Phi _{LC}\circ T_{( \theta _{1} ,\theta _{2})}=T_{\left( \theta _{1}^{'} ,\theta _{2}^{'}\right)}\circ \Phi _{LC} \times \Phi _{LC}$, we prove this diagram above is commutative.
     \end{proof}
     
   So we can mod out this torus action. We denote $\mathscr{H}_{ur}^1:=\tilde{\mathscr{H}}_{ur}^1/{\mathbb{T}}^2$\label{H1av}, $\mathscr{H}_{av}^{1}:=\tilde{\mathscr{H}}_{av}^{1}/{\mathbb{T}}^2$.
   
   From the expression of $\tilde{\mathcal{S}}_{av}$ and $\tilde{\mathscr{B}}_{av}$, they are invariant under this torus action which comes from the time reparameterization.  We define $\mathcal{S}_{av}:\mathscr{H}_{ur}^1\to \mathbb{R}$,\begin{equation}\label{action functional of mean helium}
   	\mathcal{S}_{av}\left(q_1,q_2\right):=\sum_{i=1}^{2}\left(\frac{1}{2}\int_{0}^{1}({\dot{q}_i}(t))^{2}dt+\int_{0}^{1}\frac{2}{q_i(t)}dt\right)-\frac{1}{\bar{q}_{1}+\bar{q}_{2}},
   \end{equation}  
   and $\mathscr{B}_{av}:\mathscr{H}_{av}^{1} \to \mathbb{R}$,
   \begin{equation}\label{Bav}
   	\mathscr{B}_{av}(z_1,z_2):=\mathscr{Q}(z_1)+\mathscr{Q}(z_2)+\mathscr{A}\left(z_1,z_2\right),
   \end{equation}  with functionals \begin{equation}\label{Lagrangian}
   	\mathscr{Q}\left( z_i \right) =2\parallel z_i\parallel ^2\parallel z_{i}^{\prime}\parallel ^2+\frac{2}{\parallel z_i\parallel ^2}(i=1,2): \mathscr{H}_{av}^{1} \to \mathbb{R},
   \end{equation}
   \begin{equation}\label{mean interaction}
   	\mathscr{A}\left(z_1,z_2\right):=-\frac{\parallel z_1\parallel ^2\parallel z_2\parallel ^2}{\parallel z_{1}^{2}\parallel ^2\parallel z_2\parallel ^2+\parallel z_{2}^{2}\parallel ^2\parallel z_1\parallel ^2}:\mathscr{H}_{av}^{1} \to \mathbb{R}.
   \end{equation} 
      
      We call $(q_1,q_2)\in H^1\left( S^1,\mathbb{R}_{\ge 0}\times \mathbb{R}_{\ge 0} \right)$ a generalized solution of (\ref{main equation of mean interaction}) if for $i=1,2$ the following holds:
      
      (1)the zero sets $Z_i=q^{-1}_{i}(0)\subset S^1$ are finite and has an even number of elements;
      
      (2)the restrictions $q_i:S^1\setminus Z_i \to \mathbb{R}_{\ge 0}(i=1,2)$ are smooth and satisfy (\ref{main equation of mean interaction});
      
      (3)the energies $E_i(t):=\frac{{\dot{q}_i}(t)^2}{2}-\frac{2}{q_i(t)}-\frac{q_i(t)}{(\bar{q}_1+\bar{q}_2)^2}$, $t\in S^1\setminus Z_i$ extend to continuous functions $E_i:S^1\to \mathbb{R}$.
      
      \begin{remark}
      	In general, the definition of a generalized solution does not specifically distinguish the number of zeros of a function. Here, since both $q_i$ and $z_i$ are defined on the loop space $S^1$, $z_i$ must has an even number of zeros in order to represent periodic orbits. And according to Section \ref{section2}, we first consider generalized solutions with even number of zeros. 
      \end{remark}
      
      The exclusion of collisions is a significant challenge in the application of variational techniques, which motivates the study of generalized solutions derived from the variational approach to the study of selected trajectories in the $n$-body problem. In celestial mechanics, there is a long tradition of using the notion of generalized solutions. Many different definitions of generalized solutions have been given in the literature on variational methods, see e.g. [\cite{ambrosetti1990closed},\cite{bahri1991periodic},\cite{barutello2008singularities},\cite{coti1994collision},\cite{rabinowitz1994note},\cite{tanaka1993prescribed}], where \cite{barutello2008singularities} addressing non-autonomous systems. There are also several papers proving the existence of periodic generalized solutions to the one-dimensional perturbed Kepler problem, see e.g. [\cite{ortega2011linear},\cite{rebelo2018periodic},\cite{zhao2016some}]. 
      
%      \begin{thm}({Barutello, Ortega and Verzini\cite{barutello2021regularized}})\label{main reference theorem}
%      	Under the Levi-Civita transformation (\ref{ Levi-Civita transformations2}) with time change (\ref{time change2}), for $i=1,2$, critical points $z_i:S^1\to \mathbb{R}$ of the functionals $\mathscr{D}(z_i)$ defined in (\ref{Lagrangian}) are in 2-to-1 correspondence with generalized solutions with even number of zeros $q_i: S^1\to \mathbb{R}_{\ge 0}$ of (\ref{main equation of mean interaction}) separately.
%      \end{thm}    
%      
%      We will not go into the proof of this Theorem, the details can be found in \cite{cieliebak2022variational}. From this Theorem, we can get the following:
      
%      \begin{thm}{(Generalized solutions with mean interaction)}\label{4-1对应定理}
%      	Under the Levi-Civita transformations (\ref{ Levi-Civita transformations2}) with time change (\ref{time change2}), critical points $(z_1,z_2)$ of the action functional $\mathscr{B}_{av}$ are in 4-to-1 correspondence with generalized solutions $(q_1,q_2)$ of (\ref{main equation of mean interaction}).
%      \end{thm}
  
      We will prove Theorem \ref{Main1} in details in the following four subsections.
      
      \subsection{The differential of $\mathscr{B}_{av}$}\label{section3.2}
      The differential of the mean interaction $\mathscr{A}$ at $(z_1,z_2)\in \mathscr{H}_{av}^{1}$ in the direction $(v_1,v_2)\in H^1\left( S^1,\mathbb{R}^2 \right)$ is given by
      
      \begin{equation*}
      \begin{aligned}
      	 D\mathscr{A}\left[ {{z}_{1}},{{z}_{2}} \right]\left( {{v}_{1}},{{v}_{2}} \right) &=-2\frac{{{\left\| {{z}_{2}} \right\|}^{2}}\cdot \left\langle {{z}_{1}},{{v}_{1}} \right\rangle +{{\left\| {{z}_{1}} \right\|}^{2}}\cdot \left\langle {{z}_{2}},{{v}_{2}} \right\rangle }{{{\left\| z_{1}^{2} \right\|}^{2}}\cdot {{\left\| {{z}_{2}} \right\|}^{2}}+{{\left\| z_{2}^{2} \right\|}^{2}}\cdot {{\left\| {{z}_{1}} \right\|}^{2}}} \\ 
      	& \text{  }\text{  }\text{  }+2\frac{{{\left\| {{z}_{1}} \right\|}^{2}}\cdot {{\left\| {{z}_{2}} \right\|}^{2}}\left( 2{{\left\| {{z}_{2}} \right\|}^{2}}\cdot \left\langle z_{1}^{3},{{v}_{1}} \right\rangle +{{\left\| z_{1}^{2} \right\|}^{2}}\cdot \left\langle {{z}_{2}},{{v}_{2}} \right\rangle  \right)}{{{\left( {{\left\| z_{1}^{2} \right\|}^{2}}\cdot {{\left\| {{z}_{2}} \right\|}^{2}}+{{\left\| z_{2}^{2} \right\|}^{2}}\cdot {{\left\| {{z}_{1}} \right\|}^{2}} \right)}^{2}}} \\ 
      	& \text{  }\text{  }\text{  }+2\frac{{{\left\| {{z}_{1}} \right\|}^{2}}\cdot {{\left\| {{z}_{2}} \right\|}^{2}}\left( 2{{\left\| {{z}_{1}} \right\|}^{2}}\cdot \left\langle z_{2}^{3},{{v}_{2}} \right\rangle +{{\left\| z_{2}^{2} \right\|}^{2}}\cdot \left\langle {{z}_{1}},{{v}_{1}} \right\rangle  \right)}{{{\left( {{\left\| z_{1}^{2} \right\|}^{2}}\cdot {{\left\| {{z}_{2}} \right\|}^{2}}+{{\left\| z_{2}^{2} \right\|}^{2}}\cdot {{\left\| {{z}_{1}} \right\|}^{2}} \right)}^{2}}} \\ 
      	& \text{                             =}-2\frac{{{\left\| {{z}_{2}} \right\|}^{4}}\cdot {{\left\| z_{1}^{2} \right\|}^{2}}}{{{\left( {{\left\| z_{1}^{2} \right\|}^{2}}\cdot {{\left\| {{z}_{2}} \right\|}^{2}}+{{\left\| z_{2}^{2} \right\|}^{2}}\cdot {{\left\| {{z}_{1}} \right\|}^{2}} \right)}^{2}}}\left\langle {{z}_{1}},{{v}_{1}} \right\rangle  \\ 
      	& \text{  }\text{  }\text{ }-2\frac{{{\left\| {{z}_{1}} \right\|}^{4}}\cdot {{\left\| z_{2}^{2} \right\|}^{2}}}{{{\left( {{\left\| z_{1}^{2} \right\|}^{2}}\cdot {{\left\| {{z}_{2}} \right\|}^{2}}+{{\left\| z_{2}^{2} \right\|}^{2}}\cdot {{\left\| {{z}_{1}} \right\|}^{2}} \right)}^{2}}}\left\langle {{z}_{2}},{{v}_{2}} \right\rangle  \\ 
      	& \text{  }\text{  }\text{ }+4\frac{{{\left\| {{z}_{1}} \right\|}^{2}}\cdot {{\left\| {{z}_{2}} \right\|}^{4}}}{{{\left( {{\left\| z_{1}^{2} \right\|}^{2}}\cdot {{\left\| {{z}_{2}} \right\|}^{2}}+{{\left\| z_{2}^{2} \right\|}^{2}}\cdot {{\left\| {{z}_{1}} \right\|}^{2}} \right)}^{2}}}\left\langle z_{1}^{3},{{v}_{1}} \right\rangle  \\ 
      	&\text{  }\text{  } \text{ }+4\frac{{{\left\| {{z}_{1}} \right\|}^{4}}\cdot {{\left\| {{z}_{2}} \right\|}^{2}}}{{{\left( {{\left\| z_{1}^{2} \right\|}^{2}}\cdot {{\left\| {{z}_{2}} \right\|}^{2}}+{{\left\| z_{2}^{2} \right\|}^{2}}\cdot {{\left\| {{z}_{1}} \right\|}^{2}} \right)}^{2}}}\left\langle z_{2}^{3},{{v}_{2}} \right\rangle.   
      \end{aligned}
    \end{equation*} 
    
    The differential of $\mathscr{Q}_i$ at $z_i\in H^1\left( S^1,\mathbb{R}^2 \right)\setminus \left\{0\right\}$ in the direction $v_i\in H^1\left( S^1,\mathbb{R}^2 \right)$ is $$D\mathscr{Q}_i(z_i)v_i=4{{\left\| {{z}'_{i}} \right\|}^{2}}\left\langle {{z}_{i}},{{v}_{i}} \right\rangle -4{{\left\| {{z}_{i}} \right\|}^{2}}\left\langle {{z}''_{i}},{{v}_{i}} \right\rangle -\frac{4}{{{\left\| {{z}_{i}} \right\|}^{4}}}\left\langle {{z}_{i}},{{v}_{i}} \right\rangle. $$
    
    So the differential of $\mathscr{B}_{av}$ is 
    \begin{equation}\label{the differntial of B_av }
    \begin{aligned}
    	D\mathscr{B}_{av} \left[ {{z}_{1}},{{z}_{2}} \right]\left( {{v}_{1}},{{v}_{2}} \right)&=4\sum\limits_{i=1}^{2}{\left( {{\left\| {{z}'_{i}} \right\|}^{2}}\left\langle {{z}_{i}},{{v}_{i}} \right\rangle -{{\left\| {{z}_{i}} \right\|}^{2}}\left\langle {{z}''_{i}},{{v}_{i}} \right\rangle -\frac{\left\langle {{z}_{i}},{{v}_{i}} \right\rangle}{{{\left\| {{z}_{i}} \right\|}^{4}}} \right)} \\
    	&\text{  }\text{  } \text{ }-2\frac{{{\left\| {{z}_{2}} \right\|}^{4}}\cdot {{\left\| z_{1}^{2} \right\|}^{2}}}{{{\left( {{\left\| z_{1}^{2} \right\|}^{2}}\cdot {{\left\| {{z}_{2}} \right\|}^{2}}+{{\left\| z_{2}^{2} \right\|}^{2}}\cdot {{\left\| {{z}_{1}} \right\|}^{2}} \right)}^{2}}}\left\langle {{z}_{1}},{{v}_{1}} \right\rangle   \\ 
    	& \text{  }\text{  }\text{ }-2\frac{{{\left\| {{z}_{1}} \right\|}^{4}}\cdot {{\left\| z_{2}^{2} \right\|}^{2}}}{{{\left( {{\left\| z_{1}^{2} \right\|}^{2}}\cdot {{\left\| {{z}_{2}} \right\|}^{2}}+{{\left\| z_{2}^{2} \right\|}^{2}}\cdot {{\left\| {{z}_{1}} \right\|}^{2}} \right)}^{2}}}\left\langle {{z}_{2}},{{v}_{2}} \right\rangle  \\ 
    	& \text{  }\text{  }\text{ }+4\frac{{{\left\| {{z}_{1}} \right\|}^{2}}\cdot {{\left\| {{z}_{2}} \right\|}^{4}}}{{{\left( {{\left\| z_{1}^{2} \right\|}^{2}}\cdot {{\left\| {{z}_{2}} \right\|}^{2}}+{{\left\| z_{2}^{2} \right\|}^{2}}\cdot {{\left\| {{z}_{1}} \right\|}^{2}} \right)}^{2}}}\left\langle z_{1}^{3},{{v}_{1}} \right\rangle \\ 
    	&\text{  }\text{  } \text{ }+4\frac{{{\left\| {{z}_{1}} \right\|}^{4}}\cdot {{\left\| {{z}_{2}} \right\|}^{2}}}{{{\left( {{\left\| z_{1}^{2} \right\|}^{2}}\cdot {{\left\| {{z}_{2}} \right\|}^{2}}+{{\left\| z_{2}^{2} \right\|}^{2}}\cdot {{\left\| {{z}_{1}} \right\|}^{2}} \right)}^{2}}}\left\langle z_{2}^{3},{{v}_{2}} \right\rangle.
   \end{aligned}
    \end{equation}
    
    \subsection{Critical points of $\mathscr{B}_{av}$}
    
    Equation (\ref{the differntial of B_av }) leads to the characterization of critical points of $\mathscr{B}_{av}$, denoting the set of all critical points of $\mathscr{B}_{av}$ by $\mathscr{C}_{\mathscr{B}_{av}}$.
     
    \begin{proposition}\label{proposition}
      A point $\left(z_1,z_2\right)\in \mathscr{H}_{av}^{1}$ is a critical point of  $\mathscr{B}_{av}$, i.e.  a point $\left(z_1,z_2\right)\in \mathscr{C}_{\mathscr{B}_{av}}$ if and only if $(z_1,z_2)$ is smooth and solves the system of (uncoupled) ODEs
      
      \begin{equation}\label{uncoupled ODE}
      	 \begin{aligned}
      		& {{z}''_{1}}={{a}_{1}}{{z}_{1}}+{{b}_{1}}z_{1}^{3}\\ 
      		& {{z}''_{2}}={{a}_{2}}{{z}_{2}}+{{b}_{2}}z_{2}^{3}\\ 
      	\end{aligned} 
      \end{equation} 
  with the constants $$a_1=\frac{{{\left\| {{z}'_{1}} \right\|}^{2}}}{{{\left\| {{z}_{1}} \right\|}^{2}}}-\frac{1}{{{\left\| {{z}_{1}} \right\|}^{6}}}-\frac{{{\left\| {{z}_{2}} \right\|}^{4}}\cdot {{\left\| z_{1}^{2} \right\|}^{2}}}{2{{\left\| {{z}_{1}} \right\|}^{2}}\cdot{{\left( {{\left\| z_{1}^{2} \right\|}^{2}}\cdot {{\left\| {{z}_{2}} \right\|}^{2}}+{{\left\| z_{2}^{2} \right\|}^{2}}\cdot {{\left\| {{z}_{1}} \right\|}^{2}} \right)}^{2}} },$$  $$a_2=\frac{{{\left\| {{z}'_{2}} \right\|}^{2}}}{{{\left\| {{z}_{2}} \right\|}^{2}}}-\frac{1}{{{\left\| {{z}_{2}} \right\|}^{6}}}-\frac{{{\left\| {{z}_{1}} \right\|}^{4}}\cdot {{\left\| z_{2}^{2} \right\|}^{2}}}{2{{\left\| {{z}_{2}} \right\|}^{2}}\cdot{{\left( {{\left\| z_{1}^{2} \right\|}^{2}}\cdot {{\left\| {{z}_{2}} \right\|}^{2}}+{{\left\| z_{2}^{2} \right\|}^{2}}\cdot {{\left\| {{z}_{1}} \right\|}^{2}} \right)}^{2}} },$$
  $$b_1=\frac{{{\left\| {{z}_{2}} \right\|}^{4}}}{{{\left( {{\left\| z_{1}^{2} \right\|}^{2}}\cdot {{\left\| {{z}_{2}} \right\|}^{2}}+{{\left\| z_{2}^{2} \right\|}^{2}}\cdot {{\left\| {{z}_{1}} \right\|}^{2}} \right)}^{2}}},$$ $$b_2=\frac{{{\left\| {{z}_{1}} \right\|}^{4}}}{{{\left( {{\left\| z_{1}^{2} \right\|}^{2}}\cdot {{\left\| {{z}_{2}} \right\|}^{2}}+{{\left\| z_{2}^{2} \right\|}^{2}}\cdot {{\left\| {{z}_{1}} \right\|}^{2}} \right)}^{2}}},$$ where $a_i<0, b_i>0, i=1,2$.
    \end{proposition}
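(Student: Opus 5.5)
The plan is to read off the Euler--Lagrange equations from the differential (\ref{the differntial of B_av }) computed in Subsection \ref{section3.2}, and then bootstrap regularity.

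First I would rewrite $D\mathscr{B}_{av}[z_1,z_2](v_1,v_2)$ in weak form. The term $-\|z_i\|^2\langle z_i'',v_i\rangle$ should be read as $\|z_i\|^2\langle z_i',v_i'\rangle$, which makes sense for $z_i\in H^1$. Set $D:=\|z_1^2\|^2\|z_2\|^2+\|z_2^2\|^2\|z_1\|^2>0$. Criticality means that for all $v_1,v_2\in H^1(S^1,\mathbb{R})$ the following two identities hold separately, obtained by taking $v_2=0$ and then $v_1=0$:
\begin{equation*}
4\|z_1\|^2\langle z_1',v_1'\rangle=\Big(-4\|z_1'\|^2+\frac{4}{\|z_1\|^4}+\frac{2\|z_2\|^4\|z_1^2\|^2}{D^2}\Big)\langle z_1,v_1\rangle-\frac{4\|z_1\|^2\|z_2\|^4}{D^2}\langle z_1^3,v_1\rangle,
\end{equation*}
and the symmetric identity for $z_2$. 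Dividing by $-4\|z_1\|^2$ shows that this says exactly that $z_1''=a_1z_1+b_1z_1^3$ holds weakly, with $a_1,b_1$ as stated. The prefactor of $\langle z_1^3,v_1\rangle$ becomes $\|z_2\|^4/D^2=b_1$, and the other coefficients give $a_1$. The same computation gives the $z_2$ equation.

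Next, regularity. Since $z_i\in H^1\subset C^0$, the right-hand side $a_iz_i+b_iz_i^3$ is continuous. A weak solution of $z''=f$ with $f$ continuous is $C^2$. Iterating, $z_i\in C^k$ implies the right-hand side is $C^k$, hence $z_i\in C^{k+2}$, so $z_i$ is smooth.

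For the converse, if $(z_1,z_2)$ is smooth and solves (\ref{uncoupled ODE}) with these constants, integrating by parts on $S^1$ reverses the computation and gives $D\mathscr{B}_{av}=0$. The whole discussion is invariant under the $\mathbb{T}^2$ action, so it descends to $\mathscr{H}^1_{av}$.

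The sign claims are the step needing care. $b_i>0$ is immediate, since $\|z_j\|>0$. For $a_i<0$, I would pair the ODE with $z_i$ itself. Integration by parts gives $-\|z_i'\|^2=a_i\|z_i\|^2+b_i\|z_i^2\|^2$. Hence
\begin{equation*}
a_i=-\frac{\|z_i'\|^2+b_i\|z_i^2\|^2}{\|z_i\|^2}<0,
\end{equation*}
because $b_i\|z_i^2\|^2>0$ whenever $z_i\not\equiv0$. This identity also serves as a consistency check with the explicit formula for $a_i$: combining the two expressions yields the relation $2\|z_i'\|^2\|z_i\|^4=1/\|z_i\|^2+\dots$, which is the virial-type identity expected at critical points. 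I expect the main obstacle to be only the bookkeeping of constants in matching the explicit form of $a_i$. The sign argument itself does not depend on that bookkeeping, since it uses only the ODE and $b_i>0$.
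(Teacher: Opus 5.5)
Your proposal is correct and follows the paper's route: you read the Euler--Lagrange equations off the differential of $\mathscr{B}_{av}$ (your coefficients reproduce $a_i,b_i$ exactly) and then bootstrap. On signs you do better than the paper: pairing the ODE with $z_i$ gives $a_i=-(\|z_i'\|^2+b_i\|z_i^2\|^2)/\|z_i\|^2<0$, whereas the paper only asserts the signs by appeal to periodicity. The one slip is in your side remark, and it does not affect the proof: the resulting virial identity is $2\|z_i'\|^2\|z_i\|^2=\|z_i\|^{-2}-(\text{a positive term})$, not $2\|z_i'\|^2\|z_i\|^4=\|z_i\|^{-2}+\dots$.
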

\begin{proof}
	From the definition, the solutions $(z_1,z_2)$ of the equation(\ref{uncoupled ODE}) are periodic orbits, so the signs of all constants are naturally. $(z_1,z_2)$ is a critical point of $\mathscr{B}_{av}$ if and only if $z_1$ and $z_2$ both have weak second derivatives and satisfy equation(\ref{uncoupled ODE}). Bootstrapping these equations, we can get $z_1$ and $z_2$ are smooth, so this proposition follows.
\end{proof}

\begin{corollary}(Cieliebak, Frauenfelder and Volkov\cite{cieliebak2022variational})\label{corollary}
	Suppose that $\left(z_1,z_2\right)\in \mathscr{C}_{\mathscr{B}_{av}}$, then $z_1$ and $z_2$ have transverse zeros. In particular, their zero sets $${{Z}_{i}}=\left\{ \left. \tau \in {{S}^{1}} \right|{{z}_{i}}\left( \tau  \right)=0 \right\}(i=1,2)$$ are finite. 
\end{corollary}

\subsection{From critical points to generalized solutions}
 Let$\left(z_1,z_2\right)\in \mathscr{C}_{\mathscr{B}_{av}}$, Proposition \ref{proposition} tells us $z_1$ and $z_2$ are smooth and satisfy equation (\ref{uncoupled ODE}).
 
 We define $t_{z_i}:S^1\to S^1(i=1,2)$ as (\ref{t_z}) separately. Corollary \ref{corollary} shows the zero set of the map $z_i$ is finite, then from Lemma \ref{lemma2.1}, $t_{z_i}$ is a homeomorphism with continuous inverse $\tau_{z_i}:S^1\to S^1$. Then we can get $q_i(t):=z_i(\tau_{{{z}_{i}}})^2$ as defined in (\ref{transformation}).
 
 Then we calculate the first derivative of $q_i$,
 
 $${{\dot{q}}_{i}}\left( t \right)=2{{z}_{i}}\left( \tau_{z_i}\left( t \right) \right)\cdot {{z}'_{i}}\left( \tau_{z_i} \left( t \right) \right)\cdot {{\tau }'_{{{z}_{i}}}}\left( t \right)=\frac{2{{z}'_{i}}\left( \tau_{z_i} \left( t \right) \right)}{\left( \tau_{z_i} \left( t \right) \right){{\left\| {{z}_{i}}\left( \tau_{z_i} \left( t \right) \right) \right\|}^{2}}}{{\bar{z}}_{i}}\left( \tau_{z_i} \left( t \right) \right).$$
 
 Next we calculate the second derivative of $q_i(i=1,2)$ separately, it follows:
 \begin{equation*}
 \begin{split}
 	{{{\ddot{q}}}_{1}}&=\frac{1}{{{q}_{1}}\left( t \right)}\left( 2{{\left\| {{z}_{1}} \right\|}^{4}}\frac{{z''_1}\left( \tau  \right)}{{{z}_{1}}\left( \tau  \right)}-\frac{{{\left( \dot{q}_{1}\left( t \right) \right)}^{2}}}{2} \right) \\ 
 	& =\left( 2{{\left\| {z'_1} \right\|}^{2}}\cdot {{\left\| {{z}_{1}} \right\|}^{2}}-\frac{2}{{{\left\| {{z}_{1}} \right\|}^{2}}}-\frac{{{\left\| {{z}_{1}} \right\|}^{2}}{{\left\| {{z}_{2}} \right\|}^{4}}{{\left\| z_{1}^{2} \right\|}^{2}}}{{{\left( {{\left\| z_{1}^{2} \right\|}^{2}}\cdot {{\left\| {{z}_{2}} \right\|}^{2}}+{{\left\| z_{2}^{2} \right\|}^{2}}\cdot {{\left\| {{z}_{1}} \right\|}^{2}} \right)}^{2}}} \right)\frac{1}{{{q}_{1}}} \\ 
 	& \text{  }\text{  }\text{  }+\frac{2{{\left\| {{z}_{1}} \right\|}^{4}}{{\left\| {{z}_{2}} \right\|}^{4}}}{{{\left( {{\left\| z_{1}^{2} \right\|}^{2}}\cdot {{\left\| {{z}_{2}} \right\|}^{2}}+{{\left\| z_{2}^{2} \right\|}^{2}}\cdot {{\left\| {{z}_{1}} \right\|}^{2}} \right)}^{2}}}-\frac{\dot{q}_{1}^{2}}{2{{q}_{1}}} \\ 
 	& =\left( 2{{\left\| {z'_1} \right\|}^{2}}\cdot {{\left\| {{z}_{1}} \right\|}^{2}}-\frac{2}{{{\left\| {{z}_{1}} \right\|}^{2}}}-\frac{\frac{{{\left\| z_{1}^{2} \right\|}^{2}}}{{{\left\| {{z}_{1}} \right\|}^{2}}}}{{{\left( \frac{{{\left\| z_{1}^{2} \right\|}^{2}}}{{{\left\| {{z}_{1}} \right\|}^{2}}}+\frac{{{\left\| z_{2}^{2} \right\|}^{2}}}{{{\left\| {{z}_{2}} \right\|}^{2}}} \right)}^{2}}}-\frac{\dot{q}_{1}^{2}}{2} \right)\frac{1}{{{q}_{1}}}+\frac{2}{{{\left( \frac{{{\left\| z_{1}^{2} \right\|}^{2}}}{{{\left\| {{z}_{1}} \right\|}^{2}}}+\frac{{{\left\| z_{2}^{2} \right\|}^{2}}}{{{\left\| {{z}_{2}} \right\|}^{2}}} \right)}^{2}}} \\ 
 	& =\left( \frac{{{\left\| {{{\dot{q}}}_{1}} \right\|}^{2}}}{2}-\int_{0}^{1}{\frac{2}{{{q}_{1}}\left( s \right)}ds-\frac{{{{\bar{q}}}_{1}}}{{{\left( {{{\bar{q}}}_{1}}+{{{\bar{q}}}_{2}} \right)}^{2}}}-\frac{\dot{q}_{1}^{2}}{2}} \right)\frac{1}{{{q}_{1}}}+\frac{2}{{{\left( {{{\bar{q}}}_{1}}+{{{\bar{q}}}_{2}} \right)}^{2}}}, \\ 
 \end{split}
\end{equation*}
 $${{{\ddot{q}}}_{2}}=\left( \frac{{{\left\| {{{\dot{q}}}_{2}} \right\|}^{2}}}{2}-\int_{0}^{1}{\frac{2}{{{q}_{2}}\left( s \right)}ds-\frac{{{{\bar{q}}}_{2}}}{{{\left( {{{\bar{q}}}_{1}}+{{{\bar{q}}}_{2}} \right)}^{2}}}-\frac{\dot{q}_{2}^{2}}{2}} \right)\frac{1}{{{q}_{2}}}+\frac{2}{{{\left( {{{\bar{q}}}_{1}}+{{{\bar{q}}}_{2}} \right)}^{2}}}.$$
 
 Thus $q_i$ satisfies the ODE \begin{equation}\label{the equation of q_i}
 	{{{\ddot{q}}}_{i}}=(c_i-\frac{\dot{q}_{i}^{2}}{2})\frac{1}{{{q}_{i}}}+\frac{2}{{{\left( {{{\bar{q}}}_{1}}+{{{\bar{q}}}_{2}} \right)}^{2}}}
 \end{equation} with the constant 
 	\begin{equation}\label{the equation of c_i}
 		c_i=\frac{{{\left\| {{{\dot{q}}}_{i}} \right\|}^{2}}}{2}-\int_{0}^{1}\frac{2}{{{q}_{i}}\left( s \right)}ds-\frac{{{{\bar{q}}}_{i}}}{{{\left( {{{\bar{q}}}_{1}}+{{{\bar{q}}}_{2}} \right)}^{2}}}.
 	\end{equation}

 At the global maximum $t_{i}^{max}$ of $q_i$, the equation of $q_i$ becomes $\frac{c_i}{{{q}_{i}(t_{i}^{max})}}+\frac{2}{{{\left( {{{\bar{q}}}_{1}}+{{{\bar{q}}}_{2}} \right)}^{2}}}={{{\ddot{q}}}_{i}}(t_{i}^{max})\le 0,$ so $c_i\le -\frac{2q_i(t_{i}^{max})}{{{\left( {{{\bar{q}}}_{1}}+{{{\bar{q}}}_{2}} \right)}^{2}}}. $
 
 Now consider the smooth map ${\beta}_{i}:=\frac{{{{\ddot{q}}}_{i}}-\frac{1}{{{\left( {{{\bar{q}}}_{1}}+{{{\bar{q}}}_{2}} \right)}^{2}}}}{{{q}_{i}}}:\left( {{t}_{i}}^{-},{{t}_{i}}^{+} \right)\to \mathbb{R},$ where ${{t}_{i}}^{-}<{{t}_{i}}^{+}$ are two adjacent zeros of $q_i$. Then we can obtain \begin{equation}\label{the equation of beta_i}
 	{{\beta }_{i}}q_{i}^{2}={{c}_{i}}-\frac{\dot{q}_{i}^{2}}{2}+\frac{{{q}_{i}}}{{{\left( {{{\bar{q}}}_{1}}+{{{\bar{q}}}_{2}} \right)}^{2}}}.
 \end{equation} Because $q_i
 \le q_i(t_{i}^{max})$, thus we have ${{\beta }_{i}}q_{i}^{2}\le-\frac{\dot{q}_{i}^{2}}{2}-\frac{q_i(t_{i}^{max})}{{{\left( {{{\bar{q}}}_{1}}+{{{\bar{q}}}_{2}} \right)}^{2}}}<0$, then we know on $\left( {{t}_{i}}^{-},{{t}_{i}}^{+} \right)$, ${\beta_{i}}<0$.
 
 Differentiating (\ref{the equation of beta_i}), it comes to ${{\dot{\beta }}_{i}}q_{i}^{2}+2{{\beta }_{i}}{{q}_{i}}{{\dot{q}}_{i}}=-{{\ddot{q}}_{i}}{{\dot{q}}_{i}}+\frac{{{{\dot{q}}}_{i}}}{{{\left( {{{\bar{q}}}_{1}}+{{{\bar{q}}}_{2}} \right)}^{2}}}=-{{\beta }_{i}}{{q}_{i}}{{\dot{q}}_{i}}$ and due to $q_i>0$, we have ${{\dot{\beta }}_{i}}q_{i}=-3{{\beta }_{i}}{\dot{q}_{i}}.$ By integrating both sides of this equality, we conclude $\beta_{i}=-\frac{\mu_i}{q_{i}^{3}}$ on  $\left( {{t}_{i}}^{-},{{t}_{i}}^{+} \right)$ for some constant $\mu_i>0$. Therefore for $t\in\left( {{t}_{i}}^{-},{{t}_{i}}^{+} \right)$, we have ${{{\ddot{q}}}_{i}}=-\frac{{\mu}_{i}}{{q_i(t)}^{2}}+\frac{1}{{{\left( {{{\bar{q}}}_{1}}+{{{\bar{q}}}_{2}} \right)}^{2}}}.$
 
 Then for $t\in\left( {{t}_{i}}^{-},{{t}_{i}}^{+} \right)$, we can get ${{\mu }_{i}}=-\left( \frac{{{\left\| {{{\dot{q}}}_{i}} \right\|}^{2}}}{2}-\int_{0}^{1}{\frac{2}{{{q}_{i}}\left( s \right)}ds-\frac{{{{\bar{q}}}_{i}}}{{{\left( {{{\bar{q}}}_{1}}+{{{\bar{q}}}_{2}} \right)}^{2}}}-\frac{{{{\dot{q}}}_{i}}{{\left( t \right)}^{2}}}{2}} \right){{q}_{i}}\left( t \right)-\frac{{{q}_{i}}{{\left( t \right)}^{2}}}{{{\left( {{{\bar{q}}}_{1}}+{{{\bar{q}}}_{2}} \right)}^{2}}}.$ In particular, ${{\mu }_{i}}\text{=}\underset{{{t}_{i}}\to t_{i}^{\pm }}{\mathop{\lim }}\,\frac{{{{\dot{q}}}_{i}}{{\left( t \right)}^{2}}{{q}_{i}}\left( t \right)}{2}=2{{\left\| {{z}_{i}} \right\|}^{4}}{{z'_i}}{{\left( {{\tau }_{{{z}_{i}}}}\left( t_{i}^{\pm } \right) \right)}^{2}}.$
 
Then we get $\frac{{\mu}_{i}}{q_i(t)}=- \frac{{{\left\| {{{\dot{q}}}_{i}} \right\|}^{2}}}{2}+\int_{0}^{1}{\frac{2}{{{q}_{i}}\left( s \right)}ds+\frac{{{{\bar{q}}}_{i}}}{{{\left( {{{\bar{q}}}_{1}}+{{{\bar{q}}}_{2}} \right)}^{2}}}+\frac{{{{\dot{q}}}_{i}}{{\left( t \right)}^{2}}}{2}} -\frac{{{q}_{i}}{{\left( t \right)}}}{{{\left( {{{\bar{q}}}_{1}}+{{{\bar{q}}}_{2}} \right)}^{2}}},$ and integrating this yields $$\mu_i \int_{0}^{1}{\frac{1}{{{q}_{i}}\left( t \right)}dt=2\int_{0}^{1}{\frac{1}{{{q}_{i}}\left( s \right)}ds}},$$ so we get $\mu_i=2$.
 
Since $\mu_i=2$, for $i=1,2$, \begin{equation*}
	\begin{aligned}
		E_i(t)&=\frac{{{\dot{q}_i}(t)}^2}{2}-\frac{2}{q_i(t)}-\frac{q_i(t)}{(\bar{q}_1+\bar{q}_2)^2}\\
		&=\frac{{{\left\| {{{\dot{q}}}_{i}} \right\|}^{2}}}{2}-\int_{0}^{1}\frac{2}{q_i(s)}ds-\frac{\bar{{{q}_{i}}}}{(\bar{q}_1+\bar{q}_2)^2},\\
	\end{aligned}
\end{equation*}
 the right-hand side are continuous functions of $t\in\left[0,1\right]$, then we can see the continuity of $E_i$.
 
Therefore we recover the original equation (\ref{main equation of mean interaction}), and it shows that the critical points of $\mathscr{B}_{av}$ that we get are generalized solutions of the original equation  (\ref{main equation of mean interaction}).

\subsection{From generalized solutions to critical points}
Let $(q_1,q_2)\in H^1\left( S^1,\mathbb{R}_{\ge 0}^2\right)$ be a generalized solution of (\ref{main equation of mean interaction}). Integrating the constant energy of $q_i$, then $E_i=\int_{0}^{1}\frac{\dot{q}_{i}(t)^2}{2}dt-\int_{0}^{1}\frac{2}{q_{i}(t)}dt-\frac{\bar{{{q}_{i}}}}{(\bar{q}_1+\bar{q}_2)^2}$. Because $q_i\in H^1$, the first term is finite, and the last term is also finite from its definition, then it follows that $\int_{0}^{1}\frac{1}{q_{i}(t)}dt<\infty$.

We associate $q_i$ the time reparameterization as in (\ref{tauq}) and define $z_i(\tau)^2:=q_i(t_{q_i}(\tau))$ as in (\ref{inverse transformation}). Lemma \ref{Lemma2.3} implies the set ${\Phi_{LC}^{-1}}\left( {{q}_{1}} \right)\times {\Phi_{LC}^{-1}}\left( {{q}_{2}} \right)$ consists of four elements. Let $Z_{z_i}:={z_i}^{-1}(0)(i=1,2)$.

\begin{lemma}
	Any $(z_1,z_2)\in {\Phi_{LC}^{-1}}\left( {{q}_{1}} \right)\times {\Phi_{LC}^{-1}}\left( {{q}_{2}} \right)$ satisfies the critical point equation (\ref{uncoupled ODE}) on the complement of the set $Z_{z_1}\cup Z_{z_2}$.\end{lemma}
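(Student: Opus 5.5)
Fix $(z_1,z_2)\in \Phi_{LC}^{-1}(q_1)\times\Phi_{LC}^{-1}(q_2)$ and work on one connected component of $S^1\setminus Z_{z_i}$, where $z_i\neq 0$ and $\tau_{z_i}$ is a $C^1$ diffeomorphism onto a component of $S^1\setminus Z_{q_i}$. The plan is to run the computation of Section \ref{section2} backwards. The identity
\[
\ddot q_i(t)=\frac{1}{q_i(t)}\left(\frac{2\|z_i\|^4 z_i''(\tau)}{z_i(\tau)}-\frac{\dot q_i(t)^2}{2}\right),\qquad \tau=\tau_{z_i}(t),
\]
can be solved for $z_i''$, giving
\[
z_i''(\tau)=\frac{z_i(\tau)}{2\|z_i\|^4}\left(q_i\ddot q_i+\frac{\dot q_i^2}{2}\right)(t).
\]
By Lemma 2.3(b), $z_i$ is smooth off its zeros because $q_i$ is, so this formula holds pointwise there.

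Next I would substitute the equation (\ref{main equation of mean interaction}) and the energy. From $\ddot q_i=-2/q_i^2+1/(\bar q_1+\bar q_2)^2$ and $\dot q_i^2/2=E_i+2/q_i+q_i/(\bar q_1+\bar q_2)^2$, we get
\[
q_i\ddot q_i+\frac{\dot q_i^2}{2}=E_i+\frac{2q_i}{(\bar q_1+\bar q_2)^2}.
\]
On each arc the energy $E_i$ is constant, since differentiating it and using the ODE gives zero. By condition (3) it is continuous on $S^1$, so it is a single global constant $E_i$. Hence
\[
z_i''=\frac{E_i}{2\|z_i\|^4}\,z_i+\frac{1}{\|z_i\|^4(\bar q_1+\bar q_2)^2}\,z_i^3,
\]
using $q_i=z_i^2$. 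This already has the form (\ref{uncoupled ODE}). It remains to check that the coefficients are exactly the $a_i,b_i$ of Proposition \ref{proposition}.

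For $b_i$ I would use $\bar q_i=\|z_i^2\|^2/\|z_i\|^2$. Then
\[
\frac{1}{\|z_1\|^4(\bar q_1+\bar q_2)^2}=\frac{\|z_2\|^4}{\left(\|z_1^2\|^2\|z_2\|^2+\|z_2^2\|^2\|z_1\|^2\right)^2}=b_1,
\]
and symmetrically for $b_2$.

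For $a_i$, I expect the real content of the proof to be identifying the constant $E_i$ in terms of $z$. Integrating the energy over $[0,1]$, which is legitimate since $\int 1/q_i<\infty$ as shown above, gives
\[
E_i=\frac{\|\dot q_i\|^2}{2}-\int_0^1\frac{2}{q_i}\,dt-\frac{\bar q_i}{(\bar q_1+\bar q_2)^2}.
\]
I would then convert each term using the formulas of Section \ref{section2}: $\|\dot q_i\|^2=4\|z_i\|^2\|z_i'\|^2$, $\int_0^1 dt/q_i=1/\|z_i\|^2$, and the expression for $\bar q_i$. Dividing by $2\|z_i\|^4$ should give
\[
\frac{\|z_i'\|^2}{\|z_i\|^2}-\frac{1}{\|z_i\|^6}-\frac{\bar q_i}{2\|z_i\|^4(\bar q_1+\bar q_2)^2}.
\]
The last term equals the third summand in $a_i$ after clearing denominators. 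The step requiring the most care is this bookkeeping, which should follow the same algebra as in the forward direction.
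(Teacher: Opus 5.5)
Your proposal is correct and is essentially the paper's argument. Both proofs reduce to the identity $q_i\ddot q_i+\tfrac12\dot q_i^2=c_i+2q_i/(\bar q_1+\bar q_2)^2$, where $c_i$ is the integrated energy, and then pull it back through the Levi-Civita formulas of Section 2; the paper reaches that identity through the auxiliary function $\beta_i$, which is energy conservation in disguise. Your version is in fact more complete: you use condition (3) to make the energy a single constant across the collisions, which the paper's ``integrate from $0$ to $t$'' tacitly needs, and you explicitly check that the resulting coefficients are exactly $a_i,b_i$, a step the paper only asserts.
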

\begin{proof}
	The equation of $q_i$ is ${{{\ddot{q}}}_{i}}=-\frac{2}{{q_i(t)}^{2}}+\frac{1}{{{\left( {{{\bar{q}}}_{1}}+{{{\bar{q}}}_{2}} \right)}^{2}}}.$ 
	
	Set ${\beta}_{i}:=\frac{{{{\ddot{q}}}_{i}}-\frac{1}{{{\left( {{{\bar{q}}}_{1}}+{{{\bar{q}}}_{2}} \right)}^{2}}}}{{{q}_{i}}}$ on ${{S}^{1}}\backslash {{Z}_{{{q}_{i}}}}$, then we have $\beta_{i}=-\frac{2}{q_i^3}$. The derivative of $\beta_{i}$ with respect to time is ${{\dot{\beta }}_{i}}=-\frac{3{{\beta }_{i}}{{{\dot{q}}}_{i}}}{{{q}_{i}}}$, we multiply both sides by $q_i^2$, then we get ${{\dot{\beta }}_{i}}q_{i}^{2}=-3{{\beta }_{i}}{{\dot{q}}_{i}}{{q}_{i}}$ which equals to ${{\dot{\beta }}_{i}}q_{i}^{2}+2{{\beta }_{i}}{{\dot{q}}_{i}}{{q}_{i}}=-{{\beta }_{i}}{{\dot{q}}_{i}}{{q}_{i}}.$
	
	By the definition of $\beta_{i}$, it becomes ${{\dot{\beta }}_{i}}q_{i}^{2}+2{{\beta }_{i}}{{\dot{q}}_{i}}{{q}_{i}}=-{{\dot{q}}_{i}}{{\ddot{q}}_{i}}+\frac{{{{\dot{q}}}_{i}}}{{{\left( {{{\bar{q}}}_{1}}+{{{\bar{q}}}_{2}} \right)}^{2}}}$, integrating both sides of this from 0 to $t$ we have ${\beta _{i}}q_{i}^{2}=C_i-\frac{\dot{q}_{i}^{2}}{2}+\frac{{{q}_{i}}}{{{\left( {{{\bar{q}}}_{1}}\text{+}{{{\bar{q}}}_{2}} \right)}^{2}}}$, i.e. ${{\ddot{q}}_{i}}{{q}_{i}}=C_i-\frac{\dot{q}_{i}^{2}}{2}+\frac{2{{q}_{i}}}{{{\left( {{{\bar{q}}}_{1}}\text{+}{{{\bar{q}}}_{2}} \right)}^{2}}}$. Use the definition of $q_i$, i.e. equation (\ref{main equation of mean interaction}), we have $C_i=\frac{\dot{q}_{i}^{2}}{2}-\frac{2}{{{q}_{i}}}-\frac{{{q}_{i}}}{{{\left( {{{\bar{q}}}_{1}}\text{+}{{{\bar{q}}}_{2}} \right)}^{2}}}$ and integrating this from 0 to 1 we can get $C_i=\frac{{{\left\| {{{\dot{q}}}_{i}} \right\|}^{2}}}{2}-\int_{0}^{1}{\frac{2}{{{q}_{i}}\left( s \right)}ds}-\frac{{{{\bar{q}}}_{i}}}{{{\left( {{{\bar{q}}}_{1}}\text{+}{{{\bar{q}}}_{2}} \right)}^{2}}}$. This is exactly the constant $c_i$ in (\ref{the equation of c_i}).
	
	And we modulo $q_i$ of ${{\ddot{q}}_{i}}{{q}_{i}}=C_i-\frac{\dot{q}_{i}^{2}}{2}+\frac{2{{q}_{i}}}{{{\left( {{{\bar{q}}}_{1}}\text{+}{{{\bar{q}}}_{2}} \right)}^{2}}}$, we get (\ref{the equation of q_i}). Because $z_i(\tau)^2:=q_i(t_{q_i}(\tau))$ as in (\ref{inverse transformation}) and  $z_i$ changed its sign at each zero, then we can finally get $z_i$ is smooth and satisfying equation (\ref{uncoupled ODE}).

\end{proof}

Therefore, we can get Theorem \ref{Main1}.

Next, we discuss situations when generalized solutions have an odd number of zeros.
%给出奇数个奇点时候的定理
\begin{thm}{(Cieliebak, Frauenfelder and Volkov \cite{cieliebak2022variational})}\label{theorem 3.5}
	Under the Levi-Civita transformation (\ref{ Levi-Civita transformations2}) with time change (\ref{time change2}), critical points $z_i$ of the functionals $\bar{\mathscr{D}}(z_i):{H}^{1}_{i,twist}(S^1,\mathbb{R})\backslash \left\{0\right\}\to \mathbb{R}$ on twisted loops are in 2-to-1 correspondence with generalized solutions $q_i:S^1\to \mathbb{R}_{\ge 0}$ of (\ref{main equation of mean interaction}) having an odd number of zeros, where $i=1,2$, ${H}^{1}_{i,twist}(S^1,\mathbb{R}):=\left\{ \left. {{z}_{i}}\in {{H}^{l}}\left( {\mathbb{R}}/{2\mathbb{Z}}\;,\mathbb{R} \right) \right|{{z}_{i}}\left( \tau +1 \right)=-{{z}_{i}}\left( \tau  \right)\text{ for all }\tau  \right\}$, $l\in {{\mathbb{N}}_{0}}$, and $\bar{\mathscr{D}}(z_i)$ have same form of expression as in (\ref{Lagrangian}).
\end{thm}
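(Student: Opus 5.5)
The plan is to run the even-zero argument of the previous two subsections again, replacing loops on $S^1=\mathbb{R}/\mathbb{Z}$ by twisted loops on $\mathbb{R}/2\mathbb{Z}$. Suppose $q_i$ has an odd number $m$ of zeros. A sign-switching square root $z_i$ with $z_i^2=q_i\circ t_{q_i}$ cannot close up after one period, because it changes sign an odd number of times. It does close up after two periods and satisfies $z_i(\tau+1)=-z_i(\tau)$.

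\textbf{Step 1: the twisted Levi-Civita map.} I first set up the analogue of Lemma \ref{lemma2.2}. Define $t_{z_i}$ by (\ref{t_z}) with the $L^2$-norm taken over $[0,1]$. The integrand $z_i^2$ is $1$-periodic, so $t_{z_i}$ descends to a homeomorphism of $S^1$, by the same argument as Lemma \ref{lemma2.1}. Then $q_i(t):=z_i(\tau_{z_i}(t))^2$ is $1$-periodic and has precisely $m$ zeros on $S^1$.

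Conversely, given $q_i$ with $\int_0^1 ds/q_i<\infty$ and $m$ odd, define $\tau_{q_i}$ by (\ref{tauq}). Take $z_i=\pm\sqrt{q_i\circ t_{q_i}}$ with a sign switch at each zero, extended to $[0,2]$. After $m$ switches the extension satisfies $z_i(\tau+1)=-z_i(\tau)$, so it lies in $H^1_{i,twist}$. The only freedom is a global sign, which gives the 2-to-1 statement.

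The regularity statements (a)--(f) and Corollary \ref{Lemma2.3} are local near zeros. They therefore transfer verbatim.

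\textbf{Step 2: critical points give generalized solutions.} For $z_i$ in the twisted space, the functional $\bar{\mathscr{D}}$ has the form (\ref{Lagrangian}), possibly together with the mean-interaction term. Its differential has the same form as in Section \ref{section3.2}. Here the test functions $v_i$ are twisted, and integration by parts is legitimate because the product $z_i'v_i$ is $1$-periodic.

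The critical point equation is then (\ref{uncoupled ODE}), as in Proposition \ref{proposition}. Zeros are transverse, as in Corollary \ref{corollary}. From here I repeat the computation of the $\beta_i$ argument essentially unchanged:
\begin{itemize}
\item derive (\ref{the equation of q_i}) with the constant $c_i$;
\item show that $\beta_i=-\mu_i/q_i^3$ on each interval between adjacent zeros;
\item integrate over one period to get $\mu_i=2$;
\item conclude continuity of $E_i$.
\end{itemize}
All these integrals are over $[0,1]$ of $1$-periodic quantities such as $q_i$ and $1/q_i$, so twisting does not enter.

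\textbf{Step 3: generalized solutions give critical points.} This is the argument of the preceding lemma. The energy identity gives $\int_0^1 1/q_i<\infty$. Then $\beta_i$ reproduces (\ref{the equation of q_i}) with $C_i=c_i$. Hence the sign-switching lift $z_i$ satisfies (\ref{uncoupled ODE}) away from its zeros.

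The main obstacle is to show that $z_i$ is smooth across its zeros, and hence a genuine critical point in $H^1_{i,twist}$. I would handle this as in the even case, via (c)--(f). Continuity of $E_i$ forces the limit $\lim s_*\sqrt{q_i}\,\dot q_i$ to exist and be positive. So $z_i$ is $C^1$ with transverse zeros, and both one-sided solutions of the second-order ODE (\ref{uncoupled ODE}) share the same data $(0,z_i')$ at the zero. ODE uniqueness then glues them into a smooth solution on $\mathbb{R}/2\mathbb{Z}$, which is automatically twisted.

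Finally I must make sure the mean term, written through $\bar q_i=\|z_i^2\|^2/\|z_i\|^2$ over $[0,1]$, is well defined on twisted loops. It is, since $z_i^2$ is $1$-periodic.
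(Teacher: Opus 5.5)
Your plan follows the route the paper implicitly takes. The paper gives no proof of this statement: it is quoted from Cieliebak--Frauenfelder--Volkov, and the twisted theorem that follows is obtained ``in analogy with Theorem~\ref{Main1}''. You carry out that analogy by rerunning the even-case argument of Section 3 on twisted loops. The two observations you isolate are exactly what that analogy needs. First, an odd number of sign switches forces $z_i(\tau+1)=-z_i(\tau)$. Second, a product of two twisted functions is $1$-periodic, so every $[0,1]$-integral and the integration by parts in the differential are unaffected. Step 1 and the regularity/gluing argument in Step 3 are fine.

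What you must fix is the functional. You leave it as ``the form (\ref{Lagrangian}), possibly together with the mean-interaction term'', and your argument depends on which choice is made.

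\begin{itemize}
\item If $\bar{\mathscr{D}}$ literally has the form (\ref{Lagrangian}), its Euler--Lagrange equation is $z_i''=a_iz_i$ with $a_i=\|z_i'\|^2/\|z_i\|^2-1/\|z_i\|^6$ and no cubic term. Your $\beta_i$ computation then gives $\ddot q_i=-2/q_i^2$, which is the Kepler problem, not (\ref{main equation of mean interaction}).
\item So your Step 2 claim that ``the critical point equation is then (\ref{uncoupled ODE})'' is false for that functional. The same applies to Step 3, where the lift of a solution of (\ref{main equation of mean interaction}) is not a critical point of the pure Kepler functional.
\end{itemize}

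You need to add $\mathscr{A}(z_1,z_2)$ and then say what $z_j$ does:
\begin{itemize}
\item freeze it, so $\bar q_j=\|z_j^2\|^2/\|z_j\|^2$ is a parameter and the count is $2$-to-$1$ per component; or
\item vary the pair, so the count is $4$-to-$1$, as in the theorem that follows.
\end{itemize}
This ambiguity comes from the statement itself, but your proof proves a definite result only once you make this choice explicitly.

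A smaller point in Step 2. A priori $\beta_i=-\mu_{i,k}/q_i^3$ with a constant $\mu_{i,k}$ that depends on the interval $I_k$ between adjacent zeros. Integrating over a period then only gives
\[
\sum_k\mu_{i,k}\int_{I_k}q_i^{-1}\,dt=2\int_0^1q_i^{-1}\,dt .
\]
Before concluding $\mu_i=2$, you must show all $\mu_{i,k}$ agree. One way is the limit identity $\mu_{i,k}=2\|z_i\|^4(z_i'(\zeta))^2$ at both endpoint zeros $\zeta$ of $I_k$. Equivalently, use conservation of $\frac12(z_i')^2-\frac{a_i}{2}z_i^2-\frac{b_i}{4}z_i^4$ along the twisted solution.
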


Now, for the $e^{-}Z^{2+}e^{-}$ Helium, there are four configurations. Without losing generality, let's assume the left electron undergoes an odd number of collisions and the right one undergoes an even number of collisions, others can be similar.

Then we can introduce for each $l\in {{\mathbb{N}}_{0}}$, the Hilbert space of twisted loops $${H}^{l}_{twist}(S^1,\mathbb{R}):=\left\{ \left. z=(z_1,z_2)\in {{H}^{l}}\left( {\mathbb{R}}/{2\mathbb{Z}}\;,\mathbb{R} \right) \right|{{z}_{1}}\left( \tau +1 \right)=-{{z}_{1}}\left( \tau  \right),{{z}_{2}}\left( \tau +1 \right)=-{{z}_{2}}\left( \tau  \right)\text{ for all }\tau  \right\}$$ with the inner product $$\left\langle z,v \right\rangle :=\sum\limits_{i=1}^{2}{\int_{0}^{1}{z\left( \tau  \right)v\left( \tau  \right)}}d\tau =\frac{1}{2}\sum\limits_{i=1}^{2}{\int_{0}^{2}{z\left( \tau  \right)v\left( \tau  \right)}}d\tau .$$
%这里也要模掉torus action来定义吗？

Then we define $$\bar{\mathscr{H}}_{av}^{1}:=\left\{ \left. z=\left( {{z}_{1}},{{z}_{2}} \right)\in H_{twist}^{1}\left( {{S}^{1}},{{\mathbb{R}}^{2}} \right) \right|\left\| {{z}_{1}} \right\|>0,\left\| {{z}_{2}} \right\|>0 \right\}.$$

Therefore, from Theorem \ref{theorem 3.5} and in analogy with Theorem \ref{Main1}, we can obtain the twisted version of Theorem \ref{Main1}.

\begin{thm}
	Under the Levi-Civita transformations (\ref{ Levi-Civita transformations2}) with time change (\ref{time change2}), critical points $(z_1,z_2)$ of the action functional $\bar{\mathscr{B}_{av}}:\tilde{\mathscr{H}}_{av}^{1}\to \mathbb{R}$ on twisted loops are in 4-to-1 correspondence with generalized solutions $(q_1,q_2)$ of (\ref{main equation of mean interaction}) with the left electron having an odd number of zeros and the right electron having an even number of zeros, where $\tilde{\mathscr{B}_{av}}$ has the same form of expression as in (\ref{action functional of mean helium}).
\end{thm}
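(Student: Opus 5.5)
The plan is to repeat the proof of Theorem \ref{Main1}, replacing the untwisted loop space by the mixed space. The left component $z_1$ is a twisted loop, $z_1(\tau+1)=-z_1(\tau)$, so it has an odd number of zeros on $[0,1)$. The right component $z_2$ is an ordinary loop with an even number of zeros. All norms are taken over one period $[0,1]$, using the inner product defined above for twisted loops. The point to exploit is that every term of $\bar{\mathscr{B}}_{av}$ depends only on the quantities $\|z_i\|^2$, $\|z_i'\|^2$ and $\|z_i^2\|^2$. Each of these is invariant under $z_1\mapsto -z_1$ and is well defined on twisted loops. Hence the differential computed in Section \ref{section3.2} carries over verbatim, with test directions $v_1$ in $H^1_{twist}$ and $v_2$ in $H^1(S^1,\mathbb{R})$.

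\textbf{Step 1: critical points.} Since $z_1'',z_1,z_1^3$ are all odd under the shift $\tau\mapsto\tau+1$, pairing against twisted test functions recovers the Euler--Lagrange equation pointwise. The analogue of Proposition \ref{proposition} therefore holds: the critical points are exactly the smooth solutions of (\ref{uncoupled ODE}) with $z_1$ twisted and $z_2$ periodic, with the same constants $a_i,b_i$. Corollary \ref{corollary} is a local ODE argument, so it gives transverse and finitely many zeros in this setting as well.

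\textbf{Step 2: critical points to generalized solutions.} For $z_1$ I would apply the twisted Levi-Civita map of Theorem \ref{theorem 3.5}: define $t_{z_1}$ by (\ref{t_z}) with integration over one period, and set $q_1=z_1(\tau_{z_1})^2$. Because $z_1^2$ is $1$-periodic, $q_1$ is a genuine loop with an odd number of zeros. For $z_2$ I would use the untwisted map of Lemma \ref{lemma2.2}. The computation of $\ddot q_i$, the auxiliary function $\beta_i$, the relation $\beta_i=-\mu_i/q_i^3$ and the normalization $\mu_i=2$ only use quantities on one period and the local behaviour of $\beta_i$ between adjacent zeros. They therefore repeat word for word, giving (\ref{main equation of mean interaction}) together with continuity of the energies $E_i$.

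\textbf{Step 3: generalized solutions to critical points, and counting.} Given a generalized solution $(q_1,q_2)$ with $q_1$ having an odd and $q_2$ an even number of zeros, finiteness of $\int 1/q_i$ follows from the energy argument as before. Theorem \ref{theorem 3.5} supplies exactly two twisted lifts $\pm z_1$, and Corollary \ref{Lemma2.3} supplies exactly two lifts $\pm z_2$, so there are four preimages. The lemma of the previous subsection then shows each preimage satisfies (\ref{uncoupled ODE}) away from the zeros. Smoothness across the zeros comes from the sign switch at each zero together with the continuity of energy, as in Lemma 2.3(d)--(f).

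The step I expect to be the main obstacle is the bookkeeping of the two time changes $\tau_1,\tau_2$ together with the mean term $\bar q_1+\bar q_2$. One must check that $\bar q_1=\|z_1^2\|^2/\|z_1\|^2$ still holds when $z_1$ is twisted. The check I would use is that $t_{z_1}$ maps $[0,1]$ onto $[0,1]$, because $z_1^2$ has period $1$. Once this identity is verified, the coupling constant is the same as in the untwisted case and the remaining arguments are unchanged.
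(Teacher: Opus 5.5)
Your proposal is correct and follows the same route as the paper, which proves this statement in one line: it invokes the twisted Levi-Civita correspondence (Theorem \ref{theorem 3.5}) for the odd-collision electron and repeats the proof of Theorem \ref{Main1}; your write-up supplies the bookkeeping the paper omits. Note that your mixed space ($z_1$ twisted, $z_2$ an ordinary loop) is the one that actually matches the statement, whereas the paper's $H^l_{twist}$ twists both components, which would correspond to both zero sets being odd.
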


\subsection{Hamiltonian Formulation}
  We apply an abstract Legendre transform , which is established from \cite{cieliebak2022variational}, to the $e^{-}Z^{2+}e^{-}$ configuration in the Helium atom. We abbreviate $H^l:=H^l(S^1, {\mathbb{R}}^n)$, $l\in {\mathbb{N}}_0$, the derivative of $q\in H^1$ is denoted by $\dot{q}$. 
  
  Assume now we have an open subset $\mathcal{U}^1\subset H^1$ and a Lagrange function $$\mathscr{L}: \mathcal{U}^1 \times H^0 \to \mathbb{R}, (q,v)\mapsto \mathscr{L}(q,v).$$ 
  
  If $\mathscr{L}$ is of class of $C^1$ and there exists a continuous map: $$\nabla \mathscr{L}=(\nabla_{1} \mathscr{L}, \nabla_{2} \mathscr{L}): \mathcal{U}^1 \times H^0 \to H^0 \times H^0,$$ which is uniquely defined by the conditions that for all $\omega\in H^1$, $\left\langle {\nabla_{i} \mathscr{L}(q,v), \omega} \right\rangle=D_{i}\mathscr{L}(q,v)\omega$. Where $\left\langle{,}\right\rangle$ is the $L^2$-inner product and $D_{i}\mathscr{L}$ denotes the derivatives with regard to the $i$-th variable. Then we say $\mathscr{L}$ possesses a continuous $L^2$-gradient.
  
  To such $\mathscr{L}$, we associate its Lagrangian action: $\mathcal{S}_{\mathscr{L}}:\mathcal{U}^1\to \mathbb{R}, q\mapsto \mathscr{L}(q,\dot{q}).$ Through computations, $q\in \mathcal{U}^1$ is a critical point of $\mathcal{S}_{\mathscr{L}}$ if and only if $\nabla_{2} \mathscr{L}(q,\dot{q})\in H^1$ and there is a following Euler-Lagrange equation holds: $$\frac{d}{dt}\nabla_{2} \mathscr{L}(q,\dot{q})=\nabla_{1} \mathscr{L}(q,\dot{q}).$$
  
  We add a further condition on $\mathscr{L}$. 
  
  (L): There exists a differentiable map $W:\mathcal{U}^1 \times H^0 \to H^0 \times H^0, (q,p)\mapsto W(q,p)$ such that for each $q\in \mathcal{U}^1$, the map $H^0\to H^0, v\mapsto \nabla_{2} \mathscr{L}(q,v)$ is a homeomorphism with inverse $p\mapsto W(q,p)$.
 Particularly, we have $\nabla_{2} \mathscr{L}(q,W(q,p))=p$. 
 
 We give a fibrewise Legendre transform associated to $\mathscr{L}$: $$\mathscr{H}:\mathcal{U}^1 \times H^0 \to \mathbb{R}, \mathscr{H}(q,p):=\left\langle{p, W(q,p)}\right\rangle-\mathscr{L}(q, W(q,p)).$$Then $\mathscr{H}$ has a continuous $L^2$-gradient related to $\mathscr{L}$ by \begin{equation*}
  	\begin{aligned}
  		&\nabla_{1} \mathscr{H}(q,p)=-\nabla_{1} \mathscr{L}(q, W(q,p)),\\
  		&\nabla_{2} \mathscr{H}(q,p)=W(q,p).\\
  	\end{aligned}
  \end{equation*}

   We can associate any Hamilton function $\mathscr{H}:\mathcal{U}^1 \times H^0 \to \mathbb{R}$ its Hamiltonian action $$ \mathscr{A}_{\mathscr{H}}:\mathcal{U}^1 \times H^0 \to \mathbb{R}, \mathscr{A}_{\mathscr{H}}(q,p):=\left\langle{p,\dot{q}}\right\rangle-\mathscr{H}(q,p)$$ with continuous $L^2$-gradient. 
   
   By calculating, $q\in \mathcal{U}^1$ is a critical point $\mathscr{A}_{\mathscr{H}}$ if and only if $p\in H^1$ and there are following Hamilton equations holds:
   \begin{equation*}
   	\begin{aligned}
   		&\dot{p}=-\nabla_{1} \mathscr{H}(q,p),\\
   		&\dot{q}=\nabla_{2} \mathscr{H}(q,p).\\
   	\end{aligned}
   \end{equation*}
   
   \begin{proposition}({Cieliebak, Frauenfelder and Volkov\cite{cieliebak2022variational}})\label{proposition2}
   	Let $\mathscr{L}:\mathcal{U}^1 \times H^0 \to \mathbb{R}$ be a Lagrange function with continuous gradient  satisfying condition (L) and $\mathscr{H}:\mathcal{U}^1 \times H^0 \to \mathbb{R}$ its fibrewise Legendre transform. Then the assignments $(q, p)\mapsto q$ and $q\mapsto (q, p=\nabla_{2} \mathscr{L}(q,\dot{q}))$ define a 1-to-1 correspondence between critical points $(q,p)$ of $\mathscr{A}_{\mathscr{H}}$ and critical points $q$ of $S_{\mathscr{L}}$.
   \end{proposition}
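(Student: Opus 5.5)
The plan is to check the two assignments against each other using three identities available from the structure above: condition (L), the gradient formulas for the fibrewise Legendre transform $\mathscr{H}$, and the characterizations of critical points. The latter say that critical points of $\mathcal{S}_{\mathscr{L}}$ are exactly the solutions of the Euler--Lagrange equation with $\nabla_2\mathscr{L}(q,\dot q)\in H^1$, and critical points of $\mathscr{A}_{\mathscr{H}}$ are exactly the solutions of Hamilton's equations with $p\in H^1$. I take the gradient formulas $\nabla_1\mathscr{H}(q,p)=-\nabla_1\mathscr{L}(q,W(q,p))$ and $\nabla_2\mathscr{H}(q,p)=W(q,p)$ as given. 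If one wanted to verify them, the route is to differentiate $\langle p,W(q,p)\rangle-\mathscr{L}(q,W(q,p))$ by the chain rule and use $\nabla_2\mathscr{L}(q,W(q,p))=p$, which makes the terms involving $DW$ cancel.

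First, take a critical point $(q,p)$ of $\mathscr{A}_{\mathscr{H}}$. The second Hamilton equation gives $\dot q=\nabla_2\mathscr{H}(q,p)=W(q,p)$. Applying $\nabla_2\mathscr{L}(q,\cdot)$ and using (L) yields $p=\nabla_2\mathscr{L}(q,\dot q)$, so $\nabla_2\mathscr{L}(q,\dot q)\in H^1$ because $p\in H^1$. The first Hamilton equation then reads
$$\frac{d}{dt}\nabla_2\mathscr{L}(q,\dot q)=\dot p=-\nabla_1\mathscr{H}(q,p)=\nabla_1\mathscr{L}(q,W(q,p))=\nabla_1\mathscr{L}(q,\dot q).$$
Hence $q$ is a critical point of $\mathcal{S}_{\mathscr{L}}$. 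Note also that $p$ is forced to equal $\nabla_2\mathscr{L}(q,\dot q)$, so the map $(q,p)\mapsto q$ is injective on critical points.

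Conversely, take a critical point $q$ of $\mathcal{S}_{\mathscr{L}}$ and set $p:=\nabla_2\mathscr{L}(q,\dot q)$. This lies in $H^1$ by the characterization of critical points. Since $\dot q\in H^0$, condition (L) gives $W(q,p)=\dot q$, that is, $\dot q=\nabla_2\mathscr{H}(q,p)$. The Euler--Lagrange equation then gives $\dot p=\nabla_1\mathscr{L}(q,\dot q)=\nabla_1\mathscr{L}(q,W(q,p))=-\nabla_1\mathscr{H}(q,p)$. So $(q,p)$ is a critical point of $\mathscr{A}_{\mathscr{H}}$.

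Composing the two assignments returns $q$ in one direction and $(q,\nabla_2\mathscr{L}(q,\dot q))=(q,p)$ in the other, which establishes the bijection. The only delicate point is regularity bookkeeping. The Hamiltonian critical-point condition requires $p\in H^1$ while $q\in\mathcal{U}^1$, and the Euler--Lagrange characterization requires $\nabla_2\mathscr{L}(q,\dot q)\in H^1$. These two conditions match exactly under $p=\nabla_2\mathscr{L}(q,\dot q)$, so I expect no real obstacle beyond being careful that (L) is applied pointwise in $q$ and that both inverse relations hold on all of $H^0$.
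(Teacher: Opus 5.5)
Your proposal is correct and is essentially the argument of Cieliebak, Frauenfelder and Volkov, which this paper cites without reproducing: the second Hamilton equation together with condition (L) forces $p=\nabla_2\mathscr{L}(q,\dot q)$, the gradient formulas $\nabla_1\mathscr{H}(q,p)=-\nabla_1\mathscr{L}(q,W(q,p))$ and $\nabla_2\mathscr{H}(q,p)=W(q,p)$ turn the first Hamilton equation into the Euler--Lagrange equation and back, and the regularity requirements $p\in H^1$ and $\nabla_2\mathscr{L}(q,\dot q)\in H^1$ match exactly. Your side remark on deriving the gradient formulas by the chain rule is also correct, provided $W$ is differentiable as stipulated in (L): the $DW$ terms cancel because $\nabla_2\mathscr{L}(q,W(q,p))=p$.
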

   
   Now in the $e^{-}Z^{2+}e^{-}$ Helium atom with mean interaction, the function $\mathscr{B}_{av}$ defined in (\ref{Bav}) is the Lagrangian action $\mathcal{S}_{{\mathscr{L}_{av}}}$ associated to the Lagrange function $$\mathscr{L}_{av}:\mathscr{H}^{1}_{av}\times H^{0}(S^1,\mathbb{R}^2)\to \mathbb{R},  \mathscr{L}_{av}(z,w)=\mathscr{L}(z_1,w_1)+\mathscr{L}(z_2,w_2)+\mathscr{A}(z_1,z_2),$$ where $\mathscr{H}^{1}_{av}$ and $\mathscr{A}$ are defined in Section \ref{H1av} and (\ref{mean interaction}), $\mathscr{L}$ is the Kepler Lagrangian with charge $N=2$, i.e. $$\mathscr{L}_{av}(z_i,w_i)=2{{\left\| z_i \right\|}^{2}}{{\left\| w_i \right\|}^{2}}+\frac{2}{{{\left\| z_i \right\|}^{2}}}(i=1,2).$$
   
   In section \ref{section3.2}, from the computation of the differential of $\mathscr{B}_{av}$, we know that $\mathscr{L}_{av}$ has a continuous $L^2$-gradient. The interaction term $\mathscr{A}$ is independent with $w_i$, so the associated momenta $\eta_{i}$ are given by $$\eta_{i}=\nabla_{2}\mathscr{L}_{av}(z_i, w_i)=4{{\left\| z_i \right\|}^{2}}(w_i)^2,$$ which can be solved for $w_i$ as $$w_i=\frac{\eta_{i}}{4{{\left\| z_i \right\|}^{2}}}=W(z_i, \eta_{i}).$$

   Then the associated Hamilton function becomes \begin{equation*}
   	\begin{aligned}
   		\mathscr{H}_{av}(z,\eta)&=\mathscr{H}(z_1,\eta_{1})+ \mathscr{H}(z_2,\eta_{2})-\mathscr{A}(z_1,z_2)\\
   		&=\sum\limits_{i=1}^{2}{\left( \frac{{{\left\| {{\eta }_{i}} \right\|}^{2}}}{8{{\left\| {{z}_{i}} \right\|}^{2}}}-\frac{2}{{{\left\| {{z}_{i}} \right\|}^{2}}} \right)}+\frac{{{\left\| {{z}_{1}} \right\|}^{2}}{{\left\| {{z}_{2}} \right\|}^{2}}}{{{\left\| z_{1}^{2} \right\|}^{2}}{{\left\| {{z}_{2}} \right\|}^{2}}+{{\left\| z_{2}^{2} \right\|}^{2}}{{\left\| {{z}_{1}} \right\|}^{2}}}\\
   	\end{aligned}
   \end{equation*}
    with Hamilton action \begin{equation*}
    	\begin{aligned}
    		\mathscr{A}_{\mathscr{H}_{av}}(z,\eta)&=\left\langle {{\eta }_{1}},{{z}'_{1}} \right\rangle +\left\langle {{\eta }_{2}},{{z}'_{2}} \right\rangle-\mathscr{H}_{av}(z,\eta)\\
    		&=\sum\limits_{i=1}^{2}{\left\langle {{\eta }_{i}},{{z}'_{i}} \right\rangle-\left( \frac{{{\left\| {{\eta }_{i}} \right\|}^{2}}}{8{{\left\| {{z}_{i}} \right\|}^{2}}}+\frac{2}{{{\left\| {{z}_{i}} \right\|}^{2}}} \right)}-\frac{{{\left\| {{z}_{1}} \right\|}^{2}}{{\left\| {{z}_{2}} \right\|}^{2}}}{{{\left\| z_{1}^{2} \right\|}^{2}}{{\left\| {{z}_{2}} \right\|}^{2}}+{{\left\| z_{2}^{2} \right\|}^{2}}{{\left\| {{z}_{1}} \right\|}^{2}}}.\\
    	\end{aligned}
    \end{equation*}
    
    By Proposition \ref{proposition2}, critical points of $\mathscr{A}_{\mathscr{H}_{av}}$ are in 1-to-1 correspondence to critical points of $\mathscr{B}_{av}$.
    
	\section{Periodic orbits in mean $e^{-}Z^{2+}e^{-}$ Helium}
	%Classify the periodic orbits by their time period $\sigma$.（acoording to Urs's article)
	
	%Goal: Prove that there are one-to-one correspondence between positive rational numbers and solutions of mean interaction equation of Helium
	
	For $\sigma>0$ we introduce the vector space $${{V}_{\sigma }}:=\left\{ q\in {{C}^{0}}\left( \left[ 0,\sigma  \right],\left[ 0,\infty  \right) \right)\cap {{C}^{\infty }}\left( \left[ 0,\sigma  \right),\left[ 0,\infty  \right) \right):\dot{q}\left( 0 \right)=0,q\left( \sigma  \right)=0, \forall t\in \left[0,\sigma\right), q(t)>0 \right\},$$ where $\dot{q}=\frac{dq}{dt}$ denotes the derivative of $q$. We refer the space ${{V}_{\sigma }}$ as the space of free falls of period $\sigma$.
	%(Sundman Integral)
	For $q\in {{V}_{\sigma }}$, we abbreviate by $\bar{q}=\frac{1}{\sigma}\int_{0}^{\sigma}q(t)dt$ the mean value of $q$.
	
	Given $\sigma_{1},\sigma_{2}>0$, we first look for solutions $(q_{1}^{\sigma_{1}},q_{2}^{\sigma_{2}})\in {{V}_{\sigma_{1} }}\times {{V}_{\sigma_{2} }}$ solving the problem (\ref{main equation of mean interaction}), i.e.
	\begin{equation} \label{updates mean equation of mean interaction}
		\begin{aligned}
			&{\ddot{q}_{1}}^{\sigma_{1}}=-\frac{2}{\left(q_{1}^{\sigma_{1}}\right)^2}+\frac{1}{ \left({\bar{q}_{1}}^{\sigma_{1}}+{\bar{q}_{2}}^{\sigma_{2}}\right)^2} \text{  }\text{  } t\in \left[0,\sigma_{1} \right),\\
			&{\ddot{q}_{2}}^{\sigma_{2}}=-\frac{2}{\left(q_{2}^{\sigma_{2}}\right)^2}+\frac{1}{ \left({\bar{q}_{1}}^{\sigma_{1}}+{\bar{q}_{2}}^{\sigma_{2}}\right)^2} \text{  }\text{  }  t\in \left[0,\sigma_{2} \right).
		\end{aligned}
	\end{equation}

%Denote $C:=\frac{1}{ \left(\bar{q}_{1}+\bar{q}_{2}\right)^2}\ge 0$
\begin{lemma}
	For $\sigma>0$ and $m\ge 0$, when $m>0$, $q_{m,\sigma}(0)\le \sqrt{\frac{2}{m}}$. $q_{m,\sigma}$ also satisfies $q_{m,\sigma}(\sigma)=0$, $\dot{q}_{m,\sigma}(0)=0$. Then there exists a unique $q_{m,\sigma}\in V_{\sigma}$ solving the ODE
	\begin{equation}\label{yibanODE}
		{\ddot{q}_{m,\sigma}}(t)=-\frac{2}{{q_{m,\sigma}}^2}+m, t\in \left[0,\sigma\right).
	\end{equation} 
\end{lemma}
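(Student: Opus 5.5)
The plan is a shooting argument in the initial height $h=q(0)$, using conservation of energy. Any solution of (\ref{yibanODE}) with $\dot q(0)=0$ and $q(0)=h>0$ conserves
\[
E=\frac{\dot q^2}{2}-\frac{2}{q}-mq=-\frac{2}{h}-mh .
\]
Existence and uniqueness for this initial value problem near $t=0$ is standard, since the right-hand side is smooth for $q>0$.

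\textbf{The constraint on $h$.} At $t=0$ we have $\ddot q(0)=-2/h^2+m$. If $h>\sqrt{2/m}$, then $\ddot q(0)>0$, so $q$ increases initially. Since $\ddot q=m-2/q^2$ only grows as $q$ grows, $q$ then stays bounded away from $0$ forever and never reaches $0$. If $h=\sqrt{2/m}$, the solution is the equilibrium and again never reaches $0$. So we need $h<\sqrt{2/m}$ when $m>0$; the lemma's condition $q(0)\le\sqrt{2/m}$ encodes this, with equality excluded in effect. In that case $\ddot q(0)<0$, and as long as $q<h$ we have $\ddot q<0$. Hence $q$ decreases monotonically and, by the energy relation, $\dot q=-\sqrt{2(E+2/q+mq)}$ for $t>0$. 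The square root is positive for $q\in(0,h)$ because
\[
E+\frac{2}{q}+mq=2\Bigl(\frac1q-\frac1h\Bigr)+m(q-h)=(h-q)\Bigl(\frac{2}{qh}-m\Bigr)>0 \quad\text{when } qh<h^2<\frac{2}{m}.
\]

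\textbf{Collision time as a function of $h$.} Separating variables, the time to reach $0$ is
\[
T(h)=\int_0^h\frac{dq}{\sqrt{2(h-q)\bigl(\frac{2}{qh}-m\bigr)}} .
\]
This integral is finite: near $q=0$ the integrand behaves like $\sqrt q$, and near $q=h$ like $(h-q)^{-1/2}$. Substituting $q=hs$ gives
\[
T(h)=\int_0^1\frac{h\,ds}{\sqrt{2h(1-s)\bigl(\frac{2}{sh^2}-m\bigr)}}
=h^{3/2}\int_0^1\frac{\sqrt{s}\,ds}{\sqrt{2(1-s)(2-mh^2 s)}} .
\]
The factor $h^{3/2}$ is strictly increasing in $h$, and so is the integral, because $2-mh^2s$ decreases in $h$. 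Thus $T$ is continuous and strictly increasing on $(0,\sqrt{2/m})$ and tends to $0$ as $h\to0$.

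As $h\to\sqrt{2/m}$, the integrand develops a non-integrable $(1-s)^{-1}$ singularity at $s=1$, so $T(h)\to\infty$. Equivalently, the trajectory lingers near the equilibrium.

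When $m=0$ the integral is independent of $h$, and $T=c\,h^{3/2}$ maps $(0,\infty)$ onto $(0,\infty)$.

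In both cases $T$ is a bijection onto $(0,\infty)$. So for each $\sigma>0$ there is exactly one $h$ with $T(h)=\sigma$, which gives existence and uniqueness in $V_\sigma$.

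\textbf{Regularity and uniqueness.} It remains to check that this $q$ lies in $V_\sigma$: it is smooth on $[0,\sigma)$, positive there, continuous at $\sigma$ with $q(\sigma)=0$, and $\dot q(0)=0$. For uniqueness, any $q\in V_\sigma$ solving (\ref{yibanODE}) has some initial value $h$. By the first step $h<\sqrt{2/m}$ when $m>0$, since otherwise $q$ never vanishes. By ODE uniqueness, $q$ coincides with the trajectory above, and its first zero is at $T(h)$, which forces $T(h)=\sigma$.

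\textbf{Main obstacle.} The delicate point is the limit $T(h)\to\infty$ as $h\to\sqrt{2/m}$, which is what gives surjectivity. I would handle it by monotone convergence applied to the rescaled integral: at $mh^2=2$ the integrand becomes $\sqrt{s}/\bigl(2(1-s)\bigr)$, which is not integrable at $s=1$.
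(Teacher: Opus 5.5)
Your proof is correct, but it is organized differently from the paper's.

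\textbf{The paper's route.} The paper fixes the initial value $q_{m,\sigma}(0)$ and rewrites the equation as a first-order system on $D=\{q>0\}$. It uses energy conservation to get $\dot q<0$, then runs a maximal-interval argument. If the maximal solution on $[0,\beta)$ had $\lim_{t\to\beta^-}q(t)>0$, the energy identity would keep it in a compact subset of $D$, so it could be continued. Hence it must end in a collision, and the paper identifies that time with $\sigma$ via the hypothesis $q(\sigma)=0$. This gives uniqueness of the trajectory for a given starting height and shows it runs all the way to collision. It never shows that a prescribed $\sigma$ is attained by some starting height, or by only one.

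\textbf{What your route adds.} Your shooting argument supplies exactly this missing piece. Your collision-time map $T(h)$ is the formula $\sigma=\tfrac{q_0^{3/2}}{2}f(k)$, $k=\tfrac{mq_0^2}{2}$, which the paper only derives in the next proposition. You show $T$ is continuous and strictly increasing, tends to $0$ as $h\to 0$, and tends to $\infty$ as $h\to\sqrt{2/m}$ by monotone convergence. So $T$ is a bijection onto $(0,\infty)$, which gives existence and uniqueness for each $\sigma$ at once. Your first step also correctly shows that the bound $q(0)\le\sqrt{2/m}$ is effectively strict, since equality gives the equilibrium.

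\textbf{What the paper's route buys you.} The paper's continuation argument justifies the one claim you only assert in your regularity paragraph: that the trajectory starting at $h$ exists on all of $[0,T(h))$ and satisfies $q(t)\to 0$ as $t\to T(h)$. You should add a sentence to that effect. While $q\ge\epsilon>0$, the energy identity bounds $|\dot q|$, so the solution extends. The time needed to descend to level $\epsilon$ is $\int_\epsilon^h dq/|\dot q|<T(h)$, and this tends to $T(h)$ as $\epsilon\to 0$.
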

\begin{proof}
	Let $p_{m,\sigma}=\dot{q}_{m,\sigma}$, for $t\in \left[0,\sigma\right)$, we rewrite (\ref{yibanODE}) as \begin{equation*}
	\left\{	\begin{aligned}
			&\dot{q}_{m,\sigma}=p_{m,\sigma},\\
			&\dot{p}_{m,\sigma}=-\frac{2}{{q_{m,\sigma}}^2}+m.
		\end{aligned} \right.
	\end{equation*}

Denote $F(q_{m,\sigma},p_{m,\sigma})=(p_{m,\sigma},-\frac{2}{{q_{m,\sigma}}^2}+m)^{T}$, we have system $(q_{m,\sigma},p_{m,\sigma})^{T}=F(q_{m,\sigma},p_{m,\sigma})$ is defined on $D=\left\{(q_{m,\sigma},p_{m,\sigma})\in \mathbb{R}^2 | q>0\right\}$. The energy $E=E(0)=\frac{\dot{q}_{m,\sigma}(t)^2}{2}-\frac{2}{q_{m,\sigma}(t)}-mq_{m,\sigma}(t)=-\frac{2}{q_{m,\sigma}(0)}-mq_{m,\sigma}(0)$, the velocity satisfies $p_{m,\sigma}^2=2\left( \frac{2}{q_{m,\sigma}}-\frac{2}{q_{m,\sigma}\left(0\right)}+mq_{m,\sigma}-m{q_{m,\sigma}\left(0\right)} \right)$.

Since $\dot{p}_{m,\sigma}(0)=-\frac{2}{{q_{m,\sigma}}^2}+m\le 0$ and $\dot{q}_{m,\sigma}(0)=0$, $p_{m,\sigma}(t)\le0$. If there exists a $t_1\in\left(0,\sigma\right)$, s.t. $p_{m,\sigma}(t_1)=0$, then $q_{m,\sigma}(t_1)=q_{m,\sigma}(0)$, $q_{m,\sigma}\equiv q_{m,\sigma}(0)$, which contradicts the condition $q_{m,\sigma}(\sigma)=0$. So $\forall t\in\left(0,\sigma\right)$, $p_{m,\sigma}(t)<0$.

Since $F(q_{m,\sigma},p_{m,\sigma})$ is continuously differentiable on $D$, it satisfies Lipschitz condition for any compact subset of $D$. According to the theorem on the existence and uniqueness of solutions to ODE, there exists a unique maximum interval $I=\left[0, \beta \right)$, s.t. $(q_{m,\sigma},p_{m,\sigma})^{T}$ solves (\ref{yibanODE}) and it is unique.

From $q_{m,\sigma}(\sigma)=0$, then $\beta\le \sigma$. Assume $\beta<\sigma$, due to $q_{m,\sigma}(t)$ is strictly monotonically decreasing and bounded by a lower bound 0, $\underset{t\to \beta^{-} }{\mathop{\lim }}\,{q_{m,\sigma}}\left( t\right)=q_{m,\sigma}(\beta)\ge 0$ exists.

If $q_{m,\sigma}(\beta)>0$, $\forall t\in \left[0, \beta \right)$, $q_{m,\sigma}(t)\ge q_{m,\sigma}(\beta)>0$, there exists a finite number $M>0$, s.t.$\frac{(p_{m,\sigma})^2}{2}\le \frac{2}{q_{m,\sigma}(\beta)}+mq_{m,\sigma}(\beta)-\frac{2}{q_{m,\sigma}(0)}-mq_{m,\sigma}(0)\le M$. Then the curve of solution locates in a compact subset $K=\left[q_{m,\sigma}(\beta),q_{m,\sigma}(0)\right]\times \left[\sqrt{2M},\sqrt{2M}\right]$$\subseteq D$. From the theory of ODE, we know the solution can be extended to a bigger interval, contradicting to the definition of maximal interval. 

So $q_{m,\sigma}(\beta)=0$， then $\beta=\sigma$. There exists a unique $q_{m,\sigma}(t)\in V_{\sigma}(t\in\left[0,\sigma\right))$ solving the ODE (\ref{yibanODE}).
\end{proof}

Now, given a solution $q_{m,\sigma}$ of (\ref{yibanODE}), we consider the function $$f_{\sigma}:\left[0,\infty\right) \to \left[0,\infty\right), m\mapsto \sqrt{m}\cdot {{\bar{q}}_{m,\sigma }}.$$

\begin{proposition}
	For any $\sigma>0$, the function $f_{\sigma}$ is strictly monotonically increasing such that $\underset{m\to \infty }{\mathop{\lim }}\,{{f}_{\sigma }}\left( m \right)=\sqrt{2}$.
\end{proposition}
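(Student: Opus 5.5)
The plan is to reduce everything to explicit integrals through the energy identity and a scaling substitution. For the solution $q=q_{m,\sigma}$ given by the preceding Lemma, write $q_0:=q(0)\le\sqrt{2/m}$. We showed there that $\dot q<0$ on $(0,\sigma)$ and that
\[
\tfrac12\dot q^2=\tfrac{2}{q}-\tfrac{2}{q_0}-m(q_0-q).
\]
This lets us change variables $t\mapsto q$ in both $\sigma=\int_0^\sigma dt$ and $\int_0^\sigma q\,dt$. Substitute $q=q_0x$ with $x\in[0,1]$ and introduce the dimensionless parameter $\kappa:=mq_0^2/2\in[0,1]$. One computes
\[
\tfrac{2}{q}-\tfrac{2}{q_0}-m(q_0-q)=\tfrac{2}{q_0}(1-x)\Big(\tfrac1x-\kappa\Big).
\]
Setting $w_\kappa(x):=\sqrt{x}\,/\sqrt{(1-x)(1-\kappa x)}$, this gives
\[
\sigma=\tfrac{q_0^{3/2}}{2}A(\kappa),\qquad \int_0^\sigma q\,dt=\tfrac{q_0^{5/2}}{2}B(\kappa),
\]
\[
A(\kappa)=\int_0^1 w_\kappa(x)\,dx,\qquad B(\kappa)=\int_0^1 x\,w_\kappa(x)\,dx .
\]
Hence $\bar q=q_0B/A$. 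Since $\sqrt m\,q_0=\sqrt{2\kappa}$, we get
\[
f_\sigma(m)=\sqrt{2}\,g(\kappa),\qquad g(\kappa):=\sqrt{\kappa}\,\frac{B(\kappa)}{A(\kappa)} .
\]

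Next I would show that, for fixed $\sigma$, $m\mapsto\kappa$ is a strictly increasing bijection $[0,\infty)\to[0,1)$. Eliminating $q_0$ via $q_0=(2\sigma/A(\kappa))^{2/3}$ gives
\[
m=\frac{2\kappa}{q_0^2}=2\kappa\Big(\frac{A(\kappa)}{2\sigma}\Big)^{4/3}.
\]
The integrand of $A$ is increasing in $\kappa$, so $A$ is increasing and the right-hand side is strictly increasing in $\kappa$, equal to $0$ at $\kappa=0$. Moreover $A(\kappa)\to\infty$ as $\kappa\to1^-$, because at $\kappa=1$ the integrand behaves like $1/(1-x)$ near $x=1$. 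So the map $\kappa\mapsto m$ is a continuous increasing bijection $[0,1)\to[0,\infty)$. Its inverse is the desired monotone map $m\mapsto\kappa$, and it is consistent with the uniqueness in the preceding Lemma. (The value $\kappa=1$ corresponds to the equilibrium $q\equiv\sqrt{2/m}$, which does not lie in $V_\sigma$.) It therefore suffices to prove that $g$ is strictly increasing on $[0,1)$ with $g(\kappa)\to1$ as $\kappa\to1^-$.

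For monotonicity, observe that $B/A$ is the mean of $x$ with respect to the probability density proportional to $w_\kappa$ on $[0,1]$. For $\kappa_1<\kappa_2$, the ratio
\[
\frac{w_{\kappa_2}}{w_{\kappa_1}}=\sqrt{\frac{1-\kappa_1x}{1-\kappa_2x}}
\]
is strictly increasing in $x$. By the standard monotone-likelihood-ratio (Chebyshev integral) inequality, the mean of $x$ strictly increases. Thus $B/A$ is increasing, and so is $\sqrt\kappa$, so $g$ is strictly increasing; also $g(0)=0$.

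For the limit, as $\kappa\to1$ both $A$ and $B$ diverge logarithmically: near $x=1$, $w_\kappa\approx 1/\sqrt{(1-x)(1-\kappa+(1-x))}$. Meanwhile $A-B=\int_0^1(1-x)w_\kappa\,dx$ stays bounded, since its integrand is dominated by an integrable function uniformly in $\kappa$. Hence $B/A\to1$, giving $g\to1$ and $f_\sigma(m)\to\sqrt2$.

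The main obstacle I expect is this last asymptotic step: one must verify $A(\kappa)\to\infty$ and the uniform bound on $A-B$, with dominated and monotone convergence near the endpoint $x=1$. A secondary point needing care is the MLR argument, where strictness requires the ratio to be strictly monotone, as it is for $\kappa_2>\kappa_1$.
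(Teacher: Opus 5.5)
Your proposal is correct and follows the paper's proof step for step: the substitution $u=q/q_0$ with $k=mq_0^2/2$ (your $\kappa$), giving $\sigma=\frac{q_0^{3/2}}{2}f(k)$ and $f_\sigma(m)=\sqrt{2k}\,h(k)$, where your $A$, $B$, $B/A$ are the paper's $f$, $g$, $h$; then monotonicity of $h$, monotonicity of $k$ in $m$, and $h(k)\to 1$ as $k\to 1^-$. Your sub-steps are tighter than the paper's in three places:
\begin{itemize}
\item Your likelihood-ratio argument is the finite-difference form of the paper's symmetrized double integral for $g'f-gf'$, whose integrand carries the factor $(u-v)^2$.
\item Your explicit relation $m=2\kappa\,(A(\kappa)/2\sigma)^{4/3}$ replaces the paper's implicit differentiation of the period formula. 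There the sign of $dq_0/dm$ is only loosely justified by an appeal to the velocity.
\item Your bound $A-B\le\int_0^1\sqrt{x}\,dx=\frac{2}{3}$ makes rigorous the paper's remark that the two integrals ``diverge to the same degree.''
\end{itemize}
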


\begin{proof}
We abbreviate $q=q_{m,\sigma}$, we can see the energy $E:=\frac{\dot{q}(t)^2}{2}-\frac{2}{q(t)}-mq(t)$, since $\dot{q}(0)=0$ and the law of conservation of energy, we have $E=E(0)=-\frac{2}{q(0)}-mq(0)$.

And we can express the velocity with the help of the energy by the formula $\dot{q}=-\sqrt{2\left( \frac{2}{q}-\frac{2}{q_{0}}+mq-m{q_{0}} \right)}$, where $q_0:=q(0)\in\left(0,\sqrt{\frac{2}{m}}\right]$ represents general initial position, . The sign of the velocity is negative because both two electrons move towards the nucleus.

Using $q(\sigma)=0$, we can compute $\sigma$ as follows:
\begin{equation}\label{the formula of time}
	\begin{aligned}
		\sigma=\int_{0}^{\sigma }{dt}=\int_{q_0}^{0}\frac{dq}{\dot{q}}=\int_{0}^{q_0}\frac{dq}{\sqrt{2\left( \frac{2}{q}-\frac{2}{q_{0}}+mq-m{q_{0}} \right)}}&=\int_{0}^{1}{\frac{{q_0}du}{\sqrt{2\left( \frac{2}{{q_0}}\left( \frac{1-u}{u} \right)+m{q_0}\left( u-1 \right) \right)}}} \\ 
		&=\int_{0}^{1}{\frac{q_{0}^{\frac{3}{2}}\cdot \sqrt{u}du}{\sqrt{2\left( 1-u \right)\left( 2-mq_{0}^{2}\cdot u \right)}}} \\ 
		&=\frac{q_{0}^{\frac{3}{2}}}{2}\int_{0}^{1}{\frac{\sqrt{u}du}{\sqrt{\left( 1-u \right)\left( 1-{k}u \right)}}}. \\ 
	\end{aligned}	
\end{equation}
Where we change the variable $u=\frac{q}{{q_0}}$ and $k=\frac{mq_{0}^{2}}{2}$. We denote $f(k):=\int_{0}^{1}{\frac{\sqrt{u}du}{\sqrt{\left( 1-u \right)\left( 1-ku \right)}}}$, $k\in \left[0,1\right)$.

Then we can also obtain the formula of $\bar{q}$,
\begin{equation}
	\begin{aligned}
		\bar{q}=\frac{1}{{\sigma }}\int_{0}^{{\sigma}}{q}\left( t \right)dt&=\frac{1}{q_{0}^{\frac{3}{2}}f\left( k \right)}\int_{{q_0}}^{0}{\frac{{q}d{q}}{{\dot{q}}\left( t \right)} }\\ 
		&=\frac{1}{q_{0}^{\frac{3}{2}}f\left( k\right)}\text{  }\int_{0}^{{q_0}}{\frac{{q}d{q}}{\sqrt{2\left( \frac{2}{q\left( t \right)}-\frac{2}{{q_0}}+m\left( q-{q_0} \right) \right)}}} \\ 
		& =\frac{1}{2q_{0}^{\frac{3}{2}}f\left( k \right)}\int_{0}^{1}{\frac{q_{0}^{\frac{5}{2}}\cdot {{u}^{\frac{3}{2}}}du}{\sqrt{\left( 1-u \right)\left( 1-ku \right)}}}. \\ 
	\end{aligned}
\end{equation}

We denote $g(k):=\int_{0}^{1}{\frac{{{u}^{\frac{3}{2}}}du}{\sqrt{\left( 1-u \right)\left( 1-ku \right)}}}$, $k=\frac{mq_{0}^{2}}{2} \in \left[0,1\right)$, and $h(k):=\frac{g(k)}{f(k)}$, $k\in \left[0,1\right)$, then we can get $${\bar{q}}=\frac{2q_{0}^{\frac{5}{2}}\cdot g\left( k \right)}{2q_{0}^{\frac{3}{2}}\cdot f\left( k \right)}={q_0}\cdot h\left( k \right),$$ and $${{f}_{\sigma }}\left( m \right)=\sqrt{m}\cdot {{\bar{q}}_{m,\sigma }}={{q}_{0}}\sqrt{m}\cdot h\left( k \right)=\sqrt{2k}\cdot h(k).$$

We first calculate the first derivative of $f(k)$, $g(k)$ and $h(k)$ separately. From the definitions above, we know the ranges of these three functions are all $\mathbb{R}_{>0}$.

$${h}'\left( k \right)=\frac{{g}'\left( k \right)f\left( k \right)-g\left( k \right){f}'\left( k \right)}{{{\left( f\left( k \right) \right)}^{2}}},$$

$${f}'\left( k\right)=\frac{d}{dk}\int_{0}^{1}{\frac{\sqrt{u}du}{\sqrt{\left( 1-u \right)\left( 1-ku \right)}}}=\frac{1}{2}\int_{0}^{1}{\frac{{{u}^{\frac{3}{2}}}}{\sqrt{\left( 1-u \right){{\left( 1-ku \right)}^{3}}}}du},$$
%\begin{equation*}
%	\begin{aligned}
%		{f}'\left( k\right)&=\frac{d}{dk}\int_{0}^{1}{\frac{\sqrt{u}du}{\sqrt{\left( 1-u \right)\left( 1-ku \right)}}} \\ 
%		&=\frac{1}{2}\int_{0}^{1}{\frac{{{u}^{\frac{3}{2}}}}{\sqrt{\left( 1-u \right){{\left( 1-ku \right)}^{3}}}}du}, \\ 
%	\end{aligned}
%\end{equation*}
$${g}'\left( k \right) =\frac{d}{dk}\int_{0}^{1}{\frac{{{u}^{\frac{3}{2}}}du}{\sqrt{\left( 1-u \right)\left( 1-ku \right)}}}=\frac{1}{2}\int_{0}^{1}{\frac{{{u}^{\frac{5}{2}}}}{\sqrt{\left( 1-u \right){{\left( 1-ku \right)}^{3}}}}du},$$
%\begin{equation*}
%	\begin{aligned}
%		{g}'\left( k \right) &=\frac{d}{dk}\int_{0}^{1}{\frac{{{u}^{\frac{3}{2}}}du}{\sqrt{\left( 1-u \right)\left( 1-ku \right)}}} \\ 
%		&=\frac{1}{2}\int_{0}^{1}{\frac{{{u}^{\frac{5}{2}}}}{\sqrt{\left( 1-u \right){{\left( 1-ku \right)}^{3}}}}du}, \\ 
%	\end{aligned}   		
%\end{equation*}

\begin{equation*}
	\begin{aligned}
		&{g}'\left( k \right)f\left( k \right)-g\left( k \right){f}'\left( k \right) \\ 
		& =\frac{1}{2}\int_{0}^{1}{\frac{{{udu}^{\frac{5}{2}}}}{\sqrt{\left( 1-u \right){{\left( 1-ku \right)}^{3}}}}}\int_{0}^{1}{\frac{\sqrt{v}dv}{\sqrt{\left( 1-v \right)\left( 1-kv \right)}}}-\frac{1}{2}\int_{0}^{1}{\frac{{{u}^{\frac{3}{2}}}du}{\sqrt{\left( 1-u \right)\left( 1-ku \right)}}}\int_{0}^{1}{\frac{{{v}^{\frac{3}{2}}}dv}{\sqrt{\left( 1-v \right){{\left( 1-kv \right)}^{3}}}}} \\ 
		& =\frac{1}{2}\iint\limits_{{{\left[ 0,1 \right]}^{2}}}{\frac{{{u}^{\frac{3}{2}}}\sqrt{v}}{\sqrt{\left( 1-u \right)\left( 1-v \right){{\left( 1-ku \right)}^{3}}{{\left( 1-kv \right)}^{3}}}}}\left( u\left( 1-kv \right)-v\left( 1-ku \right) \right)dudv \\ 
		& =\frac{1}{2}\iint\limits_{{{\left[ 0,1 \right]}^{2}}}{\frac{{{u}^{\frac{3}{2}}}\sqrt{v}}{\sqrt{\left( 1-u \right)\left( 1-v \right){{\left( 1-ku \right)}^{3}}{{\left( 1-kv \right)}^{3}}}}}\left( u-v \right)dudv \\ 
		& =\frac{1}{2}\left( \iint\limits_{u>v}{\frac{{{u}^{\frac{3}{2}}}\sqrt{v}}{\sqrt{\left( 1-u \right)\left( 1-v \right){{\left( 1-ku \right)}^{3}}{{\left( 1-kv \right)}^{3}}}}}\left( u-v \right)dudv+\iint\limits_{u<v}{\frac{{{u}^{\frac{3}{2}}}\sqrt{v}}{\sqrt{\left( 1-u \right)\left( 1-v \right){{\left( 1-ku \right)}^{3}}{{\left( 1-kv \right)}^{3}}}}}\left( u-v \right)dudv \right) \\ 
		& =\frac{1}{2}\left( \iint\limits_{u>v}{\frac{{{u}^{\frac{3}{2}}}\sqrt{v}\left( u-v \right)dudv}{\sqrt{\left( 1-u \right)\left( 1-v \right){{\left( 1-ku \right)}^{3}}{{\left( 1-kv \right)}^{3}}}}}+\iint\limits_{v'<u'}{\frac{v{{'}^{\frac{3}{2}}}\sqrt{u'}\left( v'-u' \right)dv'du'}{\sqrt{\left( 1-v' \right)\left( 1-u' \right){{\left( 1-kv' \right)}^{3}}{{\left( 1-ku' \right)}^{3}}}}} \right) \\ 
		& =\frac{1}{2}\left( \iint\limits_{u>v}{\frac{{{u}^{\frac{3}{2}}}\sqrt{v}}{\sqrt{\left( 1-u \right)\left( 1-v \right){{\left( 1-ku \right)}^{3}}{{\left( 1-kv \right)}^{3}}}}}\left( u-v \right)dudv+\iint\limits_{u>v}{\frac{{{v}^{\frac{3}{2}}}\sqrt{u}}{\sqrt{\left( 1-u \right)\left( 1-v \right){{\left( 1-ku \right)}^{3}}{{\left( 1-kv \right)}^{3}}}}}\left( v-u \right)dudv \right) \\ 
		& =\frac{1}{2}\iint\limits_{u>v}{\frac{\sqrt{u}\sqrt{v}}{\sqrt{\left( 1-u \right)\left( 1-v \right){{\left( 1-ku \right)}^{3}}{{\left( 1-kv \right)}^{3}}}}}{{\left( u-v \right)}^{2}}dudv .\\ 
	\end{aligned}
\end{equation*}

For any $k\in \left[0,1\right)$, ${g}'\left( k \right)f\left( k \right)-g\left( k \right){f}'\left( k \right)>0$, so for any $k\in \left[0,1\right)$, ${h}'\left( k \right)>0$.

Then we calculate the derivative of $f_{\sigma}$. 

Differentiating (\ref{the formula of time}) we obtain $0=\frac{3}{4}q_{0}^{\frac{1}{2}}{{q}'_0}f\left( k \right)+\frac{1}{2}q_{0}^{\frac{3}{2}}\left( \frac{1}{2}q_{0}^{2}+m{{q}_{0}}{q}' \right){f}'\left( k \right)$. Since the velocity ${q}'<0$, this implies that $$k'(m)=\left( \frac{1}{2}q_{0}^{2}+m{{q}_{0}}{q}' \right)>0,$$ then $${{f}'_{\sigma }}\left( m \right)=\frac{{k}'\left( m \right)h\left( k \right)}{\sqrt{2k}}+\sqrt{2k}{h}'\left( k \right){k}'\left( m \right)>0.$$

For the limit, we first denote the primitive functions of $g(k)$ and $f(k)$ by $G(u)$ and $F(u)$ separately. When $k\to 1$, $G(u)=\int{\frac{{{u}^{\frac{3}{2}}}}{\left( 1-u \right)}}du=-\frac{2{{u}^{\frac{3}{2}}}}{3}-2\sqrt{u}+\ln \left| \frac{1+\sqrt{u}}{1-\sqrt{u}} \right|+C$ and $F(u)=\int{\frac{\sqrt{u}}{\left( 1-u \right)}}du=-2\sqrt{u}+\ln \left| \frac{1+\sqrt{u}}{1-\sqrt{u}} \right|+C$ are both divergence when $u\to 1^{-}$. So we rewrite $h(k)$ in the following:
$$	h\left( k \right)=\frac{g\left( k \right)}{f\left( k \right)}=\frac{\int_{0}^{1}{\frac{{{u}^{\frac{3}{2}}}}{\sqrt{\left( 1-u \right)\left( 1-ku \right)}}}du}{\int_{0}^{1}{\frac{\sqrt{u}}{\sqrt{\left( 1-u \right)\left( 1-ku \right)}}}du}=\frac{\int_{0}^{1-\delta }{\frac{{{u}^{\frac{3}{2}}}}{\sqrt{\left( 1-u \right)\left( 1-ku \right)}}}du+\int_{1-\delta }^{1}{\frac{{{u}^{\frac{3}{2}}}}{\sqrt{\left( 1-u \right)\left( 1-ku \right)}}}du}{\int_{0}^{1-\delta }{\frac{\sqrt{u}}{\sqrt{\left( 1-u \right)\left( 1-ku \right)}}}du+\int_{1-\delta }^{1}{\frac{\sqrt{u}}{\sqrt{\left( 1-u \right)\left( 1-ku \right)}}}du},$$
where $\underset{k\to 1}{\mathop{\lim }}\,\int_{0}^{1-\delta }{\frac{{{u}^{\frac{3}{2}}}}{\sqrt{\left( 1-u \right)\left( 1-ku \right)}}}du$ and $\underset{k\to 1}{\mathop{\lim }}\,\int_{0}^{1-\delta }{\frac{\sqrt{u}}{\sqrt{\left( 1-u \right)\left( 1-ku \right)}}}du$  are finite values. From the expressions of $G(u)$ and $F(u)$, we can see $\underset{k\to 1}{\mathop{\lim }}\,\int_{1-\delta }^{1}{\frac{{{u}^{\frac{3}{2}}}}{\sqrt{\left( 1-u \right)\left( 1-ku \right)}}}du$ and $\underset{k\to 1}{\mathop{\lim }}\,\int_{1-\delta }^{1}{\frac{\sqrt{u}}{\sqrt{\left( 1-u \right)\left( 1-ku \right)}}}du$ both diverge only when $u\to 1^{-}$, and they diverge to the same degree.

In total, $\underset{k\to {{1}^{-}}}{\mathop{\lim }}\,h(k)=1$.

Since $\underset{m\to \infty }{\mathop{\lim }}\,{{q}_{m,\sigma }}(0)=0$, and from the formula (\ref{the formula of time}) of $\sigma$, we infer $k\to 1\left( m\to \infty  \right)$. So from the definition of $f_\sigma$, we get $\underset{m\to \infty }{\mathop{\lim }}\,{{f}_{\sigma }}\left( m \right)=\underset{m\to \infty }{\mathop{\lim }}\,\sqrt{2k}\cdot h\left( k \right)=\sqrt{2}$.

In the end, we prove this proposition.
\end{proof}
%We leave the details of the proof of this proposition in Appendix. 

\begin{lemma}\label{Lemma 4.1}
	For any pair $\sigma_{1},\sigma_{2}>0$ there exists a unique solution $(q_{1}^{\sigma_{1}},q_{2}^{\sigma_{2}})\in {{V}_{\sigma_{1} }}\times {{V}_{\sigma_{2} }}$ of problem  (\ref{updates mean equation of mean interaction}).
\end{lemma}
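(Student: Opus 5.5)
The plan is to reduce the coupled problem (\ref{updates mean equation of mean interaction}) to a scalar fixed-point equation for the single constant $m:=\frac{1}{(\bar q_1+\bar q_2)^2}$. For each fixed $m\ge 0$ the two equations decouple into two copies of (\ref{yibanODE}), one with period $\sigma_1$ and one with period $\sigma_2$. By the preceding lemma, each has a unique solution $q_{m,\sigma_i}\in V_{\sigma_i}$. Hence $(q_1,q_2)\in V_{\sigma_1}\times V_{\sigma_2}$ solves (\ref{updates mean equation of mean interaction}) if and only if $q_i=q_{m,\sigma_i}$ for some $m>0$ satisfying
\begin{equation*}
m=\frac{1}{(\bar q_{m,\sigma_1}+\bar q_{m,\sigma_2})^2},
\end{equation*}
that is, $1=\sqrt m\,(\bar q_{m,\sigma_1}+\bar q_{m,\sigma_2})=f_{\sigma_1}(m)+f_{\sigma_2}(m)$. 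The value $m=0$ is excluded automatically, since the right-hand side is positive. Existence and uniqueness of the solution pair is therefore equivalent to existence and uniqueness of a root $m>0$ of $F(m):=f_{\sigma_1}(m)+f_{\sigma_2}(m)-1$.

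Uniqueness comes from the preceding proposition: each $f_{\sigma_i}$ is strictly increasing, so $F$ is strictly increasing and has at most one zero. Existence will follow from the intermediate value theorem once three facts are in place. First, $f_{\sigma}(0)=0$, because $\bar q_{0,\sigma}$ is finite (it is the pure Kepler free fall). Second, $\lim_{m\to\infty}F(m)=2\sqrt2>1$ by the proposition. Third, $F$ is continuous on $[0,\infty)$.

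The main obstacle is continuity of $m\mapsto f_\sigma(m)$, which the paper has not stated explicitly. I would prove it through the parametrization already used. Set $k=\frac{mq_0^2}{2}$; then $\sigma=\frac{q_0^{3/2}}{2}f(k)$ and $f_\sigma(m)=\sqrt{2k}\,h(k)$, with $f,g,h$ continuous on $[0,1)$ by dominated convergence. Eliminating $q_0$ gives
\begin{equation*}
m=\frac{2k}{q_0^2}=2k\Bigl(\frac{f(k)}{2\sigma}\Bigr)^{4/3},
\end{equation*}
so $m$ is a continuous function of $k$. It is strictly increasing, because $f'>0$ by the displayed formula for $f'(k)$, and it tends to $\infty$ as $k\to1^-$, because $f(k)\to\infty$ by the logarithmic divergence noted in the proof. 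Hence $k\mapsto m$ is a homeomorphism $[0,1)\to[0,\infty)$. Its inverse $m\mapsto k(m)$ is continuous, and therefore so is $f_\sigma(m)=\sqrt{2k(m)}\,h(k(m))$.

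This same argument also justifies the paper's claims that $k\to1$ as $m\to\infty$ and that $q_0(m)$ exists and is unique for each $m$. With continuity established, the intermediate value theorem gives a unique $m^*>0$ with $F(m^*)=0$, and $(q_{m^*,\sigma_1},q_{m^*,\sigma_2})$ is the unique solution claimed in Lemma~\ref{Lemma 4.1}.
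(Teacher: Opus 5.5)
Your proposal is correct and follows the paper's proof. Both reduce the problem to the scalar equation $f_{\sigma_1}(m)+f_{\sigma_2}(m)=1$, take uniqueness of $m$ from strict monotonicity of $f_\sigma$ and existence from $f_{\sigma_1}(0)+f_{\sigma_2}(0)=0$ together with the limit $2\sqrt2>1$, and then read the argument backwards to get uniqueness of the pair. Your homeomorphism $k\mapsto m=2k\bigl(f(k)/(2\sigma)\bigr)^{4/3}$ usefully supplies the continuity of $f_\sigma$, and the well-definedness of $q_{m,\sigma}$, which the paper's intermediate-value step uses without stating. One small slip: it is $f_{\sigma_1}+f_{\sigma_2}$, not $F$, whose limit is $2\sqrt2$.
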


\begin{proof}
	We consider the function $f_{\sigma_{1},\sigma_{2}}:=f_{\sigma_{1}}+f_{\sigma_{2}}:\left[0,\infty\right) \to \left[0,\infty\right)$, by the proposition above, this function is strictly monotonically increasing and satisfies $\underset{m\to \infty }{\mathop{\lim }}\,{f_{\sigma_{1},\sigma_{2}}}\left( m \right)=2\sqrt{2}$. Moreover, we have $f_{\sigma_{1},\sigma_{2}}(0)=0$. We know $f_{\sigma}$ is strictly increasing, hence there exists a unique $m:=m_{\sigma_{1},\sigma_{2}}\in \left[0,\infty\right)$ with the property that $f_{\sigma_{1},\sigma_{2}}(m)=1$.
	
	We claim that $(q_{1}^{\sigma_{1}},q_{2}^{\sigma_{2}}):=(q_{m,\sigma_{1}}, q_{m,\sigma_{2}})
	\in V_{\sigma_{1}}\times V_{\sigma_{2}}$ solves problem (\ref{updates mean equation of mean interaction}). For that purpose we compute $$\bar{q}_{1}^{\sigma_{1}}+\bar{q}_{2}^{\sigma_{2}}=\frac{f_{\sigma_{1}}(m)}{\sqrt{m}}+\frac{f_{\sigma_{2}}(m)}{\sqrt{m}}=\frac{f_{\sigma_{1},\sigma_{2}}(m)}{\sqrt{m}}=\frac{1}{\sqrt{m}},$$ so that $\frac{1}{ \left(\bar{q}_{1}^{\sigma_{1}}+\bar{q}_{2}^{\sigma_{2}}\right)^2}=m$. Hence for $i\in {1,2}$, we obtain that for $t\in \left[0,\sigma_{i}\right)$, we have $${\ddot{q}_{m,\sigma_{i}}}(t)=-\frac{2}{({q_{i}^{\sigma_{i}}})^2}+m=-\frac{2}{({q_{i}^{\sigma_{i}}})^2}+\frac{1}{\left(\bar{q}_{1}^{\sigma_{1}}+\bar{q}_{2}^{\sigma_{2}}\right)^2}.$$
	
	Consequently, $(q_{1}^{\sigma_{1}},q_{2}^{\sigma_{2}})$ solves problem (\ref{updates mean equation of mean interaction}), so we get the existence. And by reading the argument backwards, we obtain from the uniqueness of $m=m_{\sigma_{1},\sigma_{2}}$ that this solution is unique as well.
\end{proof}

In the end, we can provide a complete proof of Theorem \ref{Main2}. 
%这里加上过渡，\sigma的比值为什么是有理数，为什么是正的，然后说明只考虑最小周期，也就是两个周期互质，通过前面z的构造可以知道z changes its sign at zero，所以1个单位时间内碰撞奇数次就有z（t+1)=-z(t)，碰撞偶数次就有z（t+1)=z(t)

%\begin{thm}\label{Theorem 4.2}
%	There is a one-to-one correspondence between $\mathbb{Q}_{+}$ and $\mathscr{C}_{\mathscr{B}_{av}}/{\mathbb{Z}}/{2\mathbb{Z}}\;\times {\mathbb{Z}}/{2\mathbb{Z}}\;$, in the sense that $z_i=-z_i(i=1,2)$ under the  ${\mathbb{Z}}/{2\mathbb{Z}}\;$ action.
%\end{thm}
%periodic solutions of problem  (\ref{uncoupled ODE}) under 
  \begin{proof}[Proof of Theorem \ref{Main2}]
  From (\ref{the formula of time}), we have $$\frac{{{\sigma }_{1}}}{{{\sigma }_{2}}}=\frac{2q_{1,0}^{\frac{3}{2}}\cdot f\left( {{k}_{1}} \right)}{2q_{2,0}^{\frac{3}{2}}\cdot f\left( {{k}_{2}} \right)}={{r}^{\frac{3}{2}}}\frac{f\left( {{k}_{1}} \right)}{f\left( {{k}_{2}} \right)}={{r}^{\frac{3}{2}}}\frac{f\left( {{r}^{2}}{{k}_{2}} \right)}{f\left( {{k}_{2}} \right)},$$ where $r=\frac{q_{1,0}^{\frac{3}{2}}}{q_{2,0}^{\frac{3}{2}}}\in \left(0,+\infty\right)$ and ${{k}_{1}}=\frac{1}{2}mq_{1,0}^{2}={{r}^{2}}\cdot \frac{1}{2}mq_{2,0}^{2}={{r}^{2}}{{k}_{2}}$, $q_{1,0}$ and $q_{2,0}$ are the initial position of $q_1$ and $q_2$ separately.
  
  Then we can compute $m$, using ${{q}_{1,0}}=r\cdot {{q}_{2,0}}$ and ${{k}_{1}}={{r}^{2}}{{k}_{2}}$, we have
  \begin{equation}
  	\begin{aligned}
  		 m&=\frac{1}{{{\left( {{{\bar{q}}}_{1}}+{{{\bar{q}}}_{2}} \right)}^{2}}} \\ 
  		&=\frac{1}{{{\left( {{q}_{1,0}}\cdot h\left( {{k}_{1}} \right)+{{q}_{2,0}}\cdot h\left( {{k}_{2}} \right) \right)}^{2}}} \\ 
  		&=\frac{1}{q_{2,0}^{2}{{\left( r\cdot h\left( {{r}^{2}}{{k}_{2}} \right)+h\left( {{k}_{2}} \right) \right)}^{2}}}. \\ 
  	\end{aligned}
  \end{equation}

  On the other hand, we know $m=\frac{2{{k}_{2}}}{q_{2,0}^{2}}$, so we have the quality formula $\frac{{{k}_{2}}}{q_{2,0}^{2}}=\frac{1}{2q_{2,0}^{2}{{\left( r\cdot h\left( {{r}^{2}}{{k}_{2}} \right)+h\left( {{k}_{2}} \right) \right)}^{2}}}$. Finally, we have this equation 
  \begin{equation}\label{original ziqia equation}
  	{{k}_{2}}=\frac{1}{2{{\left( r\cdot h\left( {{r}^{2}}{{k}_{2}} \right)+h\left( {{k}_{2}} \right) \right)}^{2}}}.
  \end{equation}
  
  In general, denote this equation by
  \begin{equation}\label{ziqia equation}
  	K\left( r,k \right)=k \left( r,k \right)=k-\frac{1}{2{{\left( rh\left( {{r}^{2}}k \right)+h\left( k \right) \right)}^{2}}}=0.
  \end{equation}
  
  Here we need ${{r}^{2}}k<1$ to make sure $h({r^2}{k})$ makes sense, then $k<\frac{1}{{{r}^{2}}}\Rightarrow k\in \left[0,k_{max}\right)$ and $r\in \left(0, +\infty\right)$. Where ${{k}_{\max }}=\min \left\{ 1,\frac{1}{{{r}^{2}}} \right\}=\left\{ \begin{aligned}
  	& 1,\text{     }\text{ }\text{ }r\le 1 \\ 
  	& \frac{1}{{{r}^{2}}},\text{ }r>1 \\ 
  \end{aligned} \right.$.
  
  \begin{lemma}
  	There exists a unique solution  $k=\kappa \left( r \right)$ solves the equation (\ref{ziqia equation}).
  \end{lemma}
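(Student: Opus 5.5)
For fixed $r\in(0,+\infty)$ I would regard $K(r,\cdot)$ as a continuous function on $[0,k_{\max})$ and argue by the intermediate value theorem together with strict monotonicity. Existence and uniqueness will both come from the monotonicity of $h$ proved above, namely $h'(k)>0$ on $[0,1)$, obtained from the positivity of $g'(k)f(k)-g(k)f'(k)$.

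\textbf{Step 1 (monotonicity).} Since $h$ is strictly increasing on $[0,1)$ and positive, the map
\[
k\mapsto \Psi_r(k):=r\,h(r^2k)+h(k)
\]
is strictly increasing and positive on $[0,k_{\max})$. Indeed, both $k$ and $r^2k$ stay in $[0,1)$ by the choice of $k_{\max}$. Hence $k\mapsto \frac{1}{2\Psi_r(k)^2}$ is strictly decreasing, and $K(r,k)=k-\frac{1}{2\Psi_r(k)^2}$ is strictly increasing in $k$. This gives at most one zero.

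\textbf{Step 2 (sign at the left endpoint).} At $k=0$ we have $K(r,0)=-\frac{1}{2(r+1)^2h(0)^2}<0$. Here $h(0)=g(0)/f(0)$ is an explicit ratio of Beta integrals: $f(0)=B(\tfrac32,\tfrac12)=\pi/2$ and $g(0)=B(\tfrac52,\tfrac12)=3\pi/8$, so $h(0)=3/4$.

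\textbf{Step 3 (sign at the right endpoint).} I would show $\lim_{k\to k_{\max}^-}K(r,k)>0$. The proof of the Proposition gives $\lim_{k\to1^-}h(k)=1$, and $h$ is increasing, so $h(k)\in[3/4,1)$ throughout. There are two cases.

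\emph{Case $r\le1$.} Here $k_{\max}=1$ and $\Psi_r(k)\to r\,h(r^2)+1$ when $r<1$, or $\to 2$ when $r=1$. The limit of $K$ is $1-\frac{1}{2(rh(r^2)+1)^2}\ge 1-\frac12>0$.

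\emph{Case $r>1$.} Here $k_{\max}=1/r^2$ and $\Psi_r(k)\to r\cdot 1+h(1/r^2)>r$. The limit is therefore at least $\frac1{r^2}-\frac{1}{2r^2}>0$.

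By continuity of $h$ (differentiation under the integral sign is already used for $f,g$) and the intermediate value theorem, a zero $\kappa(r)\in(0,k_{\max})$ exists, and Step 1 makes it unique.

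The main obstacle I expect is Step 3. It relies on the limit $h(k)\to1$ as $k\to1^-$, which the paper only justified heuristically ("diverge to the same degree"). If needed, I would make it rigorous by writing
\[
1-h(k)=\frac{\int_0^1\sqrt u\,(1-u)\,\big((1-u)(1-ku)\big)^{-1/2}du}{f(k)} .
\]
The numerator stays bounded as $k\to1$, while $f(k)\to\infty$ logarithmically. Actually only the cruder bound $h\ge 3/4$ together with $h<1$ is needed, since the estimates in Step 3 used only $\Psi_r>r$ or $\Psi_r>1$, so this point is not delicate.
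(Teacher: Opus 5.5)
Your proof is correct and follows the paper's argument: strict monotonicity of $K_r$ from $h'>0$, the sign $K_r(0)<0$ with $h(0)=\frac{3}{4}$, and a positive limit at $k_{\max}$ in the two cases $r\le 1$ and $r>1$. You also fix the paper's slips (it writes $h(2/r^2)$ and $h(2r^2)$ where $h(1/r^2)$ and $h(r^2)$ are meant), and you make $\lim_{k\to 1^-}h(k)=1$ rigorous via $1-h=(f-g)/f$.

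Only your closing aside is slightly off. The bound $h\ge\frac{3}{4}$ by itself gives $\Psi_r(k)\ge\frac{3}{4}(r+1)$, not $\Psi_r>r$ or $\Psi_r>1$. That weaker bound still suffices, because $k_{\max}-\frac{8}{9(r+1)^2}>0$ for every $r>0$.
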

   \begin{proof}
   	For every fixed $r$, ${{K}_{r}}\left( k \right)=K\left( r,k \right)$ is a function about $k\in \left[0,k_{max}\right)$. 
   	
   	Because for any $k\in (0,1)$, $h'(k)>0$, then we know the first derivative of $K_{r}(k)$ is $\frac{\partial {{K}_{r}}\left( k \right)}{\partial k}=1+\frac{\left( {{r}^{3}}{h}'\left( {{r}^{2}}k \right)+{h}'\left( k \right) \right)}{{{\left( rh\left( {{r}^{2}}k \right)+h\left( k \right) \right)}^{3}}}>0$. So ${{K}_{r}}\left( k \right)$ is strictly increasing in $\left[0,k_{max}\right)$.
   	
   	 About $\underset{k\to {{k}_{\max }}}{\mathop{\lim }}\,{{K}_{r}}\left( k \right)$, we discuss two cases.
   	
   	(1) When $r>1$, $\underset{k\to \frac{1}{{{r}^{2}}}}{\mathop{\lim }}\,{{K}_{r}}\left( k \right)=\frac{1}{{{r}^{2}}}-\frac{1}{2{{\left( r+h\left( \frac{2}{{{r}^{2}}} \right) \right)}^{2}}}=\frac{2{{\left( r+h\left( \frac{2}{{{r}^{2}}} \right) \right)}^{2}}-{{r}^{2}}}{2{{r}^{2}}{{\left( r+h\left( \frac{2}{{{r}^{2}}} \right) \right)}^{2}}}=\frac{{{r}^{2}}+2rh\left( \frac{2}{{{r}^{2}}} \right)+2h{{\left( \frac{2}{{{r}^{2}}} \right)}^{2}}}{2{{r}^{2}}{{\left( r+h\left( \frac{2}{{{r}^{2}}} \right) \right)}^{2}}}>0$;
%   	\begin{equation*}
%   		\begin{aligned}
%   			 \underset{k\to \frac{1}{{{r}^{2}}}}{\mathop{\lim }}\,{{K}_{r}}\left( k \right)&=\frac{1}{{{r}^{2}}}-\frac{1}{2{{\left( r+h\left( \frac{2}{{{r}^{2}}} \right) \right)}^{2}}} \\ 
%   			& =\frac{2{{\left( r+h\left( \frac{2}{{{r}^{2}}} \right) \right)}^{2}}-{{r}^{2}}}{2{{r}^{2}}{{\left( r+h\left( \frac{2}{{{r}^{2}}} \right) \right)}^{2}}} \\ 
%   			& =\frac{{{r}^{2}}+2rh\left( \frac{2}{{{r}^{2}}} \right)+2h{{\left( \frac{2}{{{r}^{2}}} \right)}^{2}}}{2{{r}^{2}}{{\left( r+h\left( \frac{2}{{{r}^{2}}} \right) \right)}^{2}}}>0;\\ 
%   		\end{aligned}
%   	\end{equation*}
   	
   	(2) When $r\le 1$, $\underset{k\to {{1}^{-}}}{\mathop{\lim }}\,{{K}_{r}}\left( k \right)=1-\frac{1}{2{{\left( rh\left( 2{{r}^{2}} \right)+\underset{k\to {{1}^{-}}}{\mathop{\lim }}\,h(k) \right)}^{2}}}=1-\frac{1}{2{{\left( rh\left( 2{{r}^{2}} \right)+1 \right)}^{2}}}\in \left( 0,\frac{1}{2} \right)$.
   	
   	Here, we have already proved $\underset{k\to {{1}^{-}}}{\mathop{\lim }}\,h(k)=1$ before.
    
   	On the other hand, ${{K}_{r}}\left( 0 \right)=-\frac{1}{2{{\left( r+1 \right)}^{2}}{{\left( h\left( 0 \right) \right)}^{2}}}<0$.
   	
   	In summary, we know that there exist a unique solution  $k=\kappa \left( r \right)$ solves the equation (\ref{ziqia equation}) in the domain $k\in \left[0,k_{max}\right)$. 
   \end{proof}

   \begin{remark}
   	We can rewrite the equation (\ref{original ziqia equation}) as  \begin{equation}\label{updates ziqia equation}
   		{{k}_{2}}=\frac{1}{{{\left( r\cdot h\left( {{r}^{2}}{\kappa(r)} \right)+h\left( {\kappa(r)} \right) \right)}^{2}}}.
   	\end{equation}
   \end{remark}

   Now we get the new expression $$\frac{{{\sigma }_{1}}}{{{\sigma }_{2}}}={{r}^{\frac{3}{2}}}\frac{f\left( {{r}^{2}}\cdot \kappa \left( r \right) \right)}{f\left( \kappa \left( r \right) \right)}.$$
  
  We denote $\Psi \left( r \right):={{r}^{\frac{3}{2}}}\frac{f\left( {{r}^{2}}\cdot \kappa \left( r \right) \right)}{f\left( \kappa \left( r \right) \right)}$, then we prove this function is a continuous, strictly monotonically increasing bijection on $\mathbb{R}_{+}$.
  
  \begin{lemma}
  	The function $\Psi \left( r \right)={{r}^{\frac{3}{2}}}\frac{f\left( {{r}^{2}}\cdot \kappa \left( r \right) \right)}{f\left( \kappa \left( r \right) \right)}:\mathbb{R}_{+}\to \mathbb{R}_{+}$ is a continuous and strictly monotonically increasing bijection.
  \end{lemma}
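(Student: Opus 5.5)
The plan is to avoid differentiating $\Psi$ directly. Instead I would combine continuity with injectivity and surjectivity, both obtained from the existence and uniqueness statement of Lemma \ref{Lemma 4.1}, and then fix the direction of monotonicity from the behaviour at $r\to 0$. The tool connecting the two is the scaling symmetry of (\ref{updates mean equation of mean interaction}). If $(q_1,q_2)$ solves it on $[0,\sigma_1)\times[0,\sigma_2)$ with constant $m=(\bar q_1+\bar q_2)^{-2}$, then for $\lambda>0$ the pair $Q_i(t):=\lambda^2 q_i(t/\lambda^3)$ has these properties:
\begin{itemize}
\item $\ddot Q_i=-2/Q_i^2+\lambda^{-4}m$;
\item $\bar Q_i=\lambda^2\bar q_i$, so that $(\bar Q_1+\bar Q_2)^{-2}=\lambda^{-4}m$;
\item $Q_i\in V_{\lambda^3\sigma_i}$.
\end{itemize}
Hence $(Q_1,Q_2)$ solves the problem with periods $(\lambda^3\sigma_1,\lambda^3\sigma_2)$. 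The ratios $q_{1,0}/q_{2,0}=r$ and $\sigma_1/\sigma_2$ are scale invariant, and so are $k_i=\tfrac12 m q_{i,0}^2$.

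\textbf{Continuity.} By the preceding lemma, $K_r(k)$ is strictly increasing in $k$ with $\partial_k K>0$. The function $h$ is $C^1$ on $[0,1)$, so $K$ is $C^1$ in $(r,k)$ on the open region $\{r>0,\ 0<k<k_{\max}(r)\}$. The implicit function theorem then shows that $\kappa$ is $C^1$, in particular continuous. I also need $\kappa(r)<k_{\max}(r)$ strictly, so that the arguments $r^2\kappa(r)$ and $\kappa(r)$ of $f$ stay in $[0,1)$; this follows from the sign computation in that lemma, since the limit of $K_r$ at $k_{\max}$ is positive. Because $f$ is continuous and positive on $[0,1)$, $\Psi$ is continuous.

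\textbf{Bijectivity.} For surjectivity, take $\rho>0$ and apply Lemma \ref{Lemma 4.1} with $(\sigma_1,\sigma_2)=(\rho,1)$. This gives a solution with some ratio $r$ of initial positions. The derivation leading to (\ref{original ziqia equation}) shows that $k_2$ solves (\ref{ziqia equation}), so by uniqueness $k_2=\kappa(r)$, and formula (\ref{the formula of time}) gives $\Psi(r)=\rho$. For injectivity, read the construction backwards. Given $r$, the value $k_2=\kappa(r)$ and any choice of $q_{2,0}>0$ determine
\[
m=\frac{2\kappa(r)}{q_{2,0}^2},\qquad q_{1,0}=r\,q_{2,0},
\]
and hence an actual solution in $V_{\sigma_1}\times V_{\sigma_2}$ with $\sigma_1/\sigma_2=\Psi(r)$, using the $q_{m,\sigma}$ of the first lemma of this section. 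The self-consistency $m=(\bar q_1+\bar q_2)^{-2}$ is exactly (\ref{ziqia equation}). Suppose $\Psi(r)=\Psi(r')$. Rescale both associated solutions by the scaling above so that they have the same $(\sigma_1,\sigma_2)$. By the uniqueness in Lemma \ref{Lemma 4.1} the two solutions coincide, so their initial-position ratios agree and $r=r'$.

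\textbf{Monotonicity.} A continuous injective map on the interval $(0,\infty)$ is strictly monotone. To see that it is increasing, let $r\to0$. Then $k_{\max}=1$ and $\kappa(r)$ stays in a compact subset of $(0,1)$: it tends to the root of $k=\tfrac12 h(k)^{-2}$, which lies in $(0,\tfrac12)$ because $h(k)\ge h(0)$ and $h\to1$. Also $f(r^2\kappa(r))\to f(0)$. Hence $\Psi(r)\sim r^{3/2}f(0)/f(\kappa)\to0$, so $\Psi$ cannot be decreasing. Together with surjectivity this gives a strictly increasing bijection $\mathbb{R}_+\to\mathbb{R}_+$.

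The step I expect to be delicate is the careful bookkeeping in the injectivity argument: checking that every admissible $r$ really produces a solution in $V_{\sigma_1}\times V_{\sigma_2}$. This needs the constraint $q_{i,0}\le\sqrt{2/m}$, i.e. $k_i<1$, which is where $\kappa(r)<k_{\max}(r)$ is used. It also needs the correspondence $r\leftrightarrow$ (solution up to scaling) to be genuinely one-to-one, which is where the scaling symmetry must be matched against the uniqueness of $m_{\sigma_1,\sigma_2}$.
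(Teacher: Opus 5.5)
Your argument is correct, and it takes a genuinely different route from the paper.

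\textbf{The paper's route.} The paper works with the derivative. It writes $\Psi'/\Psi=\frac{3}{2r}+(r^2A-B)\kappa'(r)+2rA\kappa(r)$ and uses $\kappa'<0$ from the implicit function theorem to dispose of the case $r^2A-B\le 0$. The case $r^2A-B>0$ is handled only through the limits of $\Psi'/\Psi$ as $r\to 0$ and $r\to+\infty$. The range is obtained from the asymptotics $\kappa(r)\to k_0$ and $r^2\kappa(r)\to k_0$.

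\textbf{Your route.} You never look at $\Psi'$. Using the scaling $Q_i(t)=\lambda^2 q_i(t/\lambda^3)$, you identify $\Psi$ with the map sending a solution, modulo scaling, to its period ratio $\sigma_1/\sigma_2$. Surjectivity and injectivity of $\Psi$ then become exactly the existence and uniqueness statements of Lemma~\ref{Lemma 4.1}. Strict monotonicity follows because $\Psi$ is a continuous injection on an interval.

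\textbf{What each route buys.} Your route buys rigor and economy. The paper's second case is not actually closed: positive limits at the two ends together with continuity do not force $\Psi'/\Psi>0$ in between. Moreover, the paper's expression $\frac{3}{2r}+\dots$ tends to $0$ as $r\to+\infty$, not to a positive number. You need neither sign information on $\Psi'$ nor any asymptotics at $+\infty$. The derivative route, if completed, would give the stronger statement $\Psi'>0$, i.e.\ that $\Psi$ is a diffeomorphism. Your argument does not give this, but the lemma does not claim it.

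\textbf{One harmless slip.} Since $u^{3/2}<u^{1/2}$ on $(0,1)$, one has $h<1$, and also $h\ge h(0)=\frac34$. Hence the root of $k=\frac{1}{2h(k)^2}$ lies in $(\frac12,\frac89)$, not in $(0,\frac12)$. In fact you do not need this limit at all. Since $f$ is increasing and $0<\kappa(r)<1$, you get $\Psi(r)\le r^{3/2}f(r^2)/f(0)\to 0$ for $r<1$, which is all the direction argument requires.
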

  \begin{proof}
  (1)Continuity
  
  From the definition of $\Psi \left( r \right)$, the continuity of $f$, the continuity of $\kappa(r)$ and the continuity of $r^{\frac{3}{2}}$, we see $\Psi \left( r \right)$ is continuous in $(0,+\infty)$.
  
  (2)Monotonicity
  
  Take the logarithm of both sides of this function $\Psi \left( r \right)$, we get
  
  $$\ln \Psi \left( r \right)=\frac{3}{2}\ln r+\ln f\left( {{r}^{2}}\kappa \left( r \right) \right)-\ln f\left( \kappa \left( r \right) \right).$$
  
  Derivative both sides of this equation, we have
  
  $$\frac{d\ln \Psi \left( r \right)}{dr}=\frac{3}{2r}+\frac{{f}'\left( {{r}^{2}}\kappa \left( r \right) \right)\left( 2r\kappa \left( r \right)+{{r}^{2}}{\kappa }'\left( r \right) \right)}{f\left( {{r}^{2}}\kappa \left( r \right) \right)}-\frac{{f}'\left( \kappa \left( r \right) \right){\kappa }'\left( r \right)}{f\left( \kappa \left( r \right) \right)}.$$
  
  Simplifying this, we can obtain
  
  \begin{equation*}
  	\begin{aligned}
  		\frac{{\Psi }'\left( r \right)}{\Psi \left( r \right)}& =\frac{3}{2r}+\frac{{f}'\left( {{r}^{2}}\kappa \left( r \right) \right)\left( 2r\kappa \left( r \right)+{{r}^{2}}{\kappa }'\left( r \right) \right)}{f\left( {{r}^{2}}\kappa \left( r \right) \right)}-\frac{{f}'\left( \kappa \left( r \right) \right){\kappa }'\left( r \right)}{f\left( \kappa \left( r \right) \right)} \\ 
  		& =\frac{3}{2r}+\frac{{f}'\left( {{r}^{2}}\kappa \left( r \right) \right)}{f\left( {{r}^{2}}\kappa \left( r \right) \right)}\left( 2r\kappa \left( r \right)+{{r}^{2}}{\kappa }'\left( r \right) \right)-\frac{{f}'\left( \kappa \left( r \right) \right)}{f\left( \kappa \left( r \right) \right)}{\kappa }'\left( r \right) \\ 
  		& =\frac{3}{2r}+\left( {{r}^{2}}A-B \right){\kappa }'\left( r \right)+2rA\kappa \left( r \right), \\ 
  	\end{aligned}
  \end{equation*}
  where $A:=\frac{{f}'\left( {{r}^{2}}\kappa \left( r \right) \right)}{f\left( {{r}^{2}}\kappa \left( r \right) \right)}$ and $B:=\frac{{f}'\left( \kappa \left( r \right) \right)}{f\left( \kappa \left( r \right) \right)}$.
  
  By Implicit Function Theorem and $h'(k)>0$, ${\kappa }'\left( r \right)=-\frac{\frac{\partial \phi \left( r,k \right)}{\partial r}}{\frac{\partial \phi \left( r,k \right)}{\partial k}}=-\frac{h\left( {{r}^{2}}k \right)+2{{r}^{2}}k{h}'\left( {{r}^{2}}k \right)}{{h}'\left( k \right)+{{r}^{3}}{h}'\left( {{r}^{2}}k \right)}<0$.
%  \begin{equation*}
%  	\begin{aligned}
%  		 {\kappa }'\left( r \right)&=-\frac{\frac{\partial \phi \left( r,k \right)}{\partial r}}{\frac{\partial \phi \left( r,k \right)}{\partial k}} \\ 
%  		& =-\frac{h\left( {{r}^{2}}k \right)+2{{r}^{2}}k{h}'\left( {{r}^{2}}k \right)}{{h}'\left( k \right)+{{r}^{3}}{h}'\left( {{r}^{2}}k \right)}<0.\\ 
%  	\end{aligned}
%  \end{equation*}

If ${{r}^{2}}A-B\le 0$, then for any $r\in(0,+\infty)$, ${\Psi }'\left( r \right)>0$.

If ${{r}^{2}}A-B>0$, we need further discussions.

Consider $\varphi(k)=k(h(k)^2)$, $\varphi (0)=0$ and $\underset{k\to {{1}^{-}}}{\mathop{\lim }}\,\varphi(k)=1$, ${\varphi }'\left( k \right)={{\left( h\left( k \right) \right)}^{2}}+2kh\left( k \right){h}'\left( k \right)>0$. Then there exists a unique $k_0\in (0,1)$, such that $\varphi(k_0)=\frac{1}{2}$.

When $r\to 0$, the equation (\ref{updates ziqia equation}) becomes $\kappa(r)=\frac{1}{2h(\kappa(r))^2}$, so $\underset{r\to 0}{\mathop{\lim }}\,\kappa(r)=k_0$, $\underset{r\to 0}{\mathop{\lim }}\,A=\frac{f'(0)}{f(0)}$, $\underset{r\to 0}{\mathop{\lim }}\,B=\frac{f'(k_0)}{f(k_0)}$. Then we know $\underset{r\to 0}{\mathop{\lim }}\,\frac{{\Psi }'\left( r \right)}{\Psi \left( r \right)}>0$.

When $r\to +\infty$, $\kappa(r)\to 0$. Assume $\underset{r\to +\infty}{\mathop{\lim }}\, r^2{\kappa(r)}=L$, where $L\in \mathbb{R}$ and $L$ maybe $\infty$. Then  the equation (\ref{updates ziqia equation}) becomes $\frac{L}{r^2}=\frac{1}{2r^{2}h(L)^2}$ in the sense that $r\to +\infty$, where $\underset{r\to +\infty}{\mathop{\lim }}\,h(\kappa(r))=h(0)=\frac{3}{4}$ can be ignored. %by calculating the integral and changing the variable $u={{\sin }^{2}}\theta$.
%这个可以去掉 
Due to the monotonicity of $\varphi(k)$, we know $L=k_0\in (0,1)$ is a finite number. 

So $\kappa \left( r \right)\sim \frac{k_0}{{{r}^{2}}}\left( r\to +\infty  \right)$,  $\underset{r\to +\infty}{\mathop{\lim }}\,A=\frac{f'(k_0)}{f(k_0)}$, $\underset{r\to +\infty}{\mathop{\lim }}\,B=\frac{f'(0)}{f(0)}$, ${\kappa}'\left( r \right)\sim -\frac{2k_0}{{{r}^{3}}}\left( r\to +\infty  \right)$. Then $\frac{{\Psi }'\left( r \right)}{\Psi \left( r \right)}\sim \frac{3}{2r}+\frac{2k_{0}f'(0)}{r^{3}f(0)}\left( r\to +\infty  \right)$, $\underset{r\to +\infty}{\mathop{\lim }}\,\frac{{\Psi }'\left( r \right)}{\Psi \left( r \right)}>0$. From the expression we can see $\frac{{\Psi }'\left( r \right)}{\Psi \left( r \right)}$ is continuous in $(0,+\infty)$, so for all $r\in(0,+\infty)$, $\frac{{\Psi }'\left( r \right)}{\Psi \left( r \right)}>0$, then ${\Psi}' \left( r \right)>0(r\in(0,+\infty))$.

(3)Domain and Range
  
  From $\underset{r\to 0}{\mathop{\lim }}\,\kappa(r)=k_0$ and  $\underset{r\to +\infty}{\mathop{\lim }}\,\kappa(r)=\frac{k_0}{r^2}$, we get $\underset{r\to 0}{\mathop{\lim }}\,\Psi(r)=0$ and $\underset{r\to +\infty}{\mathop{\lim }}\,\Psi(r)=+\infty$. So the domain and the range of $\Psi(r)$ are both $(0,+\infty)$.
  
In summary, we know $\Psi(r)$ is a continuous, strictly monotonically increasing bijection on $\mathbb{R}_{+}$.
  \end{proof}
  
  	Because we only need to consider periodic orbits of period 1 in problem (\ref{uncoupled ODE}), we pick $\sigma_{i}=\frac{1}{2n_i}(i=1,2)$, where $n_i \in \mathbb{Z}_{>0}(i=1,2)$ and $gcd(n_1, n_2)=1$. 
  
    Now, we establish a construction to get periodic solutions of problem (\ref{uncoupled ODE}) from solutions of problem (\ref{updates mean equation of mean interaction}). 
   
   Let 
   
   $$\bar{q}_{i}^{{{\sigma }_{i}}}\left( t \right)=\left\{ \begin{aligned}
   	& q_{i}^{{{\sigma }_{i}}}\left( t \right)\text{,  }0\le t<{{\sigma }_{i}} \\ 
   	& q_{i}^{{{\sigma }_{i}}}\left( 2{{\sigma }_{i}}-t \right)\text{,  }{{\sigma }_{i}}\le t<2{{\sigma }_{i}}. \\ 
   \end{aligned} \right.$$
   
   And we continue this extension $\bar{q}_{i}^{{{\sigma }_{i}}}\left( t\text{+2}{{\sigma }_{i}} \right)\text{=}\bar{q}_{i}^{{{\sigma }_{i}}}\left( t \right)$, then we extend the solutions of problem (\ref{updates mean equation of mean interaction}) to problem (\ref{uncoupled ODE}) periodically and uniquely.
   
   On the one hand, $\frac{\sigma_1}{\sigma_2}=\frac{n_2}{n_1}$ is a positive rational number. And since $\Psi(r):\mathbb{R}_{+}\to \mathbb{R}_{+}$ is a bijection, for any positive rational numbers, there exists a unique $r\in \mathbb{Q}_{+}$ decides $\frac{\sigma_{1}}{\sigma_{2}}$. Thereby we find the one-to-one correspondence between $\mathbb{Q}_{+}$ and  $\frac{\sigma_1}{\sigma_2}$. 
   
   On the other hand，Lemma \ref{Lemma 4.1} shows that $\frac{\sigma_1}{\sigma_2}$ gives a unique $(q_{1}^{\sigma_{1}}, q_{2}^{\sigma_{2}})$, so $\frac{\sigma_1}{\sigma_2}$ and $(\bar{q}_{1}^{{{\sigma }_{i}}},\bar{q}_{2}^{{{\sigma }_{2}}})$ are in one-to-one correspondence. According to Theorem \ref{Main1}, we can get $(\bar{q}_{1}^{{{\sigma }_{i}}},\bar{q}_{2}^{{{\sigma }_{2}}})$ and $\mathscr{C}_{\mathscr{B}_{av}}$ are in one-to-one correspondence under the  ${\mathbb{Z}}/{2\mathbb{Z}}\;\times {\mathbb{Z}}/{2\mathbb{Z}}\;$ action, where $z_i=-z_i(i=1,2)$ under the action of ${\mathbb{Z}}/{2\mathbb{Z}}\;$. 
   %On the other hand, by the construction of $z_i(i=1,2)$, we know $z_i$ change its signs at each zero, and from Theorem \ref{Main1}, $(\bar{q}_{1}^{{{\sigma }_{i}}},\bar{q}_{i}^{{{\sigma }_{2}}})$ and $(z_1,z_2)$ are in one-to-one correspondence. According to Lemma \ref{Lemma 4.1},  $\frac{\sigma_1}{\sigma_2}$ gives a unique $(q_{1}^{\sigma_{1}}, q_{2}^{\sigma_{2}})$, so $\frac{\sigma_1}{\sigma_2}$ and $(\bar{q}_{1}^{{{\sigma }_{i}}},\bar{q}_{i}^{{{\sigma }_{2}}})$ are in one-to-one correspondence.
   %这一段有问题，结论还是得在模z/2z\times z/2z意义下成立。前面的$z_i$ change its signs at each zero是由于S^1本身的拓扑结构和z是周期轨决定的，但是不能说明对特定的q我们就能决定z的符号了，还是有两个z对应的，所以还是得模去群作用。啊！我又要反思品味问题了，么得一点数学品味。
   
   Therefore, we finally get the one-to-one correspondence between the set of positive rational numbers $\mathbb{Q}_{+}$ and $\mathscr{C}_{\mathscr{B}_{av}}/ {\mathbb{Z}}/{2\mathbb{Z}}\;\times {\mathbb{Z}}/{2\mathbb{Z}}\;$.

 \end{proof}

  \section*{Acknowledgments} 
  First of all, I would like to express my sincere gratitude to Professor Urs Frauenfelde, who inspired the idea for this article. I am very grateful for his meticulous guidance and discussions, and for his inspiring work in this field. I also deeply appreciate Professor Shanzhong Sun's generous help and support, as well as the time he devoted to discussing and verifying the details with me for multiple times. Finally, my heartfelt thanks go to Professor Lei Zhao, for attending my presentation and offering constructive suggestions. Partially supported by NSFC(No.12171327).
  %  \subsection*{4.Instantaneous Interaction}
   % The instantaneous interaction motion of equations is
    % \begin{equation}
    %	\begin{aligned}
    %		&\ddot{q_1}=\frac{2}{\left(q_1\right)^2}-\frac{1}{ \left(\bar{q}_{2}-\bar{q}_{1}\right)^2},\\
    %		&\ddot{q_2}=-\frac{2}{\left(q_2\right)^2}+\frac{1}{ \left(\bar{q}_{2}-\bar{q}_{1}\right)^2},
    %	\end{aligned}
    %\end{equation}
    %where $q_1<0<q_2$.
    
%	Basically, the two electrons act similiarly in this configuration(i.e. $\left| q_1\right|=q_2$), so we can just consider one of them, the motion of the other is exactly the same.
	
%	\subsection*{2.Variational approach}
%	1.Use the basic konwledge of celestial mechanics, as un analoge of n-body problem, such as regularization.
	
%	2.Establish action functional and find the critical point(i.e the period orbits), construct loop space and use floer homology.
	
%	3.Classify the periodic orbits.
	
%	\subsection*{3.Quantum chaos}
%	Semicalssical analysis and quantization.
	
%	\subsection*{4.Miscellanea}
%	Compare the relationship between Helium and hygedron aotms. Some open problems or some unsolved difficulties or some interesting examples and other things. 
	
%	插入参考文献这样%\cite{Kac1966CanOH}
	\clearpage
	%\phantomsection
	\addcontentsline{toc}{section}{Reference}
	\tolerance=500
	\bibliographystyle{plain}
	\bibliography{referenceh}

@article{wintgen1992semiclassical,
	title={The semiclassical {H}elium atom},
	author={Wintgen, Dieter and Richter, Klaus and Tanner, Gregor},
	journal={Chaos: An Interdisciplinary Journal of Nonlinear Science},
	volume={2},
	number={1},
	pages={19--33},
	year={1992},
	publisher={American Institute of Physics}
}

@article{barutello2021regularized,
	title={Regularized variational principles for the perturbed {K}epler problem},
	author={Barutello, Vivina and Ortega, Rafael and Verzini, Gianmaria},
	journal={Advances in Mathematics},
	volume={383},
	pages={107694},
	year={2021},
	publisher={Elsevier}
}

@article{cieliebak2022variational,
	title={A variational approach to frozen planet orbits in {H}elium},
	author={Cieliebak, Kai and Frauenfelder, Urs and Volkov, Evgeny},
	journal={Annales de l'Institut Henri Poincar{\'e} C},
	volume={40},
	number={2},
	pages={379--455},
	year={2022}
}

@article{tanner2000theory,
	title={The theory of two-electron atoms: between ground state and complete fragmentation},
	author={Tanner, Gregor and Richter, Klaus and Rost, Jan-Michael},
	journal={Reviews of Modern Physics},
	volume={72},
	number={2},
	pages={497},
	year={2000},
	publisher={APS}
}

@article{baranzini2025frozen,
	title={Frozen Planet Orbits for the n-Electron Atom: S. Baranzini, GM Canneori and S. Terracini},
	author={Baranzini, Stefano and Canneori, Gian Marco and Terracini, Susanna},
	journal={Communications in Mathematical Physics},
	volume={406},
	number={12},
	pages={290},
	year={2025},
	publisher={Springer}
}

@article{zhao2023shooting,
	title={Shooting for collinear periodic orbits in the {H}elium model},
	author={Zhao, Lei},
	journal={Zeitschrift f{\"u}r Angewandte Mathematik und Physik},
	volume={74},
	number={6},
	pages={227},
	year={2023},
	publisher={Springer}
}

@article{frauenfelder2023compactness,
	title={A compactness theorem for frozen planets},
	author={Frauenfelder, Urs},
	journal={Journal of Topology and Analysis},
	volume={15},
	number={02},
	pages={527--543},
	year={2023},
	publisher={World Scientific}
}

@article{cieliebak2023nondegeneracy,
	title={Nondegeneracy and integral count of frozen planet orbits in {H}elium},
	author={Cieliebak, Kai and Frauenfelder, Urs and Volkov, Evgeny},
	journal={Tunisian Journal of Mathematics},
	volume={5},
	number={4},
	pages={713--770},
	year={2023},
	publisher={Mathematical Sciences Publishers}
}

@article{frauenfelder2021helium,
	title={Helium and {H}amiltonian delay equations},
	author={Frauenfelder, Urs},
	journal={Israel Journal of Mathematics},
	volume={246},
	number={1},
	pages={239--260},
	year={2021},
	publisher={Springer}
}

@article{ortega2011linear,
	title={Linear motions in a periodically forced {K}epler problem},
	author={Ortega, Rafael},
	journal={Portugaliae Mathematica},
	volume={68},
	number={2},
	pages={149--176},
	year={2011}
}

@inproceedings{bahri1991periodic,
	title={Periodic solutions of {H}amiltonian systems of 3-body type},
	author={Bahri, Abbas and Rabinowitz, Paul Henry},
	booktitle={Annales de l'Institut Henri Poincar{\'e} C, Analyse Non Lin{\'e}aire},
	volume={8},
	number={6},
	pages={561--649},
	year={1991},
	organization={Elsevier}
}

@article{ambrosetti1990closed,
	title={Closed orbits of fixed energy for singular {H}amiltonian systems},
	author={Ambrosetti, Antonio and Zelati, Vittorio Coti},
	journal={Archive for Rational Mechanics and Analysis},
	volume={112},
	number={4},
	pages={339--362},
	year={1990},
	publisher={Springer}
}

@article{tanaka1993prescribed,
title={A prescribed energy problem for a singular {H}amiltonian system with a weak force},
author={Tanaka, Kazunaga},
journal={Journal of Functional Analysis},
volume={113},
number={2},
pages={351--390},
year={1993},
publisher={Elsevier}
}

@article{rabinowitz1994note,
	title={A note on periodic solutions of prescribed energy for singular {H}amiltonian systems},
	author={Rabinowitz, Paul Henry},
	journal={Journal of Computational and Applied Mathematics},
	volume={52},
	number={1-3},
	pages={147--154},
	year={1994},
	publisher={Elsevier}
}

@article{coti1994collision,
	title={Collision and non-collision solutions for a class of {K}eplerian-like dynamical systems},
	author={Coti Zelati, Vittorio and Serra, Enrico},
	journal={Annali di Matematica Pura ed Applicata},
	volume={166},
	number={1},
	pages={343--362},
	year={1994},
	publisher={Springer}
}

@article{rebelo2018periodic,
	title={PERIODIC LINEAR MOTIONS WITH MULTIPLE COLLISIONS IN A FORCED {K}EPLER TYPE PROBLEM.},
	author={Rebelo, Carlota and Sim{\~o}es, Alexandre},
	journal={Discrete and Continuous Dynamical Systems: Series A},
	volume={38},
	number={8},
	year={2018}
}

@article{zhao2016some,
	title={Some collision solutions of the rectilinear periodically forced {K}epler problem},
	author={Zhao, Lei},
	journal={Advanced Nonlinear Studies},
	volume={16},
	number={1},
	pages={45--49},
	year={2016},
	publisher={De Gruyter}
}

@article{barutello2008singularities,
	title={On the singularities of generalized solutions to n-body-type problems},
	author={Barutello, Vivina and Ferrario, Davide L and Terracini, Susanna},
	journal={International Mathematics Research Notices},
	volume={2008},
	number={9},
	pages={rnn069--rnn069},
	year={2008},
	publisher={OUP}
}
	
\end{document}